\def\myarabic#1{\normalfont(\roman{#1})}
\newlist{theoremlist}{enumerate}{1}
\setlist[theoremlist]{label=\myarabic{theoremlisti},ref={\myarabic{theoremlisti}},itemindent=0pt,labelindent=0pt,
	leftmargin=*,noitemsep}
\renewcommand{\p@theoremlisti}{\perh@ps{\thetheorem}}
\protected\def\perh@ps#1#2{\textup{#1#2}}
\newcommand{\itemrefperh@ps}[2]{\textup{#2}}
\newcommand{\itemref}[1]{\begingroup\let\perh@ps\itemrefperh@ps\ref{#1}\endgroup}
\newtheorem{theorem}{Theorem}[section]
\newtheorem{lemma}[theorem]{Lemma}
\newtheorem{proposition}[theorem]{Proposition}
\newtheorem{corollary}[theorem]{Corollary}
\theoremstyle{definition}
\newtheorem{remark}[theorem]{Remark}
\theoremstyle{definition}
\newtheorem{definition}[theorem]{Definition}
\theoremstyle{definition}
\newtheorem{problem}[theorem]{Problem}
\theoremstyle{definition}
\newtheorem{example}[theorem]{Example}
\newcommand{\bp}{\begin{problem}}
	\newcommand{\ep}{\end{problem}}
\newcommand{\bs}{\begin{proof}[Solution]}
	\newcommand{\es}{\end{proof}}
\newcommand{\ra}{\rightarrow}
\newcommand{\Z}{\mathbb{Z}}
\newcommand{\C}{\mathbb{C}}
\newcommand{\R}{\mathbb{R}}
\newcommand{\bv}{\mathrm{b}}
\newcommand{\wv}{\mathrm{w}}
\newcommand{\pl}{\operatorname{Pl}}
\newcommand{\OG}{\operatorname{OG}(n+1,2n)}
\newcommand{\OGdec}{\widetilde{\operatorname{OG}}(n+1,2n)}
\newcommand{\OGpos}{\widetilde{\operatorname{OG}}_{>0}(n+1,2n)}
\newcommand{\IG}{\operatorname{IG}^\Omega(n+1,2n)}
\newcommand{\IGpos}{\operatorname{IG}^\Omega_{>0}(n+1,2n)}
\newcommand{\Grkn}{\operatorname{Gr}(k,n)}
\newcommand{\Grknpos}{\operatorname{Gr}_{>0}(k,n)}
\newcommand{\Grkndec}{\widetilde{\operatorname{Gr}}(k,n)}
\newcommand{\Grkndecpos}{\widetilde{\operatorname{Gr}}_{>0}(k,n)}
\newcommand{\Gr}{\operatorname{Gr}(n+1,2n)}
\newcommand{\Grpos}{\operatorname{Gr}_{>0}(n+1,2n)}
\newcommand{\disk}{\mathbb{D}}
\newcommand{\wt}{\mathrm{wt}}
\newcommand{\Rpos}{\mathbb{R}_{>0}}
\DeclareMathOperator{\Span}{\mathrm{span}}
\DeclareMathOperator{\pf}{\mathrm{pf}}
\DeclareMathOperator{\Spin}{\mathrm{Spin}}
\DeclareMathOperator{\SO}{\mathrm{SO}}
\DeclareMathOperator{\Pin}{\mathrm{Pin}}
\DeclareMathOperator{\Meas}{\mathrm{Meas}}
\newcommand{\Cl}{\mathrm{Cl}}
\newcommand{\cl}{\mathfrak{cl}}
\newcommand{\vect}{x}
\newcommand{\extp}{\@ifnextchar^\@extp{\@extp^{\,}}}
\def\@extp^#1{\mathop{\bigwedge\nolimits^{\!#1}}}
\newcommand\restr[2]{{
		\left.\kern-\nulldelimiterspace 
		#1 
		\vphantom{\big|} 
		\right|_{#2} 
}}
\crefname{figure}{Figure}{Figures}
\def\figref#1(#2){Figure~\hyperref[#1]{\ref*{#1}(#2)}}
\definecolor{calpolypomonagreen}{rgb}{0, 0.6, 0.2}
\newcounter{todofigure}
\tikzset{qvert/.style={draw,black,circle,fill=gray,minimum size=5pt,inner sep=0pt}  } 
\tikzset{bvert/.style={draw,circle,fill=black,minimum size=5pt,inner sep=0pt}  }  
\tikzset{gbvert/.style={draw, gray, circle,fill=gray,minimum size=5pt,inner sep=0pt}  } 
\tikzset{gvert/.style={draw,gray,circle,fill=white,minimum size=5pt,inner sep=0pt}  } 
\tikzset{wvert/.style={draw,circle,fill=white,minimum size=5pt,inner sep=0pt}  } 
\tikzset{fvert/.style={text=MidnightBlue}  } 
\tikzset{sqvert/.style={draw,black,rectangle,fill=black,minimum size=5pt,inner sep=0pt}  } 
\tikzset{lvert/.style={draw,circle,fill=black,minimum size=4pt,inner sep=0pt}  }  
\tikzset{mid arrow/.style={postaction={decorate,decoration={
				markings,
				mark=at position .5 with {\arrow{latex}}
	}}},
	mid rarrow/.style={postaction={decorate,decoration={
				markings,
				mark=at position .5 with {\arrow{latex reversed}}
	}}},
}
\tikzset{qvert/.style={draw,black,circle,fill=gray,minimum size=5pt,inner sep=0pt}  } 
\tikzset{bvert/.style={draw,circle,fill=black,minimum size=5pt,inner sep=0pt}  }  
\tikzset{redvert/.style={draw,circle,red,fill=red,minimum size=5pt,inner sep=0pt}  } 
\tikzset{wvert/.style={draw,circle,fill=white,minimum size=5pt,inner sep=0pt}  } \tikzset{bluevert/.style={draw,circle,blue,fill=blue,minimum size=5pt,inner sep=0pt}  }
\tikzset{fvert/.style={text=blue}  }
\definecolor{calpolypomonagreen}{rgb}{0, 0.6, 0.2}
\numberwithin{equation}{section}
\DeclareRobustCommand{\cev}[1]{%
	\mathpalette\do@cev{#1}%
}
\newcommand{\do@cev}[2]{%
	\fix@cev{#1}{+}%
	\reflectbox{$\m@th#1\vec{\reflectbox{$\fix@cev{#1}{-}\m@th#1#2\fix@cev{#1}{+}$}}$}%
	\fix@cev{#1}{-}%
}
\newcommand{\fix@cev}[2]{%
	\ifx#1\displaystyle
	\mkern#23mu
	\else
	\ifx#1\textstyle
	\mkern#23mu
	\else
	\ifx#1\scriptstyle
	\mkern#22mu
	\else
	\mkern#22mu
	\fi
	\fi
	\fi
}
\begin{document}
	\numberwithin{equation}{section}
	
	\title{{The twist for electrical networks and the inverse problem}}
	\author{Terrence George}
	\address{Department of Mathematics, University of Michigan, Ann Arbor, MI 48103, USA}
	\email{{\href{mailto:georgete@umich.edu}{georgete@umich.edu}}}
	\date{\today}
	
	\begin{abstract}
		We construct an electrical-network version of the twist map for the positive Grassmannian, and use it to solve the inverse problem of recovering conductances from the response matrix.  Each conductance is expressed as a biratio of Pfaffians as in the inverse map of Kenyon and Wilson; however, our Pfaffians are the more canonical $B$ variables instead of their tripod variables, and are coordinates on the positive orthogonal Grassmannian studied by Henriques and Speyer.
	\end{abstract}
	
	\maketitle

	\section{Introduction}

Let $\Gamma = (B \sqcup W,E,F)$ be a {planar} bipartite graph {embedded }in a disk $\disk$ with vertices $\{d_1,\dots,d_n\}$ on the boundary of $\disk$ (and with strand permutation $\pi_{k,n}$; see~\cref{section:bgraph}). Associated with $\Gamma$ is the space $\mathcal X_\Gamma$ of edge weights modulo gauge equivalence. Postnikov \cite{Post} constructed a parameterization of the totally positive Grassmannian $\Grknpos$ using a map $\Meas_\Gamma:\mathcal X_\Gamma \ra \Grknpos$ called \textit{boundary measurement}, where $k:=\#W -\#B$. There is another space $\mathcal A_\Gamma$ of functions $A: F(\Gamma) \ra \Rpos$. Scott \cite{Scott} constructed a function $\Phi_\Gamma:\Grknpos \ra \mathcal A_\Gamma/\Rpos$ assigning to each face of $\Gamma$ a certain Pl\"ucker coordinate. The spaces $\mathcal A_\Gamma$ and $\mathcal X_\Gamma$ are the (positive points of the) $\mathcal A$ and $\mathcal X$ cluster tori of Fock and Goncharov \cite{FockGon}, and there is a canonical map $p_\Gamma: \mathcal A_\Gamma \ra \mathcal X_\Gamma$ that assigns to an edge incident to faces $f,g$ the weight $\frac{1}{A_f A_g}$ (with some modification for boundary edges). Muller and Speyer, generalizing earlier work of {Berenstein, Fomin and Zelevinsky \cite{BFZ}}, and Marsh and Scott \cite{Marsh}, construct automorphisms $\vec \tau$ and $\cev \tau$ of $\Grknpos$, called \textit{right} and \textit{left twists}, that sit in the following commutative diagram {(where $\sim$ denotes homeomorphism)}:
\begin{equation} \nonumber
	\begin{tikzcd} \mathcal A_\Gamma/\Rpos   \arrow[r,"p_\Gamma","\sim"'] & \mathcal X_\Gamma \arrow[d,"\sim"',"\Meas_\Gamma"]\\ \Grknpos \arrow[u,"\sim"',"\Phi_\Gamma"]\arrow[r,bend right=10,"{\vec{\tau}}"',"\sim"]&\Grknpos\arrow[l,bend right=10,"\sim","{\cev{\tau}}"'] 
	\end{tikzcd}.
\end{equation}
{A key application of the twist is a formula for the inverse boundary measurement map; indeed, $\Meas_{\Gamma}^{-1}=p_\Gamma \circ \Phi_\Gamma \circ \cev \tau$. }

{The main goal of this paper is to generalize these results to electrical networks.} Let $G=(V,E)$ be a planar graph embedded in a disk $\disk$ with vertices $\{b_1,\dots,b_n\}$ on the boundary labeled in clockwise cyclic order. A function $c:E(G) \ra \Rpos$ is called a \textit{conductance}, and a pair $(G,c)$ is called an \textit{electrical network}. {Let $\mathcal R_G := \Rpos^{E(G)}$ denote the space of conductances on $G$.} In this paper, we focus on \textit{well connected} electrical networks ({a genericity condition} defined in~\cref{sec:redgr}). 

	The \textit{Laplacian} on $G$ is the linear operator
$
\Delta:\R^{V(G)} \ra \R^{V(G)}
$
defined by 
\[
(\Delta f)(v) := \sum_{e=uv} c(e) (f(v)-f(u))
\]
where the sum is over all edges $uv$ incident to $v$. A function $f: V(G) \ra \R$ is said to be \textit{harmonic} if $(\Delta f)(v)=0$ for all internal vertices $v$ of $G$. Given a function $g: \{b_1,\dots,b_n\} \ra \R$ on the boundary vertices, there is a unique extension of $g$ to a harmonic function $f_g$ on $V(G)$, called the \textit{harmonic extension} of $g$. The linear operator $L : \R^{\{b_1,\dots,b_n\}} \ra \R^{\{b_1,\dots,b_n\}}$ defined by $L(g) = \restr{(-\Delta f_g)}{\{b_1,\dots,b_n\}}$ is called the \textit{response matrix}. It is a negative semidefinite symmetric matrix whose rows and columns sum to $0$. The space of response matrices was characterized by Colin de Verdi\`ere \cite{cdv1} and further studied in \cite{cdv2,cmm,CIM}. The map taking an electrical network to its response matrix is the electrical-network analog of the boundary measurement map. {However, this is more than an analogy and the two constructions are directly related as we now explain.}

The generalized Temperley's bijection of Kenyon, Propp and Wilson \cite{KPW} associates to each electrical network $(G,c)$ a weighted bipartite graph $(G_+,[\wt_+])$, giving an embedding $j_G^+:\mathcal R_G \hookrightarrow \mathcal X_{G_+}$. {The graph $G_+$ has $2n$ boundary vertices and $\#W -\# B =n+1$}. Lam \cite{Lam} studied the composition $\Meas_{G_+} \circ j_G^+ : \mathcal R_G \ra \Grpos$ and showed that the image of $\mathcal R_G$ is a linear slice of $\Grpos$, which was subsequently identified with a positive Lagrangian Grassmannian $\IGpos$ of points in $\Grpos$ that are isotropic for a degenerate skew-symmetric bilinear form $\Omega$ in \cite{BGKT, CGS} (see also \cite{LP1}). \cite[Theorem 1.8]{CGS} explicitly identifies the space of response matrices with $\IGpos$. Therefore, in principle, the inverse problem for electrical networks can be solved using the inverse boundary measurement. However, in practice, the result of inverting the boundary measurement yields a weight on $G_+$ to which one has to apply a complicated gauge transformation to obtain the conductances. The main goal of the paper is to construct a twist map directly for electrical networks without embedding into weighted bipartite graphs.

	Like the space $\mathcal A_\Gamma$, there is a second space $\mathcal B_{G}$ associated with an electrical network parameterized by the $B$ variables. The space $\mathcal B_G$ consists of functions $B:V(G) \sqcup F(G) \ra \Rpos$, and there is a canonical map $q_G: \mathcal B_G \ra \mathcal R_G$ defined as follows. 
Let $e=uv$ be an edge of $G$ and let $f,g$ denote the faces of $G$ incident to $e$. Define $q_G: \mathcal B_G \ra \mathcal R_{G}$ by $c(e):=\frac{B_u B_v}{B_f B_g}$ (cf. Equation (56) in \cite[Section 5.3.1]{GK}). The space $\mathcal B_G$ arises from the study of the cube recurrence, a nonlinear recurrence introduced by Propp \cite{Propp} whose solutions were characterized combinatorially by Carroll and Speyer \cite{CS} (see also \cite{FZ,LP2}). The cube recurrence was further studied by Henriques and Speyer \cite{HenriquesSpeyer}, who related it to the orthogonal Grassmannian $\operatorname{OG}(n+1,2n)$ of $(n+1)$-dimensional subspaces that are coisotropic for a certain symmetric bilinear form $Q$. $\operatorname{OG}(n+1,2n)$ has an embedding in $\C\mathbb P{^{2^{n-1}-1}}\times\C\mathbb P{^{2^{n-1}-1}}$ giving bihomogeneous coordinates on $\operatorname{OG}(n+1,2n)$ called \textit{Cartan coordinates} (which have explicit Pfaffian formulas; see~\cref{sec:pfformulas} ). Henriques and Speyer constructed a homeomorphism $\Psi_G: \OGpos \xrightarrow[]{\sim} \mathcal B_G$ assigning to each vertex and face of $G$ a Cartan coordinate, where $\OGdec$ is the ``affine cone" over $\operatorname{OG}(n+1,2n)$ and $\OGpos$ is the subset where all Cartan coordinates are positive. Our first main result is the following.
\begin{theorem}[cf.~\cref{thm:elecrt}] \label{thm:elecrtintro}
	There is a map $\vec\tau_{\rm elec}$, which we call the electrical right twist, such that the following diagram commutes.
	\[
	\begin{tikzcd} \mathcal B_G   \arrow[r,"q_G"] & \mathcal R_{G} \arrow[d,"\sim"',"\Meas_{G_+} \circ j_{G}^+"]\\ \OGpos \arrow[u,"\Psi_G","\sim"']
		\arrow[r,"\vec\tau_{{\rm elec}}"]
		&\IGpos
	\end{tikzcd}.
	\]
\end{theorem}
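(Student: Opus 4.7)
The plan is to set
\[
\vec\tau_{\rm elec} := (\Meas_{G_+} \circ j_G^+) \circ q_G \circ \Psi_G,
\]
so that the diagram commutes by definition, and then to establish two nontrivial properties: (i) the composition does land in $\IGpos$ rather than merely in $\Grpos$, and (ii) it is independent of the choice of well-connected graph $G$, justifying the name ``electrical right twist''. Property (ii) is the substantive content, since once it holds, inverting $\vec\tau_{\rm elec}$ will yield the biratio-of-Pfaffians formula for the inverse problem advertised in the abstract.

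For (i), the right vertical $\Meas_{G_+} \circ j_G^+$ factors through $\IGpos$ by Lam's theorem together with \cite{BGKT, CGS}, so the claim is immediate. For (ii), the strategy is to realize $\vec\tau_{\rm elec}$ as the restriction of the Muller--Speyer right twist $\vec\tau: \Grpos \to \Grpos$ under a natural embedding of $\OGpos$ into $\Grpos$. Because $\vec\tau$ is a graph-independent automorphism of $\Grpos$, its restriction will inherit this property.

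The core of the argument is a dictionary between the electrical and dimer pictures via the generalized Temperley bijection, carried out in two steps. First, I would identify $\Psi_G$ with $\Phi_{G_+}$ restricted to the orthogonal slice: each vertex and each face of $G$ corresponds to a face of $G_+$ under Temperley, and the Cartan Pfaffian assigned to that vertex or face by $\Psi_G$ should match, up to normalization, the Pl\"ucker coordinate assigned to the corresponding face of $G_+$ by $\Phi_{G_+}$ composed with the embedding $\OGpos \hookrightarrow \Grpos$. Second, I would identify $q_G$ with $p_{G_+} \circ j_G^+$: an edge $e=uv$ of $G$ with adjacent faces $f,g$ splits under Temperley into four edges $m_e u, m_e v, m_e f, m_e g$ of $G_+$, and one must check that the conductance formula $c(e) = B_uB_v/(B_fB_g)$ coincides, modulo gauge, with the composite edge-weight prescribed by $p_{G_+}$ on these four edges. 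Once both identifications are in place, the commutativity of our diagram becomes the restriction of the Muller--Speyer commutative diagram on $G_+$ to the orthogonal slice, and $\vec\tau_{\rm elec} = \vec\tau|_{\OGpos}$ is manifestly independent of $G$.

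The main obstacle will be the Cartan-to-Pl\"ucker dictionary. Cartan coordinates are Pfaffians of a matrix built from the symmetric bilinear form $Q$ and live naturally on the affine cone $\OGdec$ (a double cover of $\OG$), whereas Pl\"ucker coordinates are minors on $\Grpos$. Matching the Pfaffian identities underlying Henriques--Speyer's cube recurrence with the three-term Pl\"ucker relations used in Scott's $\Phi_{G_+}$, and ensuring that signs and normalizations are compatible with the positivity of both $\OGpos$ and $\IGpos$ along boundary cyclic intervals, will require careful case analysis on the Temperley structure. After this is done, the remainder of the argument reduces to restricting the Muller--Speyer theorem to the slice cut out by the degenerate symplectic form $\Omega$.
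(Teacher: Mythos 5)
Your overall route---pass to $G_+$ via the Temperley bijection, invoke Muller--Speyer on $G_+$, and translate Cartan data into Pl\"ucker data---is indeed the paper's route, but two of your key claims fail as stated, and the first is exactly the point where the electrical twist genuinely differs from the dimer twist. The composition $(\Meas_{G_+}\circ j_G^+)\circ q_G\circ \Psi_G$ is \emph{not} the restriction of $\vec\tau$ to the image of $\OGpos$ in $\Grpos$. For internal edges your identification of $q_G$ with $p_{G_+}\circ i_G^+$ does hold up to the gauge transformation $\tilde g(\bv_e)=1/(B_uB_v)$, $\tilde g(\wv_u)=B_u^2$; but a gauge transformation is required to equal $1$ at boundary vertices, and at an edge of $G_+$ incident to the boundary vertex $d_i$ the two weights differ by the irremovable factor $B_{d_{i-1}}/B_{d_i}=\Sigma_{J(d_{i-1})}/\Sigma_{J(d_i)}$. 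This is why the paper \emph{defines} $\vec\tau_{\rm elec}(X,s_+,s_-):=t\cdot\vec\tau(X)$ with $t_i=\Sigma_{J(d_{i-1})}/\Sigma_{J(d_i)}$, i.e.\ the Muller--Speyer twist followed by a point-dependent column rescaling; without that rescaling the diagram does not commute, and there is no reason for the unrescaled $\vec\tau|_{\eta(\OGpos)}$ to land in $\IGpos$. Since your plan for invertibility (hence for the inverse problem) rests on $\vec\tau_{\rm elec}$ literally being $\vec\tau$ on a slice, this gap propagates to everything downstream.

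Second, your Cartan-to-Pl\"ucker dictionary is built on the wrong combinatorial correspondence. Under Temperley, vertices and faces of $G$ become \emph{white vertices} of $G_+$, not faces of $G_+$; each face $g$ of $G_+$ is a quadrilateral incident to one vertex-type and one face-type white vertex $\wv_u,\wv_f$. The correct dictionary (\cref{prop:atob} together with \cref{lem:jvjf}) is $\Delta_{S(g)}(X,v)=\Sigma_{J(u)}\Sigma_{J(f)}$: a single Pl\"ucker coordinate of a face of $G_+$ equals a \emph{product of two} Cartan coordinates, so $i_G^+(B)_g=B_uB_f$, not a one-to-one matching of coordinates. Proving this is not a matter of matching the cube recurrence against three-term Pl\"ucker relations; it requires the $\Spin(Q)$-equivariant comparison of $\extp^{n+1}\C^{2n}$ with the summand of $S_+\otimes S_-$ (\cref{prop:delx}), with its attendant sign bookkeeping. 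With these two corrections your outline does become the paper's proof: one checks, edge by edge and separately for internal and boundary edges, that $p_{G_+}\circ i_G^+(B)$ and $j_G^+\circ q_G(B)$ agree after applying the gauge $\tilde g$ and the boundary rescaling $t$.
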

The two spaces on the left of the commutative diagram in Theorem~\ref{thm:elecrtintro} have dimension $\binom{n+1}{2}+1$, whereas the two spaces on the right have dimension $\binom n 2$. Therefore, the electrical right twist as defined is not invertible. Our second main result is:

\begin{theorem}[cf.~\cref{thm:elecrtltinv}]\label{thm:intro2}
	There are actions of $\Rpos^{n+1}$ on $\mathcal B_G$ and $\OGpos$ compatible with $\Psi_G$ such that upon taking quotients, $q_G$ and $\vec \tau_{\rm elec}$ are invertible. The inverse $\cev \tau_{\rm elec}$ is called the \textit{electrical left twist} and the following diagram commutes:
	\[
	\begin{tikzcd}[
		column sep={6em}
		] \mathcal B_G/\Rpos^{n+1} \arrow[r,"q_G","\sim"']    & \mathcal R_{G} \arrow[d,"\Meas_{G_+} \circ j_G^+","\sim"']\\ \OGpos/\Rpos^{n+1} \arrow[u,"\Psi_G","\sim"']
		\arrow[r,bend right=10,"\sim","{\vec{\tau}_{\rm elec}}"'] 
		& \IGpos \arrow[l,bend right=10,"\sim","{\cev{\tau}_{\rm elec}}"'] 
	\end{tikzcd}.
	\]
\end{theorem}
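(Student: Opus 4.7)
The plan is to identify the gauge group of $q_G$, transport its action to $\OGpos$ through the homeomorphism $\Psi_G$, and then apply the commutative diagram of \cref{thm:elecrtintro} to descend $\vec\tau_{\rm elec}$ to a bijection on quotients, with $\cev\tau_{\rm elec}$ defined as its inverse. Concretely, a tuple $\lambda\in\Rpos^{V(G)\sqcup F(G)}$ acts on $\mathcal B_G$ by $(\lambda\cdot B)_x:=\lambda_x B_x$, and preserves the conductances $c(e)=B_uB_v/(B_fB_g)$ precisely when $\lambda_u\lambda_v=\lambda_f\lambda_g$ for every edge $e=uv$ with incident faces $f,g$. Taking logarithms turns this into a linear system in $|V(G)|+|F(G)|$ unknowns with $|E(G)|$ relations; I would show the solution space has dimension exactly $n+1$, using the disk Euler relation together with the combinatorics of the $n$ boundary arcs, and exhibit a basis consisting of the overall scale together with $n$ boundary-localized gauges. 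Equivalently, the balanced condition $\lambda_u\lambda_v=\lambda_f\lambda_g$ can be viewed as a discrete harmonicity condition on the medial graph of $G$, and the dimension read off from boundary considerations.

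Next I would transport this action to $\OGpos$ via $\Psi_G$; compatibility is built in by construction, and an intrinsic description on $\OGpos$ follows by pulling the scaling back to the Cartan coordinates, which I expect to realize the action as a combination of the two ambient scalings on $\C\mathbb P^{2^{n-1}-1}\times\C\mathbb P^{2^{n-1}-1}$ together with a torus action preserving the form $Q$. The induced map $\bar q_G:\mathcal B_G/\Rpos^{n+1}\to\mathcal R_G$ is then injective (the gauge group is exactly the kernel) and surjective (each conductance admits a $B$-lift, which can be exhibited explicitly via the Pfaffian formulas appearing later in the paper). Combining this with \cref{thm:elecrtintro}, which factors $\vec\tau_{\rm elec}=(\Meas_{G_+}\circ j_G^+)\circ q_G\circ\Psi_G$, and using that $\Meas_{G_+}\circ j_G^+$ and $\Psi_G$ are already bijections, one concludes that $\vec\tau_{\rm elec}$ descends to a bijection $\OGpos/\Rpos^{n+1}\to\IGpos$; set $\cev\tau_{\rm elec}$ to be its inverse, and commutativity of the final diagram is automatic.

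The main obstacle is pinning down the gauge group precisely. The lower bound of $n+1$ follows from exhibiting explicit gauges, but the upper bound requires a careful dimension count interacting with the planar-disk structure and the placement of boundary vertices. One could alternatively bypass the explicit count by proving surjectivity of $\bar q_G$ directly via the Pfaffian/cube-recurrence formulas and then matching dimensions with $\dim\IGpos$, which is already known. A secondary, more conceptual challenge is to give the $\Rpos^{n+1}$-action on $\OGpos$ an intrinsic, coordinate-free description, rather than defining it purely by transport through $\Psi_G$.
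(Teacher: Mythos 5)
Your route is genuinely different from the paper's. The paper never analyzes the fibers of $q_G$ abstractly: it writes down the action of $\Rpos\times\Rpos^n$ explicitly in terms of the strand labels $J(v)$ (formula (\ref{eq:Action})), checks invariance of $q_G$ by the cyclic-rotation pattern of the four labels around an edge (\cref{lem:qtequi}), and then proves invertibility by \emph{constructing} $\cev\tau_{\rm elec}$ in closed form as $X\mapsto t\cdot\cev\tau(X)$ for a column rescaling $t$ determined by boundary Pl\"ucker ratios (\cref{lem:wtaplha}, \cref{lem:ltog}), verifying $\cev\tau_{\rm elec}\circ\vec\tau_{\rm elec}=\mathrm{id}$ and invoking surjectivity from \cite[Proposition 4]{KW}. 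Your approach instead identifies the full gauge group $\Lambda=\{\lambda:\lambda_u\lambda_v=\lambda_f\lambda_g\}$ and argues that the fibers of $q_G$ are exactly its orbits. This is a legitimate alternative, and in fact your deferred rank computation can be obtained for free: $q_G$ is a monomial map, i.e.\ a homomorphism of tori $\Rpos^{V\sqcup F}\to\Rpos^{E}$, so once surjectivity is known its kernel is a connected subgroup of dimension $|V|+|F|-|E|=n+1$ (the last equality following from $\dim\mathcal B_G=\dim\OGpos=\binom{n+1}{2}+1$ and $|E|=\binom n2$), and the fibers are its cosets. One still has to check that the specific $\Rpos^{n+1}$ acting in (\ref{eq:Action}) is free and therefore fills out all of $\Lambda$, which follows since adjacent vertex/face labels differ by one strand.

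Two genuine gaps remain. First, your surjectivity argument is circular as stated: the Pfaffian formulas of \cref{sec:recon} are the \emph{output} of the inverse map you are trying to build, so they cannot be used to produce a $B$-lift of a given conductance function. Surjectivity of $q_G$ must come from an external source (the paper uses \cite[Proposition 4]{KW}); without it your kernel-dimension argument also collapses, since the rank bound was to be extracted from surjectivity. Second, your proof produces $\cev\tau_{\rm elec}$ only as an abstract set-theoretic inverse. That suffices for the literal statement once you also address continuity of the inverse (the quotient map and invariance of domain, or the explicit formula, handle this; a continuous bijection is not automatically a homeomorphism), but it does not deliver the explicit description $X\mapsto t\cdot\cev\tau(X)$ that the rest of the paper depends on to actually recover conductances from the response matrix. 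If you want your argument to replace the paper's, you should still extract the explicit formula, which is exactly the content of \cref{lem:wtaplha} and \cref{lem:ltog}.
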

As a consequence, we get that the composition $q_G \circ \Psi_G\circ \cev\tau_{\rm elec}$ solves the inverse problem for electrical networks. We work out the inverse map explicitly when $n=3$ in~\cref{sec:recon}.

The inverse problem for electrical networks was first solved using a recursive procedure by Curtis, Ingerman and Morrow \cite{CIM} (see also \cite{net1,net2,net3}). More recently, explicit rational formulas were given by Kenyon and Wilson \cite{KW1,KW}. In the formulas in \cite{KW1,KW}, the conductances are expressed as biratios of certain variables called \textit{tripod variables} which are only defined for special networks called standard networks. The advantage of our construction is that it works for any well connected electrical network and uses the more canonical $B$ variables instead of the tripod variables. We mention that the inverse problem has also been studied in the cylinder \cite{LP3} and the torus \cite{George}.
{On the torus, the inverse map of \cite{George} also factors through $q_G: \mathcal B_G \ra \mathcal R_{G}$ (the $B$ variables are certain Prym theta functions), which further advocates for the naturality of our construction.}

We end the introduction with some open problems. If the graph $G$ is not well connected, then $\mathcal R_G$ parameterizes a smaller electroid cell in $\IG$ which is the intersection of a positroid cell with $\IG$ \cite{Lam}. Muller and Speyer defined the twist map for all postroid cells which suggests the following problem.

\begin{problem}
	Construct a stratified space whose strata are parameterized by $\mathcal B_G$ where $G$ varies over move-equivalence classes of reduced graphs with $n$ vertices on the boundary of the disk. Define an electrical twist map that homeomorphically maps the strata to electroid cells in $\IG$.
\end{problem}

There is another notion of positive orthogonal Grassmannian introduced in \cite{HWX} which was used to parameterize the Ising model by
Galashin and Pylyavskyy \cite{Galpy}. Similarly, there is a positive Lagrangian Grassmannian associated with the cluster side $\mathcal A$ of the Ising model, introduced by Kenyon and Pemantle \cite{KP1,KP2} in relation to the Kashaev recurrence \cite{Kashaev}. The two notions of positive orthogonal/Langrangian Grassmannian do not agree. Instead, we expect the relationship to be as in the table below, where the two spaces in each row are related by twist.

\begin{center}
	\begin{tabularx}{0.8\textwidth} { 
			| >{\centering\arraybackslash}X 
			| >{\centering\arraybackslash}X 
			| >{\centering\arraybackslash}X | }
		\hline
		& cluster $\mathcal A$ side & cluster $\mathcal X$ side \\
		\hline
		dimer models  & positive Grassmannian \cite{Scott}& positive Grassmannian \cite{Post} \\
		\hline
		electrical networks  & positive orthogonal Grassmannian \cite{HenriquesSpeyer}  &  positive Lagrangian Grassmannian \cite{BGKT,CGS} \\
		\hline
		Ising models  & positive Lagrangian Grassmannian \cite{KP1}  & positive orthogonal Grassmannian \cite{Galpy}  \\
		\hline
	\end{tabularx}
\end{center}
\begin{problem}
	Define a twist map for the Ising model relating the positive orthogonal Grassmannian in \cite{Galpy} with the positive Lagrangian Grassmannian in \cite{KP2}. 
\end{problem}
We mention that results relating orthogonal and Lagrangian Grassmannians also appear in \cite{Wang1, Wang2}, but the connection to the above table is unclear.

	\subsection*{Acknowledgments}
	This project originated from conversations with Sunita Chepuri and David Speyer. I also thank David for many discussions on his papers \cite{HenriquesSpeyer} and \cite{MullerSpeyer}. {I also thank the referees for many helpful suggestions.}

	\section{Background on the dimer model and the positive Grassmannian}
	In this section, we review background on the positive Grassmannian, dimer models, and the twist map.

	\subsection{Grassmannians and Pl\"ucker coordinates} \label{sec:grassmann}
	
	The \textit{Grassmannian} $\Grkn$ is the space of $k$-dimensional subspaces of $\C^{n}$. Let $e_1,\dots,e_{n}$ denote the standard basis of $\C^{n}$. For $I = \{i_1 <i_2 < \cdots < i_{k}\} \in \binom{[n]}{k}$, let $e_I:= e_{i_1} \wedge \cdots \wedge e_{i_{k}}$. Then, the $e_I$ form a basis for $\extp^{k} \C^{n}$. The \textit{Pl\"ucker embedding} is the closed embedding $\pl: \Grkn \hookrightarrow \mathbb P(\extp^{k} \C^{n})$ sending a subspace $X$ spanned by $\vect_1,\dots,\vect_{k}$ to $[\vect_1 \wedge \cdots \wedge \vect_{k}]$. The coefficients $\Delta_I(X)$ of $e_I$ in $\vect_1 \wedge \cdots \wedge \vect_{k}$ are called \textit{Pl\"ucker coordinates}. Following \cite{Weng}, we call $\Grkndec:=\{ (X,\vect) \mid X \in \Grkn, \vect \in \extp^k X \}$ the \textit{decorated Grassmannian}. Given $(X,\vect) \in \Grkndec$, we denote the coefficient of $e_I$ in $v$ by $\Delta_I(X,\vect)$. Changing the basis multiplies all the Pl\"ucker coordinates by a common scalar, so they are well-defined functions on $\Grkndec$ but not on $\Grkn$.

	Let $\operatorname{Mat}^\circ(k,n)$ denote the space of $k \times n$ matrices of rank $k$. $\operatorname{GL_k}$ acts on $\operatorname{Mat}^\circ(k,n)$ by left multiplication and we have identifications
	\begin{equation} \label{eq:mattogr}
		\operatorname{GL}_k \backslash \operatorname{Mat}^\circ(k,n) \cong \Grkn ~\text{and}~\operatorname{SL}_k \backslash \operatorname{Mat}^\circ(k,n) \cong \Grkndec
	\end{equation}
	sending the matrix with rows $\vect_1,\dots,\vect_k$ to $\Span(\vect_1,\dots,\vect_k)$ and $(\Span(\vect_1,\dots,\vect_k),\vect_1\wedge \cdots \wedge \vect_k)$ respectively.
	
	Let $\Grkndecpos$ denote the \textit{positive decorated Grassmannian}, the subset of $\Grkndec$ where where all Pl\"ucker coordinates are positive real numbers, and let $\Grknpos$ denote the \textit{positive Grassmannian}, the subset of $\Grkn$ where the ratio of any two Pl\"ucker coordinates is a positive real number.

	\subsection{Planar bipartite graphs in the disk} \label{section:bgraph}
	Let $\Gamma=(B \sqcup W,E,F)$ be a {planar} bipartite graph embedded in a disk $\disk$ with $n$ vertices on the boundary of $\disk$ labeled $d_1,d_2,\dots,d_{n}$ in clockwise cyclic order. {Here, $B$ denotes the set of black vertices, $W$ the set of white vertices, $E$ the set of edges and $F$ the set of faces respectively. Further, we assume that all the boundary vertices are white.} Let $k:=\#W-\#B$. 
	\begin{figure}
	\begin{center}
		\begin{tabular}{cc}
			\includegraphics[width=0.3\textwidth]{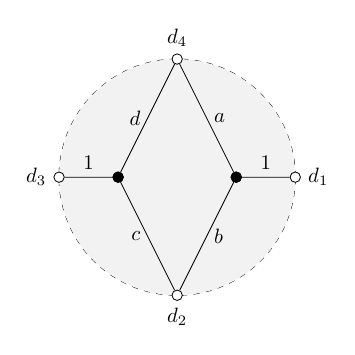}&
			\includegraphics[width=0.3\textwidth]{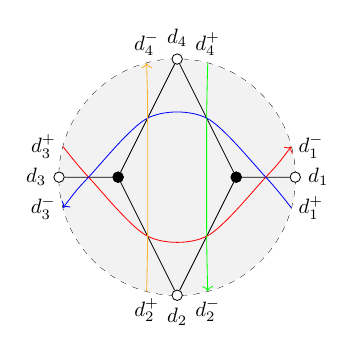}
			\\
			(a)  &(b) 
		\end{tabular}
		\caption{\label{fig:bipgraph} A bipartite graph $\Gamma$ (a) and its medial graph $\Gamma^\times$ with strands (b).}
	\end{center}
\end{figure}
{
The \textit{(oriented) medial graph} $\Gamma^\times$ of $\Gamma$ is the graph obtained as follows. Place $2n$ vertices of $\Gamma^\times$ labeled $d_1^{-},d_1^{+},\dots,d_n^{-},d_n^{+}$ on the boundary of $\disk$ such that $d_i$ is between $d_i^{-}$ and $d_i^+$. Place a vertex $v_e$ in the middle of each edge $e$ of $\Gamma$. Connect $v_e$ and $v_{e'}$ by an edge if they occur consecutively around a face of $\Gamma$. For each $i \in [n]$, connect $d_i^-$ (resp., $d_i^+$) to $v_e$ if $e$ is the last (resp., first) edge in clockwise order incident to $d_i$. By construction, each $d_i^-$ and each $d_i^+$ has degree $1$ and each $v_e$ degree $4$ in $\Gamma^\times$. Orient the edges clockwise around white vertices and counterclockwise around black vertices. Note that this means that edges incident to $d_i^-$ (resp., $d_i^+$) are oriented towards the outside (resp., inside) of $\disk$. 
}

\begin{figure}
	
	\includegraphics[width=0.2\textwidth]{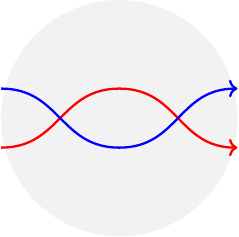}
	
	\caption{\label{fig:parallel_bigon} A parallel bigon. }
\end{figure}
	
{	A \textit{strand} of $\Gamma$ is an oriented walk in $\Gamma^\times$ that either starts and ends at the boundary or is an internal cycle, and at each (degree $4$) vertex of the form $v_e$, the outgoing edge is opposite the incoming one (see Figure~\ref{fig:bipgraph}).
}
 We say that $\Gamma$ is \textit{reduced} (or \textit{minimal}) if:
	\begin{enumerate}
		\item Each strand starts and ends on the boundary, i.e., no strand path is an internal cycle.
		\item No strand has a self-intersection unless it corresponds to a black leaf incident to a boundary white vertex.
		\item Strands do not form ``parallel bigons", i.e., there is no pair of strands that intersect twice in the same direction (Figure~\ref{fig:parallel_bigon}).
	\end{enumerate}
{It is customary to identify a strand with the corresponding oriented walk in $\Gamma$ that uses the edges $e$ of $\Gamma$ in the same order that $v_e$ appear in the strand. Note that such a path turns maximally left at white vertices and maximally right at black vertices, and is called a \textit{zig-zag path}.}

	Let $d_{\pi_\Gamma(i)}^+$ denote the endpoint of the strand that starts at $d_i^-$. Then, $\pi_\Gamma:[n] \ra [n]$ is a permutation called the \textit{strand permutation} of $\Gamma$. Let $\pi_{k,n}:[n] \ra [n]$ be the permutation $(k+1,k+2,\dots,n,1,2,\dots,k-1)$.
	
	\begin{remark}
		If $\pi_\Gamma(i)=i$, then we also have to specify a color for $i$, but this does not occur in $\pi_{k,n}$. 
	\end{remark}
	\begin{figure}
		\begin{center}
			\begin{tabular}{ccc}
				\includegraphics[width=0.4\textwidth]{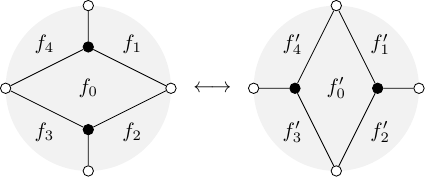}
				& \qquad &
				\includegraphics[width=0.4\textwidth]{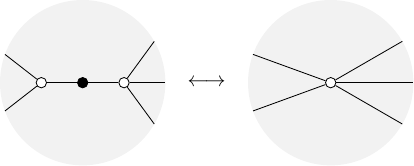}
				\\
				(a) The spider move. & & (b) The contraction-uncontraction move. 
			\end{tabular}
		\end{center}
		\caption{\label{fig:bipartitemoves} Moves for bipartite graphs. }
	\end{figure}

	We say that two planar bipartite graphs $\Gamma$ and $\Gamma'$ are \textit{move-equivalent} if they are related by the moves shown in Figure~\ref{fig:bipartitemoves}. Each move $\Gamma \rightsquigarrow \Gamma'$ induces a canonical bijection between $F(\Gamma)$ and $F(\Gamma')$; we denote the face of $\Gamma'$ corresponding to the face $f$ of $\Gamma$ by $f'$. Postnikov \cite{Post} and Thurston \cite{Thurston} showed that two reduced bipartite graphs are move-equivalent if and only if they have the same strand permutation.

	\subsection{Dimer models and boundary measurement} \label{sec:dimerboundary}
	
	Let $\wt:E(\Gamma) \ra \Rpos$ be a function called an \textit{edge weight}. Two edge weights ${\rm wt}_1$ and ${\rm wt}_2$ are said to be \emph{gauge equivalent} if there is a function $g:B(\Gamma) \sqcup W(\Gamma) \ra \Rpos$ that is equal to $1$ on the boundary vertices such that for every edge $e= {\rm b}{\rm w}$ with ${\rm b} \in B(\Gamma), {\rm w}\in W(\Gamma)$, we have ${\rm wt}_2(e)=g({\rm b})^{-1}{\rm wt}_1(e) g({\rm w}) $. Let $\mathcal X_\Gamma := \Rpos^{E(\Gamma)}/\text{gauge}$ denote the space of edge weights on $\Gamma$ modulo gauge equivalence. We denote the gauge equivalence class of $\wt$ by $[\wt]$. A pair $(\Gamma,[\wt])$ with $[\wt] \in \mathcal X_\Gamma$ is called a \textit{dimer model}.
	
	For a face $f$ of $\Gamma$ with {counterclockwise-oriented} boundary ${\rm w}_1 \xrightarrow[]{e_1} {\rm b}_1 \xrightarrow[]{e_2} {\rm w}_2 \xrightarrow[]{e_3} {\rm b}_2 \xrightarrow[]{e_4} \cdots \xrightarrow[]{e_{2k-2}} {\rm w}_k \xrightarrow[]{e_{2k-1}} {\rm b}_k \xrightarrow[]{e_{2k}} {\rm w}_1$, let
	\[
	X_f:=\prod_{i=1}^k \frac{\wt(e_{2i})}{\wt(e_{2i-1})} 
	\]
	denote the alternating product of the edge weights around the boundary of $f$. The $X_f$'s are invariant under gauge equivalence and provide coordinates on $\mathcal X_\Gamma$ satisfying the relation $\prod_{f \in F(\Gamma)} X_f=1$, so $\mathcal X_\Gamma \cong \Rpos^{\#F(\Gamma)-1}$.
	
	A move $\Gamma \rightsquigarrow \Gamma'$ induces a homeomorphism $\mathcal X_{\Gamma} \xrightarrow[]{\sim } \mathcal X_{\Gamma'}$ defined as follows:
	\begin{enumerate}
		\item Spider move at a face $f_0$: The homeomorphism $\mathcal X_{\Gamma} \xrightarrow[]{\sim } \mathcal X_{\Gamma'}$ is given by
		\[
		X_{f_0'} := \frac{1}{X_{f_0}}, X_{f_1'}:= X_{f_1}(1+X_{f_0}),X_{f_2'}:= \frac{X_{f_2}}{(1+\frac{1}{X_{f_0}})},X_{f_3'}:= X_{f_3'}(1+X_{f_0}),X_{f_4}:= \frac{X_{f_4'}}{(1+\frac{1}{X_{f_0}})},
		\]
		and $X_{f'}:=X_f$ for $f' \in F(\Gamma') \setminus \{f_0',f_1',f_2',f_3',f_4'\}$.
		\item Contraction-uncontraction move: The homeomorphism $\mathcal X_{\Gamma} \xrightarrow[]{\sim } \mathcal X_{\Gamma'}$ is $X_{f'}:= X_f$ for all $f' \in F(\Gamma')$.
	\end{enumerate}

	Given a strand permutation $\pi$, let $\mathcal X_\pi:= \bigsqcup_{\pi_\Gamma=\pi} \mathcal X_{\Gamma}\Big/\text{moves}$ denote the \textit{space of dimer models}, where the union is over all reduced bipartite graphs $\Gamma$ with strand permutation $\pi$.
	
	A \textit{dimer cover} (or \textit{almost perfect matching}) of $\Gamma$ is a subset of $E(\Gamma)$ that uses each internal vertex of $\Gamma$ and a subset of the boundary vertices exactly once. The \textit{weight} $\wt(M)$ of a dimer cover $M$ is defined to be $\prod_{e \in M} \wt(e)$. For a dimer cover $M$, let 
	\[
	\partial M  := \{ i\in [n] \mid d_i~\text{is not used by $M$}\} \in \binom{[n]}{k},
	\]
{where $n$ and $k$ are as in Section~\ref{section:bgraph}.} For $I \in \binom{[n]}{k}$, define the \textit{dimer partition function}
	\[
	Z_I:=\sum_{M \mid \partial M=I} \wt(M).
	\]
	Postnikov \cite{Post} defined the \textit{boundary measurement map}
	\[
	\Meas_\Gamma: \mathcal X_\Gamma \ra \mathbb P(\extp^{k} \C^{n})
	\]
	sending $[\wt]$ to $[\sum_{I \in \binom{[n]}{k}} Z_I e_I]$. $\Meas_\Gamma$ is well-defined, since the gauge equivalence multiplies all $Z_I$'s by a scalar. The following theorem is due to Postnikov \cite{Post} in a different language (see also \cite{PSW} and \cite[Corollary 7.14]{Lam2}).

	\begin{theorem}
		For a reduced $\Gamma$ with $\pi_\Gamma = \pi_{k,n}$, $\Meas_{\Gamma}: \mathcal X_{\Gamma} \xrightarrow[]{\sim} \Grknpos$ is a homeomorphism. If $\Gamma$ and $\Gamma'$ are related by a move, then the following diagram commutes:
		\[
		\begin{tikzcd}
			\mathcal X_{\Gamma}\arrow[dr,"\Meas_{\Gamma}","\sim"'] \arrow[dd,"\text{move}"',"\sim"]&\\ &\Grknpos\\ \mathcal X_{\Gamma'} \arrow[ur,"\Meas_{\Gamma'}"',"\sim"]& 
		\end{tikzcd}.
		\]
		Therefore, the maps $\Meas_\Gamma$ glue to a homeomorphism $\Meas: \mathcal X_{\pi_{k,n}} \xrightarrow{\sim} \Grknpos$.
	\end{theorem}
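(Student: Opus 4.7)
The plan is to establish the three claims in sequence: (i) $\Meas_\Gamma$ lands in $\Grknpos$; (ii) it is a homeomorphism onto $\Grknpos$; (iii) it intertwines the moves with the identity on $\Grknpos$. I will use throughout that, by Postnikov--Thurston, all reduced $\Gamma$ with $\pi_\Gamma=\pi_{k,n}$ are move-equivalent, and that the number of faces of such a $\Gamma$ is $k(n-k)+1$, so $\dim\mathcal X_\Gamma=k(n-k)=\dim\Grknpos$.

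For (i), first check that $[\sum_I Z_I e_I]$ actually lies in the Pl\"ucker image. The standard route is Kasteleyn/Lindstr\"om-type reasoning: one picks a base dimer cover $M_0$ with $\partial M_0=I_0$ and writes every other cover as the symmetric difference $M_0\triangle C$ for a collection of alternating curves from boundary sources to boundary sinks. Expanding the Pl\"ucker embedding of the $k\times n$ matrix whose columns are the ``flow'' polynomials built out of strands/weights then expresses each $Z_I$ as $\det$ of an appropriate minor times a common gauge factor, verifying the Pl\"ucker relations simultaneously with positivity: each $Z_I$ is a positive sum of monomials in the edge weights, and when $\pi_\Gamma=\pi_{k,n}$ (the top cell) every $I\in\binom{[n]}{k}$ admits a dimer cover with $\partial M=I$, so $Z_I>0$ for all $I$. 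Hence the image lies in $\Grknpos$.

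For (ii), I would establish the homeomorphism by exhibiting an inverse on face weights and using dimension. Fix $\Gamma$; express $X_f$ as a Laurent monomial in the Pl\"ucker coordinates $\Delta_I$ by Scott's rule (each face $f$ corresponds to a Pl\"ucker label $I(f)$ read off from the strands surrounding $f$, and $X_f$ is the alternating product of these Pl\"uckers around the vertex in the cluster quiver of $\Gamma$). This yields a continuous map $\Grknpos\to\mathcal X_\Gamma$; both spaces are homeomorphic to $\Rpos^{k(n-k)}$, and direct substitution in the face-weight formulas shows the two compositions are the identity on a dense (in fact, open) subset, hence everywhere by continuity. This step -- checking that Scott's Pl\"ucker labels really invert the face-weight map on the nose -- is the main technical obstacle and is where the reducedness of $\Gamma$ enters crucially (the labels $I(f)$ are well defined and pairwise distinct only in the reduced case).

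For the move invariance part of the theorem, the contraction-uncontraction move manifestly preserves every dimer cover weight (up to a gauge), so $\Meas_\Gamma=\Meas_{\Gamma'}$ is immediate. For the spider move at a face $f_0$, one verifies on the five faces $f_0,\dots,f_4$ that the prescribed transformations
\[
X_{f_0'}=X_{f_0}^{-1},\qquad X_{f_i'}=X_{f_i}(1+X_{f_0})^{\pm 1}\quad(i=1,\dots,4),
\]
together with Scott's formulas, coincide with a single three-term Pl\"ucker relation among the Pl\"ucker labels involved in the local picture. This is a finite local check. Combining (i)--(iii), the maps $\Meas_\Gamma$ glue to a well-defined continuous map on $\mathcal X_{\pi_{k,n}}=\bigsqcup_\Gamma\mathcal X_\Gamma/\text{moves}$, and since each $\Meas_\Gamma$ is a homeomorphism onto $\Grknpos$, so is the glued map $\Meas$.
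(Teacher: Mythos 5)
The paper does not prove this theorem; it is quoted as background and attributed to Postnikov (with \cite{PSW} and \cite{Lam2} for the dimer formulation), so there is no internal proof to compare against. Judged on its own terms, your step (i) is a reasonable sketch of the standard argument (Talaska/Lindstr\"om--Gessel--Viennot via a perfect orientation, plus the observation that on the top cell every $I$ is realized by some dimer cover), and the spider-move check in (iii) is routine, though it is cleaner to verify it directly on the partition functions $Z_I$ rather than through Pl\"ucker labels, which presupposes (ii).

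The genuine gap is in step (ii). Your proposed inverse sends $X\in\Grknpos$ to the face weights $X_f=\prod_g \Delta_{S(g)}(X)^{b_{gf}}$, the alternating Pl\"ucker cross-ratios of the image point itself. This is \emph{not} the inverse of $\Meas_\Gamma$: by the Muller--Speyer theorem quoted in this paper (\cref{thm:MSmain}), $\Meas_\Gamma\circ p_\Gamma\circ\Phi_\Gamma=\vec\tau$, so the recipe ``cross-ratios of $\Delta_{S(g)}$'' inverts $\Meas_\Gamma$ only after first applying the left twist $\cev\tau$, i.e.\ $\Meas_\Gamma^{-1}=p_\Gamma\circ\Phi_\Gamma\circ\cev\tau$. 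The twist is a nontrivial automorphism of $\Grknpos$, so the ``direct substitution'' you invoke would reveal that your two compositions equal $\vec\tau$ rather than the identity; one can already see this in the paper's $\operatorname{Gr}(2,4)$ example, where $\Phi_\Gamma\circ p_\Gamma$ applied to $X$ (Figure~\ref{fig:facelabels}(b)) is not gauge equivalent to the original weights, while the version with $\cev\tau$ inserted (Figure~\ref{fig:facelabels}(c)) is. Worse, you cannot simply patch the argument by inserting the twist, since Muller--Speyer's theorem is itself proved assuming Postnikov's theorem; a non-circular proof of bijectivity has to proceed differently, e.g.\ by Postnikov's inductive construction of the cell via adding bridges (equivalently the Le-diagram parameterization), each step of which is explicitly invertible on the positive part.
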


	\begin{example} \label{example:bip}
		Let $(\Gamma,\wt)$ be the weighted bipartite graph shown in Figure \ref{fig:bipgraph}(a). From the strands shown in Figure \ref{fig:bipgraph}(b), obtain the strand matching to be $\pi_{2,4}$. The boundary measurement map sends $[\wt]$ to $\left[	a e_{12}+ (ac+bd) e_{13} + be_{14} + d e_{23} + e_{24} +c e_{34}\right]$, which is the image under $\pl$ of
		\begin{equation} \label{eq:matrixX}
			X:=	\text{row span}	\begin{bmatrix}
				b&1&c&0\\
				-a&0&d&1
			\end{bmatrix}.
		\end{equation}
	\end{example}

	\subsection{\texorpdfstring{$A$}{A} variables} \label{sec:Avariables}
	
	Let $\mathcal A_{\Gamma}:= \Rpos^{F(\Gamma)}$ denote the space of functions $A:F(\Gamma) \ra \Rpos$. A move $\Gamma \rightsquigarrow \Gamma'$ induces a homeomorphism $\mathcal A_{\Gamma} \xrightarrow[]{\sim } \mathcal A_{\Gamma'}$ as follows:
	\begin{enumerate}
		\item Spider move at a face $f_0$: The homeomorphism $\mathcal A_{\Gamma} \xrightarrow[]{\sim } \mathcal A_{\Gamma'}$ is given by the cluster mutation formula
		\[
		A_{f_0'} := \frac{A_{f_1} A_{f_3}+A_{f_2}A_{f_4}}{A_{f_0}} 
		\]
		and $A_{f'}:=A_f$ for $f' \in F(\Gamma') \setminus \{f_0'\}$.
		\item Contraction-uncontraction move: The homeomorphism $\mathcal A_{\Gamma} \xrightarrow[]{\sim } \mathcal A_{\Gamma'}$ is $A_{f'}:= A_f$ for all $f' \in F(\Gamma')$.
	\end{enumerate}

	Let $\mathcal A_\pi := \bigsqcup_{\pi_\Gamma=\pi} \mathcal A_{\Gamma}\Big/\text{moves}$.
	
	\begin{remark}
		The spaces $\mathcal X_\Gamma$ and $\mathcal A_\Gamma$ are the positive points of the $\mathcal X$ and $\mathcal A$ {cluster tori} associated with $\Gamma$ respectively (see \cite{FockGon}), and $\mathcal X_\pi$ and $\mathcal A_\pi$ are the positive points of the $\mathcal X$ and $\mathcal A$ cluster varieties respectively. Since the cluster varieties do not appear directly in this paper, we have chosen to denote the positive points by $\mathcal X_\Gamma$ instead of $\mathcal X_\Gamma(\Rpos)$ etc.
	\end{remark}	
	\begin{figure}
		\begin{center}
			\begin{tabular}{ccc}
				\includegraphics[width=0.3\textwidth]{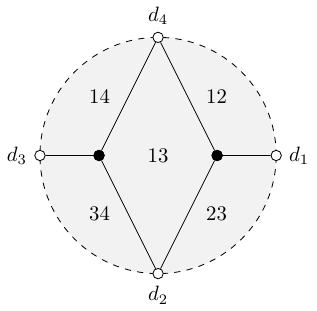} &\includegraphics[width=0.3\textwidth]{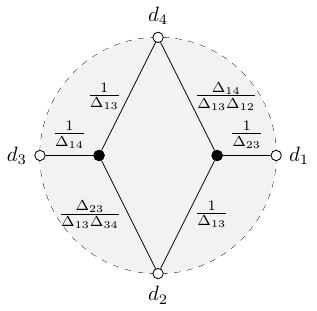}
				&\includegraphics[width=0.3\textwidth]{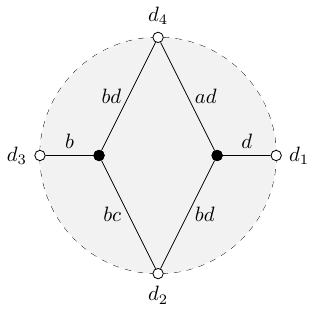}\\
				(a) Face labels. &(b) $\Phi_\Gamma \circ p_\Gamma$. &(c) $p_\Gamma \circ \Phi_\Gamma \circ \cev \tau$.
			\end{tabular}
			\caption{\label{fig:facelabels} Inverting the boundary measurement map for the graph in Figure \ref{fig:bipgraph}.}
		\end{center}
	\end{figure}	
	\begin{definition}
			{The faces of the medial graph $\Gamma^\times$ are in bijection with $B(\Gamma)\sqcup W(\Gamma) \sqcup F(\Gamma)$. We say that $f \in F(\Gamma)$ is to the left of a strand if the corresponding face of $\Gamma^\times$ is to the left of the strand.} For each face $f$ of $\Gamma$, define the \textit{(target) face label} 
		\[
		S(f) := \{i \in [n] \mid f~\text{is on the left of the strand ending at }d_i^+\}.
		\]
	
	\end{definition}
	For each face $f$, $S(f)$ is a $k$-element subset of $[n]$. Let $f_1^-,\dots,f_{n}^-$ denote the boundary faces of $\Gamma$ so that $f_i^-$ is between $d_{i-1}$ and $d_{i}$. If $\pi_\Gamma=\pi_{k,n}$, then $S(f_{i}^-) = \{i,i+1,\dots,i+k-1\}$ are the cyclically consecutive subsets. 
	
	\begin{example}
		For the graph in Figure \ref{fig:bipgraph}(a), using the strands shown in Figure \ref{fig:bipgraph}(b), we compute the face labels as shown in Figure \ref{fig:facelabels}. 
	\end{example}
	
	Scott \cite{Scott} defined the map
	\[
	\Phi_\Gamma: \Grkndecpos \ra \mathcal A_{\Gamma} 
	\]
	sending $(X,\vect)$ to $(\Delta_{S(f)}(X,\vect))_{f \in F(\Gamma)}$. 
	\begin{theorem}[Scott, {\cite[Theorem 4]{Scott}}] \label{thm:scott}
		For every reduced $\Gamma$ with $\pi_\Gamma = \pi_{k,n}$, $\Phi_\Gamma:\Grkndecpos \xrightarrow{\sim} \mathcal A_{\Gamma}$ is a homeomorphism. 	If $\Gamma_1$ and $\Gamma_2$ are related by a move, then
		\begin{equation*} 
			\begin{tikzcd}
				&	\mathcal A_{\Gamma_1}\arrow[dr] \arrow[dd,"\text{move}","\sim"']&\\ \Grkndecpos\arrow[dr,"\Phi_{\Gamma_2}"',"\sim"]\arrow[ur,"\Phi_{\Gamma_1}","\sim"']&&\mathcal A_{\pi_{k,n}}\\& \mathcal A_{\Gamma_2} \arrow[ur] & 
			\end{tikzcd}.
		\end{equation*}
		commutes, so we obtain a well-defined homeomorphism $\Phi: \Grkndecpos \xrightarrow{\sim}  \mathcal A_{\pi_{k,n}}$.
	\end{theorem}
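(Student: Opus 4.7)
The plan is to bootstrap from a single well-chosen reduced graph $\Gamma_0$ with $\pi_{\Gamma_0}=\pi_{k,n}$ to all such reduced graphs using the Postnikov--Thurston move-equivalence theorem, provided one can identify the bipartite moves with the cluster transformations on $\mathcal A_\Gamma$.

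\emph{Step 1: move compatibility.} The contraction/uncontraction move does not alter any face label, so the induced map on $\mathcal A_\Gamma$ is the identity and the corresponding square commutes automatically. For the spider move at a face $f_0$, I would first trace how the four strands crossing the square reroute; one checks that the four neighboring faces $f_1,f_2,f_3,f_4$ keep their labels $S(f_i)$, while $S(f_0)$ and $S(f_0')$ are two distinct $k$-subsets whose symmetric difference is a specific pair of indices determined by the strands. With that combinatorial identification in place, the classical three-term Pl\"ucker relation on $\extp^{k}\C^n$ gives
\[
\Delta_{S(f_0)}(X,v)\,\Delta_{S(f_0')}(X,v) \;=\; \Delta_{S(f_1)}(X,v)\,\Delta_{S(f_3)}(X,v)\;+\;\Delta_{S(f_2)}(X,v)\,\Delta_{S(f_4)}(X,v),
\]
which is precisely the cluster mutation formula for $A_{f_0'}$. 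Positivity on $\Grkndecpos$ guarantees every term here lies in $\Rpos$.

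\emph{Step 2: homeomorphism for a base graph.} For $\pi_{k,n}$, take $\Gamma_0$ to be a standard reduced bipartite graph representing the top positroid cell (for instance the Le-diagram graph). Its face labels are precisely the $k$-subsets indexing rectangular Young diagrams in the $k\times(n-k)$ box. Using Postnikov's parameterization $\Meas_{\Gamma_0}:\mathcal X_{\Gamma_0}\xrightarrow{\sim}\Grknpos$ together with an explicit choice of representative matrix (and of the decoration in $\Grkndec$), I would compute $\Phi_{\Gamma_0}$ as a monomial transformation between the two positive tori $\Grkndecpos\cong \Rpos^{|F(\Gamma_0)|}$ and $\mathcal A_{\Gamma_0}=\Rpos^{F(\Gamma_0)}$; since the monomial matrix is unimodular (the rectangular Pl\"ucker coordinates are a well-known cluster for the top cell), it is a homeomorphism.

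\emph{Step 3: globalization.} By Postnikov--Thurston, every reduced $\Gamma$ with $\pi_\Gamma=\pi_{k,n}$ is connected to $\Gamma_0$ by a finite sequence of moves. Composing the commutative squares from Step 1 along this sequence shows that $\Phi_\Gamma$ equals $\Phi_{\Gamma_0}$ post-composed with the induced cluster homeomorphism $\mathcal A_{\Gamma_0}\xrightarrow{\sim}\mathcal A_\Gamma$, so $\Phi_\Gamma$ is itself a homeomorphism. The same argument shows the triangle in the theorem commutes, so the $\Phi_\Gamma$ glue to a well-defined homeomorphism $\Phi:\Grkndecpos\xrightarrow{\sim}\mathcal A_{\pi_{k,n}}$.

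The main obstacle is Step 1: one has to carefully bookkeep the rerouting of strands under a spider move and identify the six face labels $S(f_0),S(f_0'),S(f_1),S(f_2),S(f_3),S(f_4)$ so that the Pl\"ucker identity above applies with the right signs and so that each $S(f_i)$ is indeed a $k$-subset. Once this is in place, Steps 2 and 3 are essentially formal, and the hypothesis that $\Gamma$ is reduced enters only through Postnikov--Thurston.
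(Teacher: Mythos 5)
The paper itself offers no proof of this statement (it is imported from Scott), so I can only judge your sketch on its own terms. Steps 1 and 3 are the standard argument and are fine in outline: under a spider move all faces except the central one keep their labels, the central label changes from $Sac$ to $Sbd$ where $S(f_1)=Sab$, $S(f_2)=Sbc$, $S(f_3)=Scd$, $S(f_4)=Sda$ for some $(k-2)$-subset $S$ and indices $a<b<c<d$ read off from the four strands through the square, the three-term Pl\"ucker relation is then exactly the mutation formula, contraction/uncontraction changes no labels, and Postnikov--Thurston propagates everything and gives the gluing.

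The genuine gap is Step 2. Once you identify $\Grkndecpos$ with $\Rpos^{\#F(\Gamma_0)}$ via $\Meas_{\Gamma_0}$ (plus the decoration), the coordinate $\Delta_{S(f)}$ of $\Meas_{\Gamma_0}([\wt])$ is the dimer partition function $Z_{S(f)}$, a sum over all almost perfect matchings with boundary $S(f)$ --- not a monomial in the edge or face weights. The paper's own Example~\ref{example:bip} already refutes the claim: the interior face of the $\operatorname{Gr}(2,4)$ graph has label $\{1,3\}$ and $\Delta_{13}=ac+bd$. The map that \emph{is} monomial is $p_{\Gamma_0}:\mathcal A_{\Gamma_0}\to\mathcal X_{\Gamma_0}$, and relating it to $\Phi_{\Gamma_0}$ costs exactly one twist (\cref{thm:MSmain}); your Step 2 in effect conflates $\Phi_{\Gamma_0}$ with (an inverse of) $p_{\Gamma_0}$. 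Consequently unimodularity of an exponent matrix cannot be the reason $\Phi_{\Gamma_0}$ is a homeomorphism, and in particular surjectivity onto $\Rpos^{F(\Gamma_0)}$ --- the hard half of the statement --- is not established. To repair the base case you need either the cluster-algebra route (the face labels of $\Gamma_0$ form a cluster, every Pl\"ucker coordinate is a positive Laurent polynomial in that cluster, and every positive point of the cluster torus is attained), or the twist route: invertibility of $\vec\tau$ on $\Grkndecpos$ (\cref{thm:ms1}) together with the monomial isomorphism induced by $p_{\Gamma_0}$ after accounting for the overall $\Rpos$-scaling exhibits $\Phi_{\Gamma_0}$ as a composite of homeomorphisms. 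Either way, this is substantive input that your sketch currently treats as a computation.
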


	\subsection{Twist} \label{sec:twist}
	We introduce the twist map defined by Marsh and Scott \cite{Marsh} and generalized by Muller and Speyer \cite{MullerSpeyer}. We follow the normalization conventions of \cite{MullerSpeyer}. Let $M$ be a $k\times n$ matrix whose $k \times k$ minors are all nonzero. For any $i \in [n]$, let $M_i$ denote the $i$th column of $M$. We extend this definition to all $i \in \Z$ by defining $M_{i} := M_{\overline i}$ where $\overline i \in [n]$ is the reduction of $i \in \Z$ modulo $n$. Let $\langle \cdot, \cdot \rangle$ denote the standard inner product on $\R^{k}$. 
	\begin{definition}\label{def:twist}
		The \textit{right twist} of $M$ is the $k \times n$ matrix $\vec{\tau}(M)$ whose column $\vec{\tau}(M)_i$ is defined by
		\[
		\langle \vec{\tau}(M)_i, M_i \rangle = 1 ~\text{and}~\langle \vec{\tau}(M)_i, M_j \rangle=0~\text{for}~ i <j \leq i+k-1.
		\]
		Similarly, the \textit{left twist} of $X$ is the $k \times n$ matrix $\cev{\tau}(M)$ whose column $\cev{\tau}(M)_i$ is defined by
		\[
		\langle \cev{\tau}(M)_i, M_i \rangle = 1 ~\text{and}~\langle \cev{\tau}(M)_i, M_j \rangle=0~\text{for}~ i-k+1  \leq j <i.
		\] 
	\end{definition}

	\begin{theorem} [Muller and Speyer, {\cite[Corollary 6.8]{MullerSpeyer}}] \label{thm:ms1}
		Under the identifications (\ref{eq:mattogr}), the right and left twists descend to mutually inverse homeomorphisms of $\widetilde{\operatorname{Gr}}_{>0}(k,n)$ and $\Grknpos$.
	\end{theorem}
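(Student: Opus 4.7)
The plan is to establish three things in sequence: descent from matrices to the (decorated) Grassmannian, the fact that $\vec{\tau}$ and $\cev{\tau}$ are set-theoretic inverses on the locus where certain minors do not vanish, and finally positivity (together with continuity), which takes care of the remaining non-vanishing statement. First I would verify descent by a direct computation. If $g \in \operatorname{GL}_k$ and we replace $M$ by $gM$, the defining equations of $\vec{\tau}(gM)_i$ read $\langle \vec{\tau}(gM)_i, gM_j\rangle = \delta_{ij}$ for $i\le j\le i+k-1$; using $\langle g^{-T}v, gw\rangle = \langle v, w\rangle$, this gives $\vec{\tau}(gM) = g^{-T}\vec{\tau}(M)$. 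Hence the row span of the twist depends only on the row span of $M$, and if moreover $\det g = 1$, then the wedge $\vec{\tau}(M)_1\wedge\cdots\wedge \vec{\tau}(M)_k$ is preserved (it is multiplied by $\det(g^{-T})=1$), which yields a well-defined map on the decorated Grassmannian compatibly with the identifications (\ref{eq:mattogr}).

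Next I would show that $\cev{\tau}\circ\vec{\tau} = \mathrm{id}$ as a formal consequence of the symmetry of the inner product, modulo a non-vanishing hypothesis on certain minors of $\vec{\tau}(M)$. For each $i$, symmetry and the defining relations of $\vec{\tau}(M)$ give $\langle M_i, \vec{\tau}(M)_i\rangle = 1$ and $\langle M_i, \vec{\tau}(M)_j\rangle = 0$ for $i-k+1 \leq j < i$, which are precisely the defining relations of $\cev{\tau}(\vec{\tau}(M))_i$. To conclude $M_i = \cev{\tau}(\vec{\tau}(M))_i$ I need uniqueness of solutions to that $k\times k$ linear system, which is equivalent to the non-vanishing of the $k\times k$ minor of $\vec{\tau}(M)$ on columns $\{i-k+1,\dots,i\}$ (a cyclically consecutive window). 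The same argument with the roles reversed gives $\vec{\tau}\circ\cev{\tau} = \mathrm{id}$.

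The main obstacle and bulk of the work is positivity: showing $\vec{\tau}$ maps $\Grkndecpos$ into itself, which in particular ensures the non-vanishing required above. My approach would be to use the dimer-model framework already assembled in the excerpt. Given $X \in \Grknpos$, pick a reduced bipartite graph $\Gamma$ with $\pi_\Gamma = \pi_{k,n}$ together with a positive edge weight $\wt$ with $\Meas_\Gamma([\wt]) = X$. The goal is to derive a closed formula expressing the Pl\"ucker coordinates $\Delta_I(\vec{\tau}(M))$ as positive Laurent polynomials in the face variables $X_f$, with the especially clean identification $\Delta_{S(f)}(\vec{\tau}(M)) = A_f$ after gauge-fixing $M$ appropriately (so that the face-labelled Pl\"uckers of the twist are exactly the $\mathcal{A}$-cluster variables of $\Gamma$, which matches the diagram $p_\Gamma\circ\Phi_\Gamma\circ\cev\tau = \mathrm{id}$ from the introduction read in reverse). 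Concretely, one fixes a distinguished matching giving a reference Pl\"ucker coordinate (say $\Delta_{\{1,\dots,k\}}$), solves the linear system defining $\vec{\tau}$ by Cramer's rule on consecutive windows, and expands the resulting ratios of minors in terms of the boundary measurements; each boundary measurement is a positive sum over dimer covers, so the output is manifestly positive.

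Finally I would note that the resulting maps are rational with denominators equal to products of consecutive minors, so once positivity of those minors is established (as a consequence of the previous paragraph), both $\vec{\tau}$ and $\cev{\tau}$ are continuous rational bijections on $\Grkndecpos$ (and $\Grknpos$) that are mutually inverse, hence homeomorphisms. The hardest step is the cluster identification in the third paragraph: it requires either a careful direct manipulation of the Cramer's-rule expansion or, more systematically, an induction on the moves of $\Gamma$ using the fact that Scott's face-labelled Pl\"uckers satisfy the cluster mutation formula for the $\mathcal{A}$-variables while the $X_f$ satisfy the $\mathcal{X}$-mutation formula, with the gauge-fixing chosen so that the comparison is preserved under spider and contraction-uncontraction moves.
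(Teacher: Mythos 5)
The paper does not prove this statement; it is imported verbatim from Muller--Speyer \cite[Corollary 6.8]{MullerSpeyer}, so the comparison must be with their argument rather than with anything in this text. Your first two steps are correct and coincide with theirs: the computation $\vec{\tau}(gM)=g^{-T}\vec{\tau}(M)$ gives descent to $\Grkn$ (and, since $\det(g^{-T})=1$ for $g\in\operatorname{SL}_k$, to $\Grkndec$), and the observation that symmetry of $\langle\cdot,\cdot\rangle$ turns the defining relations of $\vec{\tau}(M)$ into exactly the defining relations of $\cev{\tau}(\vec{\tau}(M))_i$ satisfied by $M_i$ is precisely how mutual inverseness is proved, modulo the non-vanishing of the cyclically consecutive minors of $\vec{\tau}(M)$. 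You also order the logic correctly: positivity of $\vec{\tau}(M)$ is what supplies that non-vanishing, so it must be established independently of invertibility.

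The gap is that the positivity step, which you rightly identify as the crux, is not actually carried out: what you describe --- a closed formula expressing every $\Delta_I(\vec{\tau}(M))$ as a positive Laurent polynomial in the face weights, with $\Delta_{S(f)}(\vec{\tau}(M))=A_f$ after gauge-fixing --- is the content of the paper's \cref{thm:MSmain} (Muller--Speyer's Theorem 7.1), a strictly stronger result whose proof occupies most of their paper. Deferring to it is legitimate as a strategy, but it leaves the hardest part as an assertion; in particular you cannot invoke the commutative diagram $p_\Gamma\circ\Phi_\Gamma\circ\cev{\tau}=\Meas_\Gamma^{-1}$ from the introduction even ``read in reverse,'' since applying $\Phi_\Gamma$ to $\cev{\tau}(X)$ already presupposes that $\cev{\tau}(X)$ lands in the positive part. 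The Cramer's-rule expansion of $\vec{\tau}(M)_i$ over the window $\{i,\dots,i+k-1\}$ produces ratios of minors whose positivity is not manifest, and converting it into a sum over matchings (or running the induction on moves you sketch, with the gauge-fixing tracked through spider moves) is a genuine piece of work, not a routine verification. So: correct skeleton, same route as the source, but the proposal proves the easy two-thirds and outsources the essential third.
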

	
	\begin{definition}
		We denote the right twist of $(X,\vect) \in \Grkndecpos$ (resp., $X \in \Grknpos$) by $\vec \tau(X,\vect)$ (resp., $\vec \tau (X)$), and similarly for the left twist.
	\end{definition}
	\begin{example} \label{example:twist}
		The left twist of $X$ in (\ref{eq:matrixX}) is $
		\cev \tau(X) = \text{row span}\begin{bmatrix}
			\frac 1 b & 1&0&-\frac d c\\0& \frac b a & \frac 1 d&1
		\end{bmatrix}.
		$
	\end{example}
	
	\begin{definition}\label{def:atox}
		Let $\Gamma$ be a reduced bipartite graph with $\pi_\Gamma = \pi_{k,n}$. Define the map $p_\Gamma: \mathcal A_\Gamma \ra \mathcal X_\Gamma$ sending $A$ to $[\wt]$ as follows. Let $e \in E(\Gamma)$ be an edge and let $f,g \in F(\Gamma)$ be the two faces incident to $e$. Define
		\[
		\wt (e) := \begin{cases} \frac{1}{A_f A_g} &\text{if $e$ is not incident to a boundary white vertex},\\
			\frac{A_{f_i^-}}{A_f A_g}&\text{if $e$ is incident to boundary white vertex $d_i$},
		\end{cases}
		\]
		where $f_i^-$ is the boundary face of $\Gamma$ between $d_{i-1}$ and $d_{i}$.
	\end{definition}	
	\begin{theorem} \cite[Theorem 7.1 and Remark 7.2]{MullerSpeyer} \label{thm:MSmain}
		Let $\Gamma$ be reduced bipartite graph with $\pi_\Gamma=\pi_{k,n}$. The following diagrams commute.
		\begin{equation} \label{twistp}
			\begin{tikzcd} \mathcal A_\Gamma   \arrow[r,"p_\Gamma"] & \mathcal X_\Gamma \arrow[d,"\sim"',"\Meas_\Gamma"]\\ \widetilde{\operatorname{Gr}}_{>0}(k,n) \arrow[swap]{r}{\vec{\tau}} \arrow[u,"\sim"',"\Phi_\Gamma"]&\Grknpos 
			\end{tikzcd},
			\quad 
			\begin{tikzcd} \mathcal A_\Gamma/\Rpos   \arrow[r,"p_\Gamma","\sim"'] & \mathcal X_\Gamma \arrow[d,"\sim"',"\Meas_\Gamma"]\\ \Grknpos \arrow[u,"\sim"',"\Phi_\Gamma"]\arrow[r,bend right=10,"{\vec{\tau}}"',"\sim"]&\Grknpos\arrow[l,bend right=10,"\sim","{\cev{\tau}}"'] 
			\end{tikzcd}.
		\end{equation}
		In the diagram on the {right}, the quotient is by the action of $\Rpos$ on $\mathcal A_\Gamma$ multiplying all the $A$ variables by a scalar.
	\end{theorem}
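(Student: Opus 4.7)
The plan is to prove the left diagram in~\eqref{twistp} first, after which the right diagram follows formally. The $\Rpos$-action on $\mathcal A_\Gamma$ that uniformly scales all $A_f$ corresponds under $\Phi_\Gamma$ to the action on $\Grkndecpos$ scaling the wedge vector $v$ (each $\Delta_{S(f)}$ rescales by the same factor), and gets absorbed by the boundary measurement: the $X_f$ face coordinates on $\mathcal X_\Gamma$ are invariant under uniform $A$-scaling once the boundary-edge modification $\wt(e) = A_{f_i^-}/(A_f A_g)$ is accounted for in the alternating product around each face. Combining this with the homeomorphism statements for $\Phi_\Gamma$ (\cref{thm:scott}) and $\Meas_\Gamma$ (Postnikov) and with~\cref{thm:ms1} forces $p_\Gamma$ to descend to an isomorphism $\mathcal A_\Gamma/\Rpos \xrightarrow{\sim}\mathcal X_\Gamma$ and identifies $\cev\tau$ as the inverse of $\vec\tau$ on $\Grknpos$, yielding the right diagram.

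For the left diagram, the strategy is to reduce to a single reduced graph per strand-permutation class using move-invariance of the composition $\Meas_\Gamma \circ p_\Gamma \circ \Phi_\Gamma$. Scott's and Postnikov's theorems already give the move-invariance of $\Phi_\Gamma$ and $\Meas_\Gamma$ (modulo the canonical face bijection $F(\Gamma)\to F(\Gamma')$), so the new ingredient is that $p_\Gamma$ intertwines the spider and contraction-uncontraction move homeomorphisms on the two sides. Contraction-uncontraction is immediate since neither $A$- nor $X$-variables change. For the spider move at face $f_0$, the check reduces to substituting the cluster mutation $A_{f_0'} = (A_{f_1}A_{f_3}+A_{f_2}A_{f_4})/A_{f_0}$ into $\wt(e)=1/(A_fA_g)$, computing the alternating-product face variables $X_{f_0'},\dots,X_{f_4'}$ in $\Gamma'$, and matching them to the $\mathcal X$-mutation formulas recorded in the excerpt; this is a direct algebraic verification using the identity $X_{f_0}=A_{f_2}A_{f_4}/(A_{f_1}A_{f_3})$ extracted from the definition of $p_\Gamma$.

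Having reduced to a single reduced $\Gamma$ with $\pi_\Gamma = \pi_{k,n}$, I would choose a convenient representative (for instance, the bipartite graph from Postnikov's network parameterization, whose boundary measurement matrix $M$ has entries that are simple Laurent monomials in face variables) and verify $\Meas_\Gamma(p_\Gamma(\Phi_\Gamma(X,v))) = \vec\tau(X,v)$ directly. On the twist side, the defining equations $\langle \vec\tau(M)_i, M_j\rangle=\delta_{ij}$ for $i\le j\le i+k-1$ yield each column $\vec\tau(M)_i$, and hence each $\Delta_I(\vec\tau(M))$, via Cramer's rule as an explicit rational function in the Pl\"ucker coordinates $\Delta_J(M) = A_{f}$ of $X$. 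On the dimer side, the partition function $Z_I$ expands as a sum of Laurent monomials in the $A_f$'s. The main obstacle is the combinatorial identity matching these two expressions: I would attack it by a Lindstr\"om--Gessel--Viennot style argument on the chosen graph, setting up a bijection between dimer covers $M$ with $\partial M = I$ and the terms appearing in the Cramer expansion of $\Delta_I(\vec\tau(M))$, while tracking the common scalar $\prod_f A_f^{-a_f}$ shared by all matchings with boundary $I$, which supplies the overall normalization needed for the equality in projective coordinates. Move-invariance from the previous paragraph then propagates the identity from the chosen graph to every reduced $\Gamma$ with $\pi_\Gamma=\pi_{k,n}$.
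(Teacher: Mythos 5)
This theorem is not proved in the paper at all: it is quoted verbatim from Muller and Speyer (their Theorem 7.1 and Remark 7.2), so the only fair comparison is with their proof. Your overall architecture is reasonable and partially parallels theirs: deducing the right diagram from the left one via the scalar action and \cref{thm:ms1} is fine, and the reduction to a single reduced graph per move-equivalence class is legitimate once you check that $p_\Gamma$ intertwines the $\mathcal A$- and $\mathcal X$-mutations (a routine computation, as you say) and invoke Postnikov--Thurston move-connectedness together with the move-compatibility of $\Phi_\Gamma$ and $\Meas_\Gamma$.

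The genuine gap is in the single-graph verification, which is where the entire content of the theorem lives. You propose to compute $\Delta_I(\vec{\tau}(M))$ by Cramer's rule from the orthogonality relations and then match it to $Z_I$ term by term via a Lindstr\"om--Gessel--Viennot-style bijection. But the two sides are not sums over a common index set: the Cramer expansion of $\Delta_I(\vec{\tau}(M))$ is a signed alternating sum of products of $k\times k$ minors of $M$ divided by the consecutive minors $\Delta_{[i,i+k-1]}(M)$, whereas $Z_I(p_\Gamma(A))$ is a manifestly positive sum of Laurent monomials in the face variables $A_f=\Delta_{S(f)}$. Passing from one to the other requires nontrivial Pl\"ucker relations, not a bijection between dimer covers and Cramer terms; this is exactly why Muller and Speyer instead prove the sharper statement $\Delta_{S(f)}(\vec{\tau}(\Meas_\Gamma(\wt)))=\wt(M_f)^{-1}$ for a distinguished (downstream) matching $M_f$ attached to each face, via a careful analysis of upstream/downstream matchings, and then conclude because the face labels $\{S(f)\}_{f\in F(\Gamma)}$ form a cluster and hence determine the point of $\Grknpos$. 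Without either that matching analysis or the Marsh--Scott-type Laurent expansion of twisted minors, your plan stalls precisely at the step you flag as ``the main obstacle,'' so the proposal does not yet constitute a proof.
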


	\begin{remark}
		The map $p_\Gamma$ is an incarnation of the canonical map between $\mathcal A$ and $\mathcal X$ cluster varieties in Fock and Goncharov \cite{FockGon}.
	\end{remark}	
	
	\begin{example}
		Recall Examples~\ref{example:bip} and \ref{example:twist}. The Pl\"ucker coordinates of $\cev \tau (X)$ are 
		\[
		\Delta_{12} = \frac 1 a, \Delta_{13}=\frac{1}{bd}, \Delta_{14}=\frac 1 b, \Delta_{23}= \frac 1 d, \Delta_{24}= 1+\frac{bd}{ac}, \Delta_{34}= \frac 1 c.
		\]
		The compositions $p_\Gamma \circ \Phi_\Gamma$ and $p_\Gamma \circ \Phi_\Gamma \circ \cev \tau$ are shown in Figure~\ref{fig:facelabels}(b) and Figure~\ref{fig:facelabels}(c) respectively. The weights in Figure~\ref{fig:bipgraph}(a) and Figure~\ref{fig:facelabels}(c) are easily seen to be gauge equivalent.
	\end{example}
	
	\begin{definition} \label{def:action}
		For $t = (t_1,\dots,t_n)  \in \Rpos^n$ and $X \in \Grpos$, let $t \cdot X \in \Grpos$ denote the point obtained as follows. Let $M$ be a $k \times n$ matrix such that $X$ is the row span of $M$. Then, $t \cdot X$ is the row span of the matrix $t \cdot M$ defined by $(t \cdot M)_i:=t_i M_i$. 	
	\end{definition}

	Let $\Gamma$ be a reduced bipartite graph with $\pi_\Gamma = \pi_{k,n}$ and let $[\wt] \in \mathcal X_\Gamma$. Let $\Rpos^n$ act on $\mathcal X_\Gamma$ by multiplying the weights of all edges incident to $d_i$ by $\frac{1}{t_i}$. The following lemma is used in the proof of~\cref{thm:elecrt}. 
	\begin{lemma} \label{lem:tequiv}
		The map $\Meas_\Gamma:\mathcal X_\Gamma \ra \Grknpos$ is $\Rpos^{n}$ equivariant.
	\end{lemma}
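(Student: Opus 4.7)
The plan is a direct computation comparing the effect of the $\Rpos^n$-action on the boundary measurement formula $\Meas_\Gamma([\wt])=\big[\sum_{I\in\binom{[n]}{k}} Z_I\, e_I\big]$. By multiplicativity of the action in the $t_i$, it suffices to verify equivariance coordinate by coordinate, i.e.\ for $t=(1,\dots,1,t_i,1,\dots,1)$ with a single nontrivial entry at position $i$.

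Fix such a $t$ and a dimer cover $M$ with $\partial M=I$. By the definition of $\partial M$, the boundary vertex $d_i$ is covered by exactly one edge of $M$ in precisely two of the four cases:
\textbf{(i)} $d_i$ is white and $i\notin I$, or
\textbf{(ii)} $d_i$ is black and $i\in I$;
otherwise no edge of $M$ is incident to $d_i$. Under the prescribed action, each edge incident to $d_i$ is rescaled by $t_i^{-1}$ in case (i) and by $t_i$ in case (ii), so $\wt(M)$ is rescaled by a factor $c_{I,i}$ that depends only on $I$ and $i$:
\[
c_{I,i}=\begin{cases} t_i^{-1} & \text{if }d_i\text{ is white and }i\notin I,\\ t_i & \text{if }d_i\text{ is black and }i\in I,\\ 1 & \text{otherwise.}\end{cases}
\]
Hence every summand of $Z_I$ scales by the same factor $c_{I,i}$, and thus $Z_I$ itself scales by $c_{I,i}$.

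The key observation is that, up to a scalar independent of $I$, $c_{I,i}$ agrees with $\prod_{j\in I} t_j$ evaluated at our single-entry $t$. Indeed, if $d_i$ is black then $c_{I,i}=t_i$ exactly when $i\in I$, and equals $1$ otherwise, which is precisely $t_i^{[i\in I]}$, matching $\prod_{j\in I} t_j$ (restricted to our $t$). If $d_i$ is white, then $c_{I,i}=t_i^{-1}$ when $i\notin I$ and $c_{I,i}=1$ when $i\in I$; multiplying by the $I$-independent constant $t_i$ yields $1$ or $t_i$ respectively, again matching $\prod_{j\in I}t_j$. Therefore $[\sum_I Z_I e_I]$ is sent by the action to $[\sum_I (\prod_{j\in I}t_j)\, Z_I e_I]$ in $\mathbb{P}(\extp^k\C^n)$, which by \cref{def:action} and the behavior of Pl\"ucker coordinates under column scaling is exactly $t\cdot \Meas_\Gamma([\wt])$.

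Applying this for each coordinate of $t$ in turn (the two actions are defined entry-by-entry and commute) gives the full equivariance. The only potential subtlety is that the projective equality requires the correction factor $c_{I,i}/\prod_{j\in I}t_j$ to be independent of $I$, which is precisely what the case analysis above verifies; there is no real obstacle beyond bookkeeping.
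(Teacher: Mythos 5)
Your proof is correct and follows essentially the same route as the paper: compute how each $Z_I$ scales under the action (picking up $t_i$ for black $d_i$ with $i\in I$ and $t_i^{-1}$ for white $d_i$ with $i\notin I$), then absorb an $I$-independent factor $\prod_{d_i\ \text{white}} t_i$ to match $\Delta_I(t\cdot X)=(\prod_{i\in I}t_i)\Delta_I(X)$. The only difference is cosmetic — you reduce to one coordinate at a time, while the paper does all coordinates simultaneously — and your case analysis is, if anything, slightly more careful about the inverse on white vertices.
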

	\begin{proof}
		We have $\Delta_I(t \cdot X) = (\prod_{i \in I}t_i) \Delta_I(X)$. On the other hand, \begin{align*}
			\Meas_\Gamma(t \cdot [\wt])&=\left[\sum_{I \in \binom{[n]}{k}} \left(\prod_{i \notin I \mid d_i \in W(\Gamma)}t_i \right)Z_I e_I\right]\\
			&=\left[\sum_{I \in \binom{[n]}{k}} \left(\prod_{i \in I }t_i \right)Z_I e_I\right],
		\end{align*}
		where in the second equality we rescaled by $\prod_{i \in [n] \mid d_i \in W(\Gamma)}t_i$.
	\end{proof}

	The following two properties of the twist will be required later.
	\begin{proposition}[Muller and Speyer, {\cite[(9) in the proof of Proposition 6.6 and Proposition 6.1]{MullerSpeyer}}] \label{prop:msinv}
		Let $X \in \Grknpos$. 
		\begin{enumerate}
			\item For any boundary face $f_i^-$, we have
			$
			\Delta_{S(f_i^-)}(\vec \tau(X))=\frac{1 }{	\Delta_{S(f_i^-)}(X)}.
			$
			\item If $t=(t_1,\dots,t_n) \in \Rpos^n$, then $\vec \tau(t \cdot X) = t^{-1} \cdot \vec\tau(X)$, where $t^{-1}:=(\frac{1}{t_1},\dots,\frac{1}{t_n})$.
		\end{enumerate}
		
	\end{proposition}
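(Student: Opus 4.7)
\emph{Plan.} Both statements follow directly from the defining linear equations for the right twist. Let $M$ be a $k\times n$ matrix with $X$ as row span, so that $\vec\tau(M)$ is characterized columnwise by $\langle \vec\tau(M)_i, M_i\rangle = 1$ and $\langle \vec\tau(M)_i, M_j\rangle = 0$ for $i<j\le i+k-1$. For part (i), fix $i \in [n]$ and form the $k\times k$ matrices
\[
P := \bigl[\,M_i\,\big|\,M_{i+1}\,\big|\,\cdots\,\big|\,M_{i+k-1}\,\bigr], \qquad N := \bigl[\,\vec\tau(M)_i\,\big|\,\cdots\,\big|\,\vec\tau(M)_{i+k-1}\,\bigr].
\]
Reading off the defining relations at each of the indices $i, i+1, \ldots, i+k-1$ shows that $N^T P$ is lower triangular with $1$'s on the diagonal: the $(a,b)$ entry for $a<b$ pairs indices whose difference lies in the orthogonality window $0<b-a\le k-1$ and so vanishes. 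Hence $\det(N^T P)=1$. The determinants $\det P$ and $\det N$ equal the Plücker coordinates $\Delta_{S(f_i^-)}(M)$ and $\Delta_{S(f_i^-)}(\vec\tau(M))$ up to a common sign $\epsilon\in\{\pm1\}$ determined by the cyclic reordering of $(i, i+1, \ldots, i+k-1)\bmod n$ into increasing order. Since $\epsilon^2 = 1$, the claim $\Delta_{S(f_i^-)}(\vec\tau(M))\cdot\Delta_{S(f_i^-)}(M) = 1$ drops out.

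\emph{Part (ii).} Set $M' := t\cdot M$, so $M'_j = t_j M_j$. Substituting into the defining relations for $\vec\tau(M')_i$ and dividing by the positive scalars $t_j$ yields $\langle \vec\tau(M')_i, M_i\rangle = 1/t_i$ and $\langle \vec\tau(M')_i, M_j\rangle = 0$ for $i < j \le i+k-1$. These are exactly the equations satisfied by $t_i^{-1}\vec\tau(M)_i$. Total positivity guarantees $\Delta_{\{i,\ldots,i+k-1\}}(M) > 0$, so $M_i, \ldots, M_{i+k-1}$ form a basis of $\R^k$ and the defining linear system has a unique solution; therefore $\vec\tau(M')_i = t_i^{-1}\vec\tau(M)_i$. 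Column-by-column this gives the matrix identity $\vec\tau(t\cdot M) = t^{-1}\cdot \vec\tau(M)$, which descends to the stated identity on $\Grknpos$.

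\emph{Main obstacle.} The only non-routine step is the sign bookkeeping in part (i): one must verify that the sign relating $\det P$ to $\Delta_{S(f_i^-)}(M)$ agrees with the one relating $\det N$ to $\Delta_{S(f_i^-)}(\vec\tau(M))$. This holds because the sign is determined by the cyclic reordering of the index set alone and is independent of which matrix supplies the columns, so it squares to $1$ in the product $\det(N^TP)$. Apart from this, the argument is a direct unraveling of the definition of $\vec\tau$ together with uniqueness of solutions to the associated linear system.
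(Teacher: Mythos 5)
Your proof is correct. The paper itself gives no argument for this proposition---it is quoted directly from Muller and Speyer---and your derivation (the lower-unitriangular product $N^{T}P$ for part (i), and uniqueness of the solution to the defining linear system for part (ii)) is exactly the standard computation underlying the cited results, so there is nothing substantive to compare.
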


	\section{Electrical networks}	
	
	\begin{figure}[ht]
		\begin{tabular}{ccc}
			\includegraphics[width=0.3\textwidth]{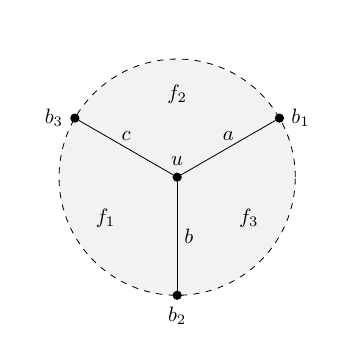} &\includegraphics[width=0.3\textwidth]{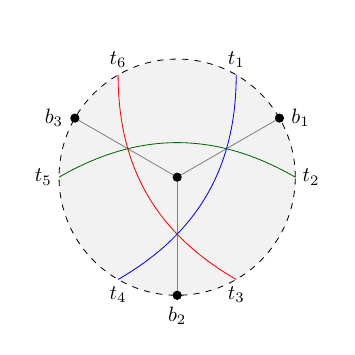}
			&\includegraphics[width=0.3\textwidth]{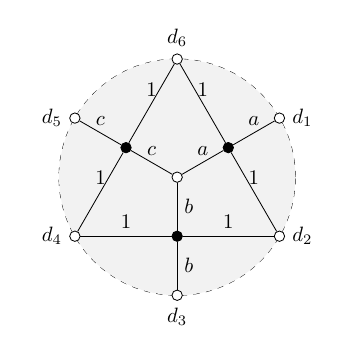}\\
			(a) An electrical network $(G,c)$. &(b) $G^\times$. &(c) $(G_+,[\wt_+])$.
		\end{tabular}
		\caption{\label{fig:facenetwork3} An electrical network with $n=3$ and its associated graphs. The three strands of $G$ are given different colors.}
	\end{figure}	
	
	\subsection{Reduced graphs in the disk}
	\label{sec:redgr}	
	Let $G=(V,E,F)$ be a planar graph embedded in the disk $\disk$ with $n$ vertices on the boundary labeled $b_1,b_2,\dots,b_n$. The \textit{medial graph} $G^\times$ of $G$ is the graph obtained as follows. Place $2n$ vertices of $G^\times$ labeled $t_1,t_2,\dots,t_{2n}$ on the boundary of $\disk$ such that $b_i$ is between $t_{2i-1}$ and $t_{2i}$ and a vertex $v_e$ in the middle of each edge $e$ of $G$. Connect $v_e$ and $v_{e'}$ by an edge if they occur consecutively around a face of $G$. For each $i \in [n]$, connect $t_{2i-1}$ (resp., $t_{2i}$) to $v_e$ if $e$ is the last (resp., first) edge in clockwise order incident to $b_i$. By construction, each $t_i$ has degree $1$ and each $v_e$ degree $4$ in $G^\times$. A \textit{strand} of $G$ is a {maximal walk in $G^\times$ that goes ``straight through" every vertex $v_e$ in it, i.e., if $e_1^\times,e_2^\times$ are two consecutive edges of $G^\times$ in the walk with common vertex $v_e$, then $e_1^\times$ and $e_2^\times$ are opposite each other with respect to the cyclic order of edges around $v_e$ (which makes sense since $v_e$ has degree $4$). Unlike strands in a bipartite graphs, strands in $G$ are unoriented.}
	
	\begin{example}
		Figure~\ref{fig:facenetwork3}(b) shows the medial graph of the electrical network in Figure~\ref{fig:facenetwork3}(a).
	\end{example}

	The graph $G$ is called \textit{reduced} if:
	\begin{enumerate}
		\item Every strand starts and ends at a boundary vertex, i.e., no strand is an internal cycle.
		\item Strands have no self-intersections.
		\item There is no pair of strands that intersect twice.
	\end{enumerate}

	\begin{figure}
		
		\includegraphics[width=0.6\textwidth]{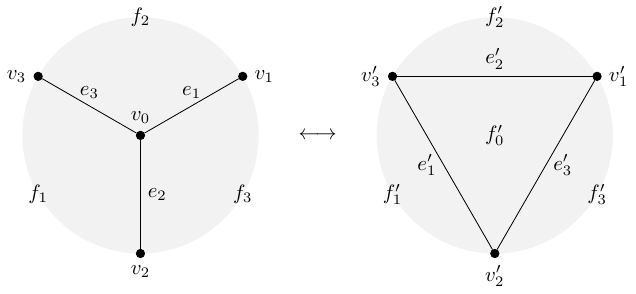}
		
		\caption{\label{fig:ydmove} The Y-$\Delta$ move. }
	\end{figure}
	
	The \textit{medial pairing} of $G$ is the matching on $[2n]$ defined by
	\[
	\tau_G:=\{\{i,j\} \mid \text{there is a strand between $t_i$ and $t_j$} \}.
	\]
	In this paper, we only consider reduced graphs $G$ with medial pairing $\tau_n:=\{\{1,n+1\},\{2,n+2\},\dots,\{n,2n\}\}$; such graphs are called \textit{well connected}. {If $G$ is well connected, since the edges of $G$ are in bijection with crossings of strands and any two strands cross exactly once, $G$ has $\binom n 2$ edges.}
	
\begin{example}
	For the electrical network in Figure~\ref{fig:facenetwork3}(a), the medial graph is shown in Figure~\ref{fig:facenetwork3}(b), from which we see that $G$ is reduced with medial pairing $\tau_3$.
\end{example}
	
	We say that $G$ and $G'$ are \textit{move-equivalent} if they are related by a sequence of Y-$\Delta$ moves (Figure~\ref{fig:ydmove}). A Y-$\Delta$ move $G \rightsquigarrow G'$ induces canonical bijections $V(G) \sqcup F(G) \xrightarrow[]{\sim} V(G') \sqcup F(G')$ and $E(G) \xrightarrow[]{\sim} E(G')$. Two graphs $G$ and $G'$ are move-equivalent if and only if they have the same medial pairing \cite{cdv1}.

	\subsection{The space of electrical networks and the positive Lagrangian Grassmannian}
	\label{sec:gtogplus}
	Let $c:E(G) \ra \Rpos$ be a function called \textit{conductance}, and let $\mathcal R_{G} := \Rpos^{E(G)} {\cong \Rpos^{\binom n 2}}$ be the space of conductances on $G$. A pair $(G,c)$ with $c \in \mathcal R_{G}$ is called an \textit{electrical network}.
	
	A Y-$\Delta$ move $G \rightsquigarrow G'$ induces a homeomorphism $\mathcal R_{G} \xrightarrow[]{\sim} \mathcal R_{G'}$ given by
	\[
	c(e_1') := \frac{c(e_2) c(e_3)}{C},c(e_2') := \frac{c(e_1) c(e_3)}{C},c(e_3') := \frac{c(e_1) c(e_2)}{C},
	\]
	where $C:=c(e_1) c(e_2) + c(e_1) c(e_3) + c(e_2) c(e_3)$ and the edges are labeled as in Figure~\ref{fig:ydmove}, {while the conductances of edges not involved in the Y-$\Delta$ move are unchanged.} Let $\mathcal R_n := \bigsqcup_{\tau_G=\tau_n} \mathcal R_{G}\Big/\text{moves}$ denote the \textit{space of electrical networks}.
	
		\begin{figure}
		\begin{tabular}{ccc}
			\includegraphics[width=0.6\textwidth]{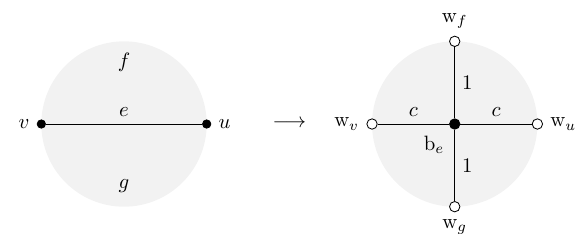} 
		\end{tabular}
		\caption{\label{fig:temperley1} The procedure to obtain $G_+$ from $G$, where $c$ is the conductance of the edge $e$.}
	\end{figure}
	
	The generalized Temperley's bijection of \cite{KPW} associates a dimer model $(G_+,[\wt_+])$ to $(G,c)$ as follows. Place a black vertex $\bv_e$ in the middle of every edge $e$ of $G$, a white vertex $\wv_v$ at every vertex $v$ of $G$, {a white vertex $\wv_f$ in the middle of every internal face $f$, and a white vertex $\wv_f$ in the middle of the intersection of the boundary of $\disk$ with $f$ for every boundary face $f$ of $G$.} If $v$ is a vertex of $G$ incident to edge $e$, draw an edge $\bv_e \wv_v$ and assign $\wt_+(\bv_e \wv_v):=c(e)$. If $f$ is a face of $G$ incident to $e$, draw an edge $\bv_e \wv_f$ and assign $\wt_+(\bv_e \wv_f):=1$ (see Figure~\ref{fig:temperley1}). {$G_+$ has $2n$ boundary white vertices which we label $d_1,\dots,d_{2n}$ in clockwise cylic order as follows:
	\[
	d_{2i-1}:=\wv_{b_i} ~\text{and}~d_{2i}:=\wv_{f_i},
	\]
	where $f_i$ denotes the boundary face between $b_i$ and $b_{i+1}$.}
			\begin{figure}
		\begin{tabular}{ccc}
			\includegraphics[width=0.6\textwidth]{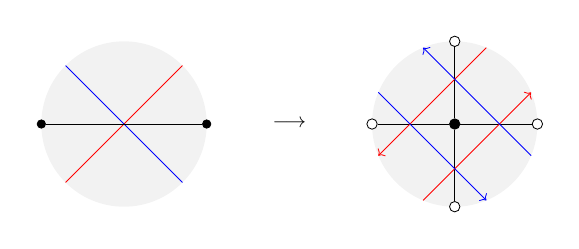} 
		\end{tabular}
		\caption{\label{fig:temperley2} The $1:2$ correspondence between strands in $G$ and strands in $G_+$.}
	\end{figure}
	
	{An Euler characteristic computation shows that $\#W(G_+)-\#B(G_+)=n+1$. Moreover, there is a $1:2$ correspondence between strands in $G$ and strands in $G_+$ (Figure~\ref{fig:temperley2}) which can be seen locally. For $i \in [n]$, let $\beta_i$ denote the strand in $G$ between $t_{i}$ and $t_{n+i}$. Let $\alpha_i$ denote the strand of $G_+$ ending at $d_i^+$. From the local picture in Figure~\ref{fig:temperley2}, we see that $\alpha_i$ starts at $d_{n+i-1}^-$, and that the two strands in $G_+$ that correspond to $\beta_i$ are $\alpha_i$ and $\alpha_{n+i}$.

}
	
	\begin{example}
		The weighted bipartite graph associated to the electrical network in Figure~\ref{fig:facenetwork3}(a) is shown in Figure~\ref{fig:facenetwork3}(c).
	\end{example}	
	
	\begin{remark}
		The notation $G_+$ is inspired by the notation $G_\square$ for the Ising graph in \cite{Galpy}, since we are replacing each edge of $G$ with a $+$.
	\end{remark}

	The map $(G,c) \mapsto (G_+,[\wt_+])$ defines an inclusion $j_G^+: \mathcal R_{G} \hookrightarrow \mathcal X_{G_+}$. 
	
	\begin{proposition} [Goncharov and Kenyon, {\cite[Lemma 5.11]{GK}}]
		If $G$ and $G'$ are related by a Y-$\Delta$ move, then there is a sequence of {moves for bipartite graphs} relating $G_+$ and $G_+'$ making the following diagram commute.
		\[
		\begin{tikzcd} \mathcal R_{G} \arrow[r,hook,"j_{G}^+"]\arrow[d,"\text{Y-$\Delta$ move}"',"\sim"] & \mathcal X_{G_+}\arrow[d,"\text{moves}","\sim"']\\ \mathcal R_{G'} \arrow[r,hook,"j_{G'}^+"] &\mathcal X_{G_+'}
		\end{tikzcd}.
		\]
		Therefore, the inclusions $j_G^+$ glue to an inclusion $j^+:\mathcal R_n \ra \mathcal X_{\pi_{n+1,2n}}$.
	\end{proposition}

	Let $\Omega:\R^{2n} \times \R^{2n} \ra \R$ be the degenerate skew symmetric bilinear form 
	\begin{multline} \Omega(x,y) = 		\sum_{i=1}^n (x_{2i-1} y_{2i} - x_{2i} y_{2i-1}) + \sum_{i=1}^{n-1} (x_{2i+1} y_{{2i}} - x_{{2i}} y_{2i+1}) + (-1)^n (x_1 y_{{2n}} - x_{2n } y_1) .  \label{OmegaFormula} \end{multline}
	We say that $X \in \Gr$ is \textit{isotropic} for $\Omega$ if $\Omega(x,y)=0$ for any $x,y \in X$. Let $\IG$ be the \textit{Lagrangian Grassmannian} of isotropic subspaces inside $\Gr$ and $\IGpos := \IG \cap \Grpos$ the \textit{positive Lagrangian Grassmannian}.
	
	{
	\begin{remark}
		The form $\Omega$ has a two-dimensional kernel which must be contained in every isotropic $(n+1)$-dimensional subspace. Therefore, if we quotient by the kernel, we get that $\IG$ is isomorphic to the Lagrangian Grassmannian $\operatorname{LG}(n-1,2n-2)$. The total positivity structure of $\IGpos$ is also non-standard (see~\cite[Section 5]{CGS} for further discussion).
	\end{remark}
}
	
	 The following result was independently proved by Bychkov, Gorbounov, Kazakov and Talalaev \cite{BGKT} and Chepuri, George and Speyer \cite{CGS}, following earlier results of Lam \cite{Lam}.
	
	\begin{theorem}\label{lamthmmain}
		The composition $\Meas_{G_+} \circ j_G^+:\mathcal R_{G}\xrightarrow[]{\sim}\IGpos$ is a homeomorphism. 
	\end{theorem}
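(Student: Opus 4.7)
The plan is to assemble this in four stages, leveraging Postnikov's and Scott's theorems applied to the bipartite graph $G_+$.

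First I would verify that for a well-connected $G$ (medial pairing $\tau_n$), the bipartite graph $G_+$ is reduced with strand permutation $\pi_{G_+} = \pi_{n+1, 2n}$. This is purely combinatorial: there is a local matching between medial strands of $G$ and zig-zag paths of $G_+$. A strand of $G_+$ crossing an edge $\bv_e \wv_v$ or $\bv_e \wv_f$ is routed through the ``$+$''-structure at $\bv_e$ in a way that follows the two medial strands of $G$ passing through $e$. Because $\tau_G = \tau_n$ pairs $t_i$ with $t_{i+n}$ (indices mod $2n$), the induced zig-zag endpoints on the boundary $d_1, \dots, d_{2n}$ realize the permutation $i \mapsto i + n + 1$, i.e.\ $\pi_{n+1, 2n}$ (the precise shift depends on conventions about black/white boundary vertices of $G_+$ and is fixed by one careful case check). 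With this identified, Postnikov's theorem quoted above yields $\Meas_{G_+} : \mathcal X_{G_+} \xrightarrow{\sim} \Grpos$.

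Second I would describe $j_G^+(\mathcal R_G)$ inside $\mathcal X_{G_+}$. By construction $\wt_+$ puts conductances on the ``vertex-edges'' $\bv_e \wv_v$ and $1$ on the ``face-edges'' $\bv_e \wv_f$, so $j_G^+$ realizes $\mathcal R_G$ as a coordinate slice of $\mathcal X_{G_+}$. The dimer partition functions $Z_I$ on $G_+$ can then be computed by the Kenyon--Propp--Wilson generalized Temperley bijection: dimer covers of $G_+$ biject with pairs of compatible spanning forests (``groves'') on $G$, and after removing the trivial weight contributions from face-edges, the resulting $Z_I$ become (up to a common overall factor) Laplacian minors of $G$ expressible in terms of the entries of the response matrix $L$.

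The main obstacle is the third step: showing that the image lies in $\IGpos$, i.e.\ verifying $\Omega$-isotropy of the row span $X$ of $M := \Meas_{G_+}(j_G^+(c))$ (read as a point of $\operatorname{Gr}$). Isotropy is equivalent to a collection of linear three-term relations among Plücker coordinates $\Delta_I(X)$ coming from $\Omega(Me_i, Me_j) = 0$ for all $i, j$; using the explicit form \eqref{OmegaFormula} these relations pair up cyclically consecutive indices with one anti-cyclic correction from the $(-1)^n$ wrap-around term. I would substitute the grove formulas for $Z_I$ and check these relations by bijections between grove collections (equivalent to the Kirchhoff-type identities on Laplacian minors that underlie the characterization of response matrices), as done in \cite{Lam, BGKT, CGS}. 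An alternative route, which I would pursue in parallel, is to argue abstractly: the image is a closed subvariety of $\Grpos$ of dimension $\#E(G) = \binom{n+1}{2} = \dim \IGpos$ sitting inside the linear slice cut by the $\Omega$-isotropy equations, so once one shows the image is contained in that linear slice (which is really one linear relation per generator of $\Omega^\wedge \subset \bigwedge^2 (\R^{2n})^*$) and has the expected dimension, the equality follows.

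Finally, for the homeomorphism claim: continuity of $\Meas_{G_+} \circ j_G^+$ is immediate. Injectivity reduces to the classical result of Curtis--Ingerman--Morrow that on a well-connected graph the conductances are recoverable from the response matrix, combined with the identification of the response matrix with the projective image. For surjectivity onto $\IGpos$, both spaces are connected manifolds of the same dimension $\binom{n+1}{2}$, and the map is proper because degeneration of conductances to $0$ or $\infty$ corresponds to boundary strata of $\IGpos$ (which lie outside the positive part); invariance of domain then gives that the map is an open and closed embedding, hence a homeomorphism.
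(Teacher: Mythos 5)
This theorem is not proved in the paper: it is quoted as a known result of Bychkov--Gorbounov--Kazakov--Talalaev \cite{BGKT} and Chepuri--George--Speyer \cite{CGS}, building on Lam \cite{Lam}, so there is no in-paper argument to compare your proposal against. Your outline does track the strategy of those references fairly closely: identifying $G_+$ as a reduced graph with strand permutation $\pi_{n+1,2n}$, realizing $\mathcal R_G$ as a coordinate slice of $\mathcal X_{G_+}$, computing the $Z_I$ as grove partition functions via the generalized Temperley bijection, and verifying the linear Pl\"ucker relations expressing $\Omega$-isotropy. Injectivity via Curtis--Ingerman--Morrow is also the standard route.

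Two points need attention. First, a numerical slip: for a well-connected $G$ with $n$ boundary vertices, $\#E(G)=\binom{n}{2}=\dim\IGpos$, not $\binom{n+1}{2}$ (the paper states this explicitly when computing $\dim\OGpos-\dim\IGpos=n+1$); your dimension count still balances, but the common value is wrong. Second, and more substantively, your surjectivity argument via properness and invariance of domain presupposes that $\IGpos$ is a connected topological manifold of dimension $\binom{n}{2}$. Neither connectedness nor the manifold structure of $\IG\cap\Grpos$ is free: in the literature these facts are typically \emph{consequences} of the surjectivity of the parameterization, obtained there by identifying $\IGpos$ with the space of response matrices of well-connected networks (\cite[Theorem 1.8]{CGS}) and then invoking the Colin de Verdi\`ere/Curtis--Ingerman--Morrow characterization of which matrices arise as response matrices. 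Without an independent proof of connectedness of $\IGpos$, the invariance-of-domain route risks circularity, so you should either supply that or follow the response-matrix characterization as in \cite{CGS}. The properness claim itself (that degenerating conductances forces the image out of every compact subset of $\IGpos$) also deserves an actual argument, e.g.\ exhibiting a ratio of grove partition functions that degenerates.
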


Therefore, we have a commuting diagram
	\begin{equation*} 
		\begin{tikzcd} \mathcal R_G  \arrow[r,hook,"j_{G}^+"]\arrow[d,"\sim","\Meas_{G_+} \circ j_G^+"'] & \mathcal X_{G_+} \arrow[d,"\sim"',"\Meas_{G_+}"]\\ \IGpos \arrow[r,hook] &\Grpos
		\end{tikzcd}.
	\end{equation*}

	\subsection{A bit of representation theory of the spin group}
	
	In this section, we give a brief background on the spin group, mostly following \cite[Chapter 20]{FH} and \cite[Section 5]{HenriquesSpeyer}, and prove~\cref{prop:delx} relating Cartan and Pl\"ucker coordinates. Consider the nondegenerate symmetric bilinear form $Q:\C^{2n} \times \C^{2n} \ra \C$ defined by
	\[
	Q(x,y):=\frac 1 2 \sum_{i=1}^{n} (-1)^{i-1} (x_i y_{n+i} + x_{n+i} y_i).
	\]
	
	We first make a change of basis so that $Q$ becomes the standard nondegenerate symmetric bilinear form. Let $W$ denote the Lagrangian subspace $\Span (e_1, e_2,\dots,e_n)$. We have an isomorphism 
	\begin{align}
		W^\perp &\ra W^\vee \nonumber \\
		e_{n+i} &\mapsto  (-1)^{i-1}  e_{i}^\vee, \label{eq:isom}
	\end{align}
	where $W^\vee$ denotes the dual vector space of $W$ and $e_i^\vee$ is basis vector dual to $e_i$, i.e., $e_i^\vee(e_j)=\delta_{ij}$.
	This gives rise to an isomorphism $\C^{2n} \cong W \oplus W^\vee$ such that the inner product $Q$ becomes
	\begin{equation}\label{formqpos}
		Q((x,x^\vee),(y,y^\vee))=\frac 1 2 (x^\vee(y)+y^\vee(x))~\text{where}~(x,x^\vee), (y,y^\vee) \in W \oplus W^\vee.
	\end{equation}
	Note that our form $Q$ agrees with \cite{HenriquesSpeyer} and differs from the standard form in \cite{FH} by a factor of $\frac 1 2$.
	
	Let $\Cl(Q):=\bigoplus_{k=0}^\infty (\C^{2n})^{\otimes k}/\langle x \otimes x - Q(x,x)\rangle$ denote the \textit{Clifford algebra}. Since the ideal $\langle x \otimes x - Q(x,x)\rangle$ is generated by elements of even degree, the Clifford algebra has a $\Z/2\Z$ grading: $\Cl(Q)=\Cl(Q)^{\text{even}} \oplus \Cl(Q)^{\text{odd}}$.
	
	The \textit{Clifford group} \[\operatorname{Cl}^*(Q):=\{x \in \Cl(Q) \mid \text{there exists $y \in \Cl(Q)$ such that $x \otimes y = y \otimes x=1$}\}\]
	is the multiplicative group of units inside $\Cl(Q)$. Its Lie algebra $\cl^*(Q)$ is $\Cl(Q)$ with the Lie bracket $[x,y]:=x \otimes y-y \otimes x$, and we have the exponential map $\exp: \cl^*(Q) \ra \Cl^*(Q)$ defined by
	\begin{equation} \label{exp:cliff}
		\exp(x) := \sum_{n \geq 0} \frac{x^{\otimes n}}{n!}.
	\end{equation}

	The Clifford algebra has an anti-involution $u \mapsto u^*$ called \textit{conjugation} defined by $(x_1 \otimes \cdots \otimes x_r)^*:=(-1)^r x_r \otimes \cdots \otimes x_1.$ The involution $\alpha:\Cl(Q) \ra \Cl(Q)$ defined by $\alpha(x_1 \otimes \cdots \otimes x_r):=(-1)^r (x_1 \otimes \cdots \otimes x_r)$ is called the \textit{main involution}. The \textit{pin} and \textit{spin groups} are defined as  
	\begin{align*}
		\mathrm{Pin}(Q)&:=\{x \in \Cl^*(Q): x \otimes x^* =1 \text{ and }\alpha(x) \otimes \C^{2n} \otimes x^* \subseteq  \C^{2n}\},\\
		\Spin(Q)&:=\{x \in \Cl^*(Q) \cap \Cl(Q)^{\text{even}}: x \otimes x^* =1 \text{ and }\alpha(x) \otimes \C^{2n} \otimes x^* \subseteq  \C^{2n}\}.
	\end{align*}
	The map $\rho:\mathrm{Pin}(Q) \ra \operatorname{O}(Q)$ (resp., $\rho:\Spin(Q) \ra \operatorname{SO}(Q)$) defined by $x \mapsto \rho(x)$ where $\rho(x):\C^{2n} \ra \C^{2n}$ is the endomorphism $v \mapsto \alpha(x) \otimes v \otimes x^*$ makes $\mathrm{Pin}(Q)$ (resp., $\Spin(Q)$) a double cover of $\operatorname{O}(Q)$ (resp., $\operatorname{SO}(Q)$).
	
	The Lie algebra of $\operatorname{SO}(Q)$ is 
	\[\mathfrak{so}(Q):=\{X \in \operatorname{End}(\C^{2n}) \mid Q(X(v),w)+Q(v,X(w))=0~\text{for all $v,w \in \C^{2n}$}\}.\]
	The map $\varphi: \extp^2 \C^{2n} \ra \mathfrak{so}(Q)$ sending $a \wedge b$ to $\varphi_{a \wedge b}$ given by 
	\begin{equation}
		\varphi_{a \wedge b}(v) := 2 (Q(b,v)a-Q(a,v)b) \label{eq:varphi}
	\end{equation}
	is an isomorphism of Lie algebras. On the other hand, the map $\psi:\extp^2 \C^{2n} \ra \cl^*(Q)$ sending $a \wedge b$ to $a \otimes b-Q(a,b)$ is a map of Lie algebras. 
	
	\begin{lemma}\cite[Lemma 20.7 and Exercise 20.33]{FH} \label{fhlemma}
		The composition $\psi \circ \varphi^{-1} : \mathfrak{so}(Q) \ra \Cl(Q)^{\rm{even}}$ is an embedding of Lie algebras. The embedded image is the Lie algebra $\mathfrak{spin}(Q)$ of $\Spin(Q)$.
	\end{lemma}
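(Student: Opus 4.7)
The plan is to prove the lemma in three steps: first verify that $\psi \circ \varphi^{-1}$ is an embedding of Lie algebras (that is, show $\psi$ is injective, since $\varphi$ is already stated to be a Lie algebra isomorphism and $\psi$ a Lie algebra map); second, characterize $\mathfrak{spin}(Q)$ intrinsically inside $\Cl(Q)^{\text{even}}$; and third, verify that the image of $\psi \circ \varphi^{-1}$ is precisely this subspace. Injectivity of $\psi$ is immediate: the Clifford relation $x \otimes y + y \otimes x = 2Q(x,y)$ gives $\psi(a \wedge b) = \tfrac{1}{2}(a \otimes b - b \otimes a)$, so $\psi$ agrees up to a scalar with the canonical embedding of $\extp^2 \C^{2n}$ into the degree-$2$ filtered piece of $\Cl(Q)$, which is well known to be injective.

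To characterize $\mathfrak{spin}(Q)$, I would differentiate the two defining conditions of $\Spin(Q)$ at the identity. Writing $x = 1 + t y + O(t^2)$ with $y \in \Cl(Q)^{\text{even}}$, the relation $x \otimes x^* = 1$ linearizes to $y + y^* = 0$, and using $\alpha(y) = y$ on even elements together with $y^* = -y$, the relation $\alpha(x) \otimes v \otimes x^* \in \C^{2n}$ linearizes to $[y, v] \in \C^{2n}$ for all $v \in \C^{2n}$. Hence $\mathfrak{spin}(Q) = \{y \in \Cl(Q)^{\text{even}} : y + y^* = 0,\ [y,v] \in \C^{2n}\ \text{for all}\ v\}$. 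For $y = \psi(a \wedge b) = a \otimes b - Q(a,b)$ I would then verify both conditions: the formula $(a \otimes b)^* = b \otimes a = 2Q(a,b) - a \otimes b$ gives $y^* = -y$; and commuting $v$ past $a \otimes b$ using the Clifford relation twice yields
\[
[y, v] = [a \otimes b, v] = 2\bigl(Q(b,v) a - Q(a,v) b\bigr) = \varphi_{a \wedge b}(v) \in \C^{2n}.
\]
Thus the image of $\psi$ lies in $\mathfrak{spin}(Q)$.

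The same calculation shows $d\rho \circ \psi = \varphi$, where $\rho: \Spin(Q) \ra \SO(Q)$ is the standard double cover and its derivative acts on degree-one elements by $y \mapsto [y, -]$. Since $\rho$ is a finite cover, $d\rho: \mathfrak{spin}(Q) \xrightarrow{\sim} \mathfrak{so}(Q)$ is a vector space isomorphism, so $\psi \circ \varphi^{-1}$ agrees with $(d\rho)^{-1}$ and is in particular a bijection onto $\mathfrak{spin}(Q)$. The hard part will be keeping the sign and normalization conventions straight: the conjugation $*$, the main involution $\alpha$, the factor $\tfrac{1}{2}$ in the definition of $Q$, and the defining formula $\rho(x)v = \alpha(x) \otimes v \otimes x^*$ must all be threaded through consistently so that the Clifford commutator $[a \otimes b, v]$ reproduces $\varphi_{a \wedge b}(v)$ as given in \eqref{eq:varphi} without stray factors or signs.
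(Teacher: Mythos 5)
The paper does not actually prove this lemma; it is quoted verbatim from Fulton--Harris (Lemma 20.7 and Exercise 20.33), so there is no in-paper argument to compare against. Your proposal is the standard proof underlying that citation, and its computations are correct under the paper's conventions: $\psi(a\wedge b)=\tfrac12(a\otimes b-b\otimes a)$ follows from the polarized Clifford relation $x\otimes y+y\otimes x=2Q(x,y)$, the identity $(a\otimes b)^*=b\otimes a$ gives $\psi(a\wedge b)^*=-\psi(a\wedge b)$, and $[a\otimes b,v]=2\bigl(Q(b,v)a-Q(a,v)b\bigr)=\varphi_{a\wedge b}(v)$ reproduces \eqref{eq:varphi} with no stray factors (the factor of $2$ there already absorbs the $\tfrac12$ in the paper's $Q$). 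The identity $d\rho\circ\psi=\varphi$ together with $d\rho$ being an isomorphism on $\mathfrak{spin}(Q)$ (as $\rho$ is a finite cover) then forces $\psi\circ\varphi^{-1}=(d\rho)^{-1}$, hence a Lie algebra isomorphism onto $\mathfrak{spin}(Q)$.

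One step is asserted more strongly than you justify it. Differentiating the defining equations of $\Spin(Q)$ along a curve $x=1+ty+O(t^2)$ only gives the inclusion $\mathfrak{spin}(Q)\subseteq\{y\in\Cl(Q)^{\mathrm{even}}: y+y^*=0,\ [y,\C^{2n}]\subseteq\C^{2n}\}$, but your conclusion ``thus the image of $\psi$ lies in $\mathfrak{spin}(Q)$'' uses the \emph{reverse} inclusion, which you state as an equality without proof. The gap is easily closed by exponentiation, and the two properties you verify for $y=\psi(a\wedge b)$ are exactly what is needed: $y^*=-y$ gives $\exp(ty)^*=\exp(-ty)=\exp(ty)^{-1}$, so $x\otimes x^*=1$; and since $y$ is even, $\alpha(\exp(ty))=\exp(ty)$, so $\alpha(x)\otimes v\otimes x^*=\exp(ty)\otimes v\otimes\exp(ty)^{-1}=e^{t\,\mathrm{ad}_y}(v)$, which lies in $\C^{2n}$ because $\mathrm{ad}_y$ preserves $\C^{2n}$. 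Hence $\exp(ty)\in\Spin(Q)$ for all $t$ and $y\in\mathfrak{spin}(Q)$. With that patch the argument is complete.
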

	
	Let $S:=\extp^\bullet W$. Define the $\Cl(Q)$ representation $\Gamma: \Cl(Q) \ra  \text{End}(S)$ by 
	\begin{align*}
		\Gamma_x (w_1 \wedge \cdots \wedge w_k)&:=x \wedge (w_1 \wedge \cdots \wedge w_k) \text{ for }{x} \in W,\\
		\Gamma_{x^\vee} (w_1 \wedge \cdots  \wedge w_k)&:=x^\vee \lrcorner ( w_1 \wedge \cdots \wedge w_k) \text{ for } x^\vee \in W^\vee,
	\end{align*}
{where $x^\vee \lrcorner ( w_1 \wedge \cdots \wedge w_k):=\sum_{i=1}^k (-1)^{i-1} x^\vee(w_i) w_1 \wedge \cdots \wedge \widehat{w_i} \wedge \cdots \wedge w_k$}. This is an isomorphism $\Cl(Q) \cong \text{End}(S)$. Let $S_+ := \extp^{\text{even}} W$ {and} $S_- :=\extp^{\text{odd}} W$. Restricting $\Gamma$, we obtain an isomorphism 
	\[\Gamma:\Cl(Q)^\text{even} \xrightarrow[]{\cong} \text{End}(S_+) \oplus \text{End}(S_-).\]
	
	The embedding $\Spin(Q) \subset \Cl(Q)^{\text{even}}$ makes $S_\pm$ into $\Spin(Q)$ representations, called \textit{half-spin representations.}
	
	For $j \in [n]$, let $c_j(t):=t e_j \otimes e_{j}^\vee + t^{-1} e_{j}^\vee \otimes e_j$, and for $t=(t_1,\dots,t_n) \in (\C^\times)^n$, let 
	\begin{equation} \label{maxtor}
		c(t) := \prod_{j=1}^n c_j(t_j).
	\end{equation}
	The image of $c$ is the \textit{maximal torus} inside $\Spin(Q)$ and under the covering $\rho:\Spin(Q) \ra \SO(Q)$, we get  $\rho(c(t))=\text{diag}(t_1^2,\dots,t_n^2,t_1^{-2},\dots,t_n^{-2})$ (see \cite[Equation~(23.7)]{FH}; the factor of $\frac 1 2$ disappears due to our convention for $Q$).

	$1$ (resp., $e_1$) is a highest weight vector of $S_+$ (resp. $S_-$) with weight $(-1,-1,\dots,-1)$ (resp., $(1,-1,\dots,-1)$). For $I,I^\vee \subseteq [n]$ such that $\# I + \# I^\vee=n+1$, let $e_{I,I^\vee}$ denote wedge product indexed by $I \sqcup I^\vee$: If $I=\{i_1<i_2<\cdots <i_k\}$ and $I^\vee=\{j_1<j_2<\cdots < j_{n+1-k}\}$, then $e_{I,I^\vee}:=e_{i_1} \wedge \cdots \wedge e_{i_k}\wedge e_{j_1}^\vee \wedge \cdots \wedge e_{j_{n+1-k}}^\vee$. Since $\extp^{n+1}\C^{2n}$ is an irreducible $\Spin(Q)$ representation with highest weight vector $e_{\{1\},[n]}$ with weight $(0,-2,\dots,-2)$ {(where $x \in \Spin(Q)$ acts as the special orthogonal transformation $\rho(x)$)}, $\extp^{n+1}\C^{2n}$ is a direct summand of $S_+ \otimes S_-$. Let $p:\extp^{n+1} \C^{2n} \hookrightarrow S_+ \otimes S_- $ denote the morphism of $\mathrm{Spin}(Q)$ representations sending 
	$e_{1,[n]}$ to $(-1)^{ \sum_{j \in [n]} (j-1)} 1 \otimes e_1$.
	Let $\sigma(I)$ be $-1$ if $\# I \equiv 2~\text{modulo}~4$ and $1$ otherwise.
	
	\begin{proposition}\label{prop:delx}
		Suppose $I,I^\vee \subseteq [n]$ are such that $\# I + \# I^\vee=n+1$ and $ I \cap I^\vee = \{l\}$. Then, 
		\begin{align}
			(-1)^{\sum_{j \in I^\vee } (j-1)} p(e_{I,I^\vee})	&= \sigma(I) \sigma(I \setminus \{l\})e_{I} \otimes e_{I \setminus \{l\}}~\text{ if $\#I$ is even, and } \nonumber\\
			(-1)^{\sum_{j \in I^\vee } (j-1)} p(e_{I,I^\vee})	&=\sigma(I) \sigma(I \setminus \{l\})	e_{I \setminus \{l\}} \otimes e_I ~\text{ if $\#I$ is odd.} \label{ptoeii}
		\end{align}
	\end{proposition}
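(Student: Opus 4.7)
The plan is to exploit that $p$ is a $\Spin(Q)$-equivariant embedding between two irreducible representations, so by Schur's lemma it is determined (up to scalar) by its value on any single weight vector. The given normalization $p(e_{1,[n]}) = (-1)^{\sum_{j \in [n]}(j-1)}\, 1 \otimes e_1$ fixes that scalar, and I would verify \eqref{ptoeii} by propagating from this base case using the $\mathfrak{so}(Q)$-action, checking that the proposed formula defines a $\Spin(Q)$-equivariant map.

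Concretely, I would use the off-diagonal root vectors $\varphi_{e_i \wedge e_j^\vee} \in \mathfrak{so}(Q)$ which, by \eqref{eq:varphi}, act on $\C^{2n}$ by $e_k \mapsto \delta_{jk}\, e_i$ and $e_k^\vee \mapsto -\delta_{ik}\, e_j^\vee$. By the Leibniz rule these act on $\extp^{n+1} \C^{2n}$ sending a basis element $e_{I,I^\vee}$ with $\#I + \#I^\vee = n+1$ and $I \cap I^\vee = \{l\}$ to a signed sum of basis elements of the same shape (with one entry of $I$ or $I^\vee$ swapped), and it is straightforward to check that iteration reaches every such $e_{I,I^\vee}$ from $e_{1,[n]}$. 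In parallel, by \cref{fhlemma} the same root vectors act on $S_+ \otimes S_-$ via $\psi(e_i \wedge e_j^\vee) = e_i \otimes e_j^\vee - \tfrac{1}{2}\delta_{ij} \in \Cl(Q)^{\mathrm{even}}$; under the representation $\Gamma$ and the Leibniz rule on tensor products, this amounts to wedging with $e_i$ on one slot and contracting with $e_j^\vee$ on the other. An inductive match of the two actions then yields \eqref{ptoeii}.

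The main obstacle will be the sign bookkeeping, where the following contributions must interlock correctly: (i) Koszul signs from reordering wedge factors to canonicalize $e_{I,I^\vee}$; (ii) the factor $(-1)^{i-1}$ in the basis change \eqref{eq:isom} between $e_{n+i}$ and $e_i^\vee$; (iii) the $(-1)^r$ from the main involution $\alpha$ and conjugation. The function $\sigma$ (which, correcting the apparent typo, equals $-1$ exactly when $\#I \equiv 2 \pmod 4$) should emerge as the periodicity-$4$ Koszul sign arising when crossing the $\Z/2$-grading boundary of $S_+ \oplus S_-$: each application of a root vector shifts $\#I$ by one and flips which of $S_\pm$ the target lies in. The dichotomy between the two cases in \eqref{ptoeii} then simply tracks whether $e_I \in S_+$ (when $\#I$ is even) or $e_I \in S_-$ (when $\#I$ is odd), fixing the order of the tensor factors.

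A cleaner alternative I would pursue in parallel is to realize $p$ directly via Clifford multiplication, using the composition
\[
\extp^{n+1} \C^{2n} \hookrightarrow \Cl(Q) \cong \operatorname{End}(S_+ \oplus S_-) \longrightarrow \operatorname{Hom}(S_{\mp},\, S_{\pm}) \cong S_{\pm} \otimes S_{\mp}^{\vee} \cong S_+ \otimes S_-,
\]
where the middle arrow projects onto the component dictated by the parity of $n+1$ and the last isomorphism uses a $\Spin$-equivariant self-pairing on $S_\mp$. Under this description $p(e_{I,I^\vee})$ is literally the Clifford product $e_{i_1} \cdots e_{i_k} \cdot e_{j_1}^\vee \cdots e_{j_{n+1-k}}^\vee$ acting on $S$, and the anti-commutation relations $e_i e_j + e_j e_i = 0$, $e_i^\vee e_j^\vee + e_j^\vee e_i^\vee = 0$, $e_i e_j^\vee + e_j^\vee e_i = \delta_{ij}$, combined with the single-element intersection $I \cap I^\vee = \{l\}$, would produce the rank-one terms $e_I \otimes e_{I\setminus\{l\}}$ (or $e_{I\setminus\{l\}} \otimes e_I$) and make the sign $\sigma(I)\sigma(I\setminus\{l\})(-1)^{\sum_{j \in I^\vee}(j-1)}$ transparent.
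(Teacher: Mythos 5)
Your high-level strategy --- pin down $p$ by irreducibility and the normalization at $e_{1,[n]}$, then propagate to general $e_{I,I^\vee}$ by equivariance with careful sign tracking --- is exactly the paper's; the paper simply uses group elements (lifts to $\Pin(Q)$ of products of two reflections, via Cartan--Dieudonn\'e) rather than Lie algebra elements, and runs an induction on $\#I$. But your concrete plan has a gap that stops it at the first step beyond $\#I=1$: the root vectors $\varphi_{e_i\wedge e_j^\vee}$ you single out act by $e_k\mapsto\delta_{jk}e_i$ and $e_k^\vee\mapsto-\delta_{ik}e_j^\vee$, hence preserve both $\#I$ and $\#I^\vee$. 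Starting from $e_{1,[n]}$, which has $\#I=1$ and $I^\vee=[n]$, iterating these operators only reaches the vectors $e_{\{l\},[n]}$; no $e_{I,I^\vee}$ with $\#I\geq 2$ is accessible. You must also use the root vectors $\varphi_{e_i\wedge e_j}$ with both factors in $W$, which by \eqref{eq:varphi} send $e_k^\vee\mapsto\delta_{jk}e_i-\delta_{ik}e_j$ and $e_k \mapsto 0$, raising $\#I$ by one and lowering $\#I^\vee$ by one; on the spinor side $\psi(e_i\wedge e_j)=e_i\otimes e_j$ acts by wedging with $e_i\wedge e_j$. (The paper's inductive step uses the group element $w_k\otimes w_l$ for the analogous purpose.)

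Two further errors in the sign bookkeeping plan. First, the action of $\psi(e_i\wedge e_j^\vee)=e_i\otimes e_j^\vee-\tfrac12\delta_{ij}$ on $S_+\otimes S_-$ is not ``wedge with $e_i$ on one slot and contract with $e_j^\vee$ on the other''; it is the Leibniz sum over the two slots of the single-slot operator $\Gamma_{e_i}\circ\Gamma_{e_j^\vee}-\tfrac12\delta_{ij}$ (contract, then wedge, within the \emph{same} tensor factor). Second, elements of $\mathfrak{spin}(Q)\subset\Cl(Q)^{\mathrm{even}}$ preserve the parity grading of $S$ and change $\#I$ by $0$ or $2$, so $\sigma$ cannot arise as a Koszul sign from ``each application of a root vector shifting $\#I$ by one and flipping which of $S_\pm$ the target lies in.'' In the paper $\sigma$ comes out of an explicit evaluation of $w_k\otimes w_l$ on wedge products combined with the identity $\sigma(I_0)\sigma(I\cup\{l\})=(-1)^{m+1}$. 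Your Clifford-multiplication alternative is plausible in outline (Schur does force any nonzero equivariant composite to be proportional to $p$ on the multiplicity-one summand $\extp^{n+1}\C^{2n}$), but the quantization map sends $e_{I,I^\vee}$ to the stated Clifford product only up to lower-degree corrections coming from the single non-orthogonal pair $e_l,e_l^\vee$, and the resulting endomorphism of $S$ is not rank one, so the claim that the rank-one terms and the sign become ``transparent'' is not yet justified.
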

	
	\begin{proof}
		We will use the action of $\Spin(Q)$ to send $e_{1,[n]}$ to $e_{I,I^\vee}$ and use $\Spin(Q)$ equivariance of $p$. The main difficulty will be in keeping track of the signs.
		
		We start by defining the required elements of $\Spin(Q)$. By the Cartan–Dieudonné theorem \cite[Theorem 2.7]{spingeometry}, any element of $\operatorname{O}(Q)$ can be written as a product of reflections, so we look for appropriate reflections. If $w \in V$ with $Q(w,w)=-1$ and $R_w$ is the reflection in the hyperplane orthogonal to $w$, then $w \in \Pin(Q)$ and $\rho(w)=R_w$. Let $u_{jk}:=\frac i {\sqrt{2}} (e_j-e_k + e_j^\vee-e_{k}^\vee)$ and $v_{jk}:=\frac 1 {\sqrt{2}} (e_j-e_k - e_j^\vee+e_k^\vee)$ so that $Q(u_{jk},u_{jk})=Q(v_{jk},v_{jk})=-1$. A computation shows that the composition $R_{v_{jk}} \circ R_{u_{jk}}$ is in $\operatorname{SO}(Q)$ and is the transformation $e_j \longleftrightarrow e_k, e_j^\vee \longleftrightarrow e_k^\vee.
		$ Let $w_j:= e_j - e_j^\vee$, so that $Q(w_j,w_j)=-1$. The composition $R_{w_j} \circ R_{w_k} \in \operatorname{SO}(Q)$ is the transformation $e_j \longleftrightarrow e_j^\vee, e_k \longleftrightarrow e_k^\vee$. The transformations $R_{v_{jk}} \circ R_{u_{jk}}$ and $R_{w_j} \circ R_{w_k}$ have lifts $v_{jk} \otimes u_{jk}$ and $ w_j \otimes w_k$ to $\Spin(Q)$ respectively.
		
		Now, we proceed by induction on $m := \# I$. When $m=1$, we have $I=\{l\}$ and $I^\vee = [n]$ for some $l \in [n]$. Suppose $l \neq 1$. {Since $v_{1l} \otimes u_{1l}$ acts on $\extp^{n+1} \C^{2n}$ as the special orthogonal transformation $\rho(v_{1l} \otimes u_{1l})= R_{v_{1l}} \circ R_{u_{1l}}$,} we have
		\begin{align*}
			v_{1l} \otimes u_{1l} \cdot e_{1,[n]} &=
			R_{v_{1l}} \circ R_{u_{1l}} (e_{1,[n]})\\
			&= e_l \wedge e_l^\vee \wedge e_2^\vee \wedge \cdots \wedge e_{l-1}^\vee \wedge e_1^\vee \wedge e_{l+1}^\vee \wedge \cdots \wedge e_n^\vee\\
			&=-e_{l,[n]},
		\end{align*}
		where the $-1$ arises when we reorder the alternating tensor. Next, we compute the action on $S_+ \otimes S_-$. {We have 
	\begin{align*}
	v_{1l} \otimes u_{1l} \cdot 1&=\Gamma_{v_{1l}} \circ \Gamma_{u_{1l}}(1)\\
	&= \Gamma_{v_{1l}} \circ \frac{i}{\sqrt{2}} (\Gamma_{e_1} -\Gamma_{e_l}+\Gamma_{e_1^\vee}-\Gamma_{e_l^\vee}) (1)\\
	&=\frac{i}{\sqrt 2} \Gamma_{v_{1l}}(e_1 \wedge 1-e_l \wedge 1+e_1^\vee(1)-e_l^\vee(1))\\
	&=\frac{i}{\sqrt 2} \Gamma_{v_{1l}}(e_1-e_l)\\
	&=\frac{i}{2} (\Gamma_{e_1} -\Gamma_{e_l} -\Gamma_{e_1^\vee} + \Gamma_{e_l^\vee}) (e_1 -e_l)\\
&= \frac i 2 (e_1 \wedge e_1 - e_1 \wedge e_l-e_l \wedge e_1 + e_l \wedge e_l -e_1^\vee(e_1)+e_1^\vee(e_l)+e_l^\vee(e_1)-e_l^\vee(e_l))\\
	&= -i 1,
\end{align*}	}	
		and similarly,
		\begin{align*}
			v_{1l} \otimes u_{1l} \cdot e_1 &= \Gamma_{v_{1l}} \left( \frac{i}{\sqrt{2}} (1-e_l \wedge e_1)\right)\\
			&=-i e_l.
		\end{align*}
		Therefore, $v_{1l} \otimes u_{1l} \cdot (1 \otimes e_1) = - 1  \otimes e_l$. By $\Spin(Q)$ equivariance of $p$, for all $l \in [n]$, we have
		\begin{align*}
			p (e_{l,[n]})&= p(-v_{1l} \otimes u_{1l} \cdot e_{1,[n]})\\&=-v_{1l} \otimes u_{1l} \cdot p (e_{1,[n]})\\
			&= -v_{1l} \otimes u_{1l} \cdot (-1)^{ \sum_{j \in [n]} (j-1)}  1 \otimes e_1\\
			&=  (-1)^{ \sum_{j \in [n]} (j-1)} 1 \otimes e_l.
		\end{align*}
		Since $\sigma(I\setminus \{l\})=\sigma(\varnothing)=1$ and $\sigma(I)=\sigma(\{l\})=1$, we get (\ref{ptoeii}) for this case. 
		
		Now suppose $m =\#I> 1$. Let $k$ be the largest element of $I \setminus \{l\}$. {Define $I_0 := I \setminus \{k\}$ and $I_0^\vee:=I \cup \{k\}$ so that we have $\# I_0 + \# I_0^\vee=n+1$ and $I_0 \cap I_0^\vee = \{l\}$}. By a careful computation, we obtain 
		\begin{align}
			R_{{w_k}} \otimes R_{w_l} (e_{I_0,I_0^\vee})&=(-1)^{(k-1)+m} e_{I,I^\vee}, \nonumber \\
			w_{k} \otimes w_{l} \cdot e_{I_0 \setminus \{l\}}&=(-1)^{m-1+\#\{ j \in I_0 \mid j < l\}} e_{I}, \nonumber \\
			w_{k} \otimes w_{l} \cdot e_{I_0}&=(-1)^{m-2+\#\{ j \in I_0 \mid j < l\}} e_{I \setminus\{l\}}. \label{eq:Rwk}
		\end{align}
		 {	
		Since $I_0^\vee = I^\vee \cup \{k\}$, we have 
		\begin{equation} \label{eq:sign:1}
		(-1)^{\sum_{j \in I_0^\vee } (j-1) } = (-1)^{\sum_{j \in I^\vee  } (j-1) + (k-1)}.
		\end{equation}
Since $\# (I_0 \setminus \{l\}) = \# I-2$, we have
\[
\{ \#(I_0 \setminus \{l\})~\text{modulo}~4, \#I~\text{modulo}~4\} =\begin{cases}
	\{0,2\}~\text{if $m$ is even, and}\\
	\{1,3\}~\text{if $m$ is odd},
\end{cases}
\] 
which implies that $ \sigma(I_0 \setminus \{l\}) \sigma(I)=(-1)^{m+1}$. Using this and $\# I_0 = \#(I \setminus \{l\})$, we get \begin{equation}\label{eq:sign:2}
	\sigma(I_0)\sigma(I_0 \setminus \{l\}) \sigma(I) \sigma(I \setminus \{l\}) = \sigma(I_0 \setminus \{l\}) \sigma(I)=(-1)^{m+1}.
	\end{equation} 
	
}

		Assume $m=\#I$ is even so that $\#I_0$ is odd. By the induction hypothesis, 
		\[
		(-1)^{\sum_{j \in I_0^\vee } (j-1)} p(e_{I_0,I_0^\vee})	=\sigma(I_0) \sigma(I_0 \setminus \{l\})	e_{I_0 \setminus \{l\}} \otimes e_{I_0}. 
		\]
		By (\ref{eq:Rwk}) and $\Spin(Q)$ equivariance of $p$,
		\[
		(-1)^{\sum_{j \in I_0^\vee } (j-1)} p((-1)^{(k-1)+m} e_{I,I^\vee})= -\sigma(I_0) \sigma(I_0\setminus\{l\}) e_I \otimes e_{I \setminus \{l\}}.	
		\]
Using (\ref{eq:sign:1}) and (\ref{eq:sign:2}), we get $(-1)^{\sum_{j \in I^\vee } (j-1)} p(e_{I,I^\vee})	= \sigma(I) \sigma(I \setminus \{l\})e_{I} \otimes e_{I \setminus \{l\}}$. The case when $\#I$ is odd is almost identical.
	\end{proof}
	
	\subsection{The positive decorated orthogonal Grassmannian} \label{sec:og}
	
	In this section, we define the orthogonal Grassmannian and its Cartan embedding; for further background, see \cite{chevalley, bkp, HenriquesSpeyer}.
	
	For a subspace $U$ of $V$, let $U^\perp:=\{ {x} \in V \mid  Q(x, y)=0 \text{ for every }y \in U\}$ denote its orthogonal complement. A subspace $U$ is said to be \textit{isotropic} (resp., \textit{coisotropic}) for $Q$ if $U \subseteq U^\perp$ (resp., $U^\perp \subseteq U$). Let $\operatorname{OG}(n,2n)$ denote the orthogonal Grassmannian of isotropic $n$ dimensional subspaces. Then $\operatorname{OG}(n,2n)=\operatorname{OG}_+(n,2n) \sqcup \operatorname{OG}_-(n,2n)$ has two irreducible components, where $\operatorname{OG}_+(n,2n)$ (resp., $\operatorname{OG}_-(n,2n)$) is the $\Spin(Q)$ orbit of $\Span(e_{n+1},\dots,e_{2n})$ (resp., $\Span(e_1,e_{n+2},e_{n+3},\dots,e_{2n})$).
	We have $\Spin(Q)$ equivariant embeddings $\operatorname{Ca}_\pm:\operatorname{OG}_{\pm}(n,2n) \hookrightarrow \mathbb P(S_\pm)$, called \textit{Cartan embeddings}, defined by 
	\[
	\Span(e_{n+1},\dots,e_{2n}) \mapsto {[1]} ~\text{and}~\Span(e_1,e_{n+2},e_{n+3},\dots,e_{2n}) \mapsto {[e_1]}~\text{respectively},
	\]
{where as usual, $[x]$ denotes the projectivization of $x$.} Let $\OG$ denote the {orthogonal Grassmannian} of coisotropic $(n+1)$-dimensional subspaces. Given $X \in \OG$, there are two maximal isotropic subspaces $X_{\pm} \in \operatorname{OG}_{\pm}(n,2n)$ contained in $X$. The composition
	\[
	\begin{tikzcd}[contains/.style = {draw=none,"\in" description,sloped}]
		\OG \arrow[r,hook] &\operatorname{OG}_+(n,2n) \times \operatorname{OG}_-(n,2n) \arrow[r,hook,"{\operatorname{Ca}_+\times \operatorname{Ca}_-}"] &\mathbb P(S_+) \times \mathbb P(S_-) \\
		X \ar[u,contains] \ar[r,mapsto] &(X_+,X_-)\arrow[u,contains] \arrow[r,mapsto]& (\text{Ca}_+(X_+),\text{Ca}_-(X_-))\arrow[u,contains]
	\end{tikzcd}
	\]
	defines a $\Spin(Q)$ equivariant embedding $\operatorname{Ca}:\OG \hookrightarrow \mathbb P(S_+) \times \mathbb P(S_-)$. Let \[\widetilde{\operatorname{OG}}(n+1,2n):=\{(X,s_+,s_-) \mid X \in \OG, s_{\pm} \in \operatorname{Ca}_\pm(X_{\pm})\}\] denote the \textit{decorated orthogonal Grassmannian}. Then, we have an embedding $\OGdec \hookrightarrow S_+ \times S_-$ sending $(X,s_+,s_-)$ to $(s_+,s_-)$.

	Recall that $\sigma(I)$ is defined to be $-1$ if $\# I \equiv 2~\text{modulo}~4$ and $1$ otherwise.
	The coefficients $\Sigma_I(X,s_+,s_-)$ of $\sigma(I)e_I$ in $(s_+,s_-)$ are called \textit{Cartan coordinates}. Consider the bihomogeneous equations
	\begin{equation} \label{cuberecsign}
		\Sigma_{I \cup \{j,l\}} \Sigma_{I \cup \{k\}}=\Sigma_I \Sigma_{I \cup\{j,k,l\}} + \Sigma_{I \cup \{j,k\}} \Sigma_{I \cup\{l\}} + \Sigma_{I \cup \{k,l\}} \Sigma_{I \cup \{j\}},
	\end{equation}
	for $j<k<l$.
	\begin{theorem}[Henriques and Speyer, {\cite[Theorem 5.3]{HenriquesSpeyer}}] \label{thm:HSeqns}
		The image of $\OGdec$ in $S_+ \times S_-$ is the subvariety cut out by all the equations (\ref{cuberecsign}).
	\end{theorem}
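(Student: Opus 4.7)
The plan is to establish both inclusions of $\OGdec \subseteq V \subseteq S_+ \times S_-$, where $V$ denotes the subvariety cut out by the equations~(\ref{cuberecsign}).

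For the forward inclusion, the idea is to reduce the cube-recurrence equations to Pl\"ucker-type relations via the $\Spin(Q)$-equivariant map $p$. Given $(X, s_+, s_-) \in \OGdec$, the Pl\"ucker image $\omega_X \in \extp^{n+1}\C^{2n}$ of $X$ satisfies $p(\omega_X) = s_+ \otimes s_-$ under a compatible normalization of the decoration, because $\extp^{n+1}\C^{2n}$ sits inside $S_+ \otimes S_-$ as a $\Spin(Q)$ subrepresentation (containing the highest weight vector $e_{1,[n]}$), and $\OG$ is characterized inside $\operatorname{Gr}(n+1,2n)$ by the condition that its Pl\"ucker image lies in this subrepresentation. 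Proposition~\ref{prop:delx} then expresses each Pl\"ucker coordinate $\Delta_{I \cup I^\vee}(X)$ (for $I \cap I^\vee = \{l\}$ and $\#I + \#I^\vee = n+1$) as a signed product $\pm \Sigma_I \cdot \Sigma_{I \setminus \{l\}}$ of two Cartan coordinates of opposite parities. Organizing the eight products appearing in~(\ref{cuberecsign}) into such Pl\"ucker-coordinate products on a coisotropic subspace of the appropriate dimension, the cube recurrence is then a three-term Pl\"ucker relation on $\operatorname{Gr}(n+1,2n)$ in disguise; the characteristic sign pattern of~(\ref{cuberecsign}) (positive on the right) emerges from cancellations among $\sigma(\cdot)$, $(-1)^{\sum_{j \in I^\vee}(j-1)}$, and the alternating Pl\"ucker signs.

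For the reverse inclusion, I would use dimension counting together with irreducibility. Since $\OGdec$ is an affine cone over the irreducible projective variety $\OG$, it is irreducible of known dimension. The equations~(\ref{cuberecsign}) admit a combinatorial interpretation as a propagation rule (the cube recurrence of Propp~\cite{Propp}, studied combinatorially in Carroll-Speyer~\cite{CS}): small-cardinality Cartan coordinates serve as initial seeds, and~(\ref{cuberecsign}) expresses large-cardinality coordinates rationally in terms of the seeds. Counting the combinatorial degrees of freedom for the seed data matches $\dim \OGdec$, bounding $\dim V \le \dim \OGdec$. Together with the forward inclusion and the irreducibility of $\OGdec$ this forces $V = \OGdec$ (identifying $\OGdec$ as a top-dimensional irreducible component and ruling out extraneous components by dimension).

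The main obstacle is the sign bookkeeping in the forward direction. Three independent sign sources -- the $\sigma(I)$ normalization of Cartan coordinates, the signature of the Pl\"ucker antisymmetrization, and the prefactor $(-1)^{\sum_{j \in I^\vee}(j-1)}$ from Proposition~\ref{prop:delx} -- must combine precisely so as to produce only positive signs in~(\ref{cuberecsign}); a single sign slip anywhere would invalidate the argument. A secondary difficulty is ensuring that the coisotropy condition $X^\perp \subseteq X$ is used correctly in rewriting the Pl\"ucker relations, since only coisotropic subspaces have the factorization property $p(\omega_X) = s_+ \otimes s_-$ that makes the Cartan-to-Pl\"ucker dictionary unambiguous.
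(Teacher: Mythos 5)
The paper does not prove this statement: it is imported verbatim from Henriques--Speyer \cite[Theorem 5.3]{HenriquesSpeyer}, so there is no internal proof to compare against and your proposal must stand on its own. As written, the forward inclusion does not go through by the mechanism you describe. \cref{prop:atob} converts a product $\Sigma_A\Sigma_B$ into a Pl\"ucker coordinate of $\eta(X,s_+,s_-)$ only under the hypothesis $\#(J\cap J^\vee)=1$, i.e.\ only when one index set is the other with a single element added. Every monomial in (\ref{cuberecsign}) is of the form $\Sigma_A\Sigma_B$ with $A$ and $B$ differing in exactly the three elements $j,k,l$ (e.g.\ $\Sigma_{I\cup\{j,l\}}\Sigma_{I\cup\{k\}}$), so none of them is a Pl\"ucker coordinate; moreover a three-term Pl\"ucker relation would become quartic in the $\Sigma$'s after substituting $\Delta=\Sigma\cdot\Sigma$, whereas (\ref{cuberecsign}) is quadratic. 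What (\ref{cuberecsign}) actually encodes --- already visible for $n=3$, where $S_+\otimes S_-\cong \extp^{4}\C^{6}\oplus \C$ and the single equation is the vanishing of the component of $s_+\otimes s_-$ in the one-dimensional complement of $\extp^{4}\C^{6}$ --- is that the linear functional defined by the equation annihilates the subrepresentation of $S_+\otimes S_-$ in which $s_+\otimes s_-$ lives, together with the purity conditions on $s_\pm$ individually. Verifying this requires computing the projections of the basis vectors $e_A\otimes e_B$ with $\#(A\,\triangle\,B)=3$, which is a genuinely new representation-theoretic computation not covered by \cref{prop:delx}.

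The reverse inclusion also has a gap. The propagation argument (solve for large-cardinality $\Sigma$'s from seed data via the cube recurrence) only controls the locus of $V$ where the relevant denominators are nonzero; components of $V$ contained in the vanishing locus of those denominators are invisible to it, so the bound $\dim V\le\dim\OGdec$ is not actually established. Even granting that bound, the conclusion fails: $\OGdec\subseteq V$ irreducible with $\dim V\le\dim\OGdec$ only shows that $\OGdec$ is one irreducible component of $V$; it does not rule out further components of equal or smaller dimension, and ``ruling out extraneous components by dimension'' is precisely the step that has no argument behind it. A correct proof needs either an identification of the span of the equations (\ref{cuberecsign}) with the full degree-$(1,1)$ part of the ideal as a $\Spin(Q)$-subrepresentation together with the classical fact that the Cartan embedding is cut out by such quadrics, or else an explicit argument producing a point of $\OGdec$ from an arbitrary solution of the equations, including degenerate ones. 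Neither ingredient is present in your sketch.
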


{

\begin{remark}
The actual statement of \cite[Theorem 5.3]{HenriquesSpeyer} is that the image of $\OG$ in $\mathbb P(S_+)\times \mathbb P(S_-)$ is the closed subvariety defined by the bihomogeneous equations (\ref{cuberecsign}), but this implies Theorem~\ref{thm:HSeqns} because $(s_+,s_-) \neq (0,0)$ is in the image of $\OGdec$ in $S_+ \times S_-$ if and only if $([s_+],[s_-])$ is in the image of $\OG$ in $\mathbb P(S_+)\times \mathbb P(S_-)$. 
\end{remark}

}
	
	Consider the $\Spin(Q)$ equivariant map $\eta:\OGdec \ra \widetilde{\operatorname{Gr}}(n+1,2n)$ defined by $(\Span(e_1,e_1^\vee,\dots,e_n^\vee),1,e_1) \mapsto (\Span(e_1,e_1^\vee,\dots,e_n^\vee),e_{1,[n]})$.

	\begin{remark}
		The maps $\eta:\OGdec \ra \widetilde{\operatorname{Gr}}(n+1,2n)$ and $S_+ \times S_- \ra S_+ \otimes S_-$ are not embeddings but they become embeddings upon projectivization. 
	\end{remark}

	Given $I \in \binom{[2n]}{n+1}$, let 
	\begin{equation} \label{eq:jjdual}
		J:=I \cap [n]~\text{and}~  J^\vee:=\{i-n \mid i \in I \cap [n+1,2n]\}.
	\end{equation}
	Under the change of basis (\ref{eq:isom}), $e_I$ becomes $(-1)^{\sum_{j \in J^\vee} (j-1)}e_{J,J^\vee}$. The following proposition relates Pl\"ucker and Cartan coordinates.
	\begin{proposition} \label{prop:atob}
		Let $(X,s_+,s_-) \in \OGdec$, let $(X,\vect)= \eta(X,s_+,s_-)$, and let $J,J^\vee$ be defined as in (\ref{eq:jjdual}). If $\#(J \cap J^\vee)=1$, then \[\Delta_I(X,\vect) = \Sigma_J(X,s_+,s_-) \Sigma_{[n] \setminus J^\vee}(X,s_+,s_-).\]
	\end{proposition}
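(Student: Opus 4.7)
The plan is to transport the identity into $S_+ \otimes S_-$ via the embedding $p$, where Proposition~\ref{prop:delx} applies directly. The key structural step is to show that, up to a universal sign coming from the normalization of $p$, the composition $p \circ \eta$ agrees with the outer product map $(X, s_+, s_-) \mapsto s_+ \otimes s_-$. I would establish this first: both maps are $\Spin(Q)$-equivariant $\OGdec \to S_+ \otimes S_-$, and they agree at the distinguished point $(X_0, 1, e_1)$ (where $X_0 = \Span(e_1, e_1^\vee, \dots, e_n^\vee)$) by the normalization $p(e_{1,[n]}) = (-1)^{\sum_{j \in [n]}(j-1)}(1 \otimes e_1)$. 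Since the $\Spin(Q)$-orbit of this base point is dense in $\OGdec$ (the maximal torus scales $s_\pm$ via the two independent highest weights of $S_\pm$), equality extends throughout $\OGdec$.

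Next, I would expand $v$ in the $W \oplus W^\vee$ basis. By the change of basis~(\ref{eq:isom}), the standard basis vector $e_I \in \extp^{n+1}\C^{2n}$ satisfies $e_I = (-1)^{\sum_{j \in J^\vee}(j-1)} e_{J, J^\vee}$, so
\[ v = \sum_{I'} (-1)^{\sum_{j \in J'^\vee}(j-1)}\, \Delta_{I'}(X, v)\, e_{J', J'^\vee}. \]
Applying $p$ termwise, Proposition~\ref{prop:delx} identifies each summand with $\#(J' \cap J'^\vee) = 1$ as an explicit signed pure tensor in $S_+ \otimes S_-$.

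To conclude, I would extract the coefficient of $e_J \otimes e_{[n] \setminus J^\vee}$ in $p(v)$ (assuming $\#J$ is even; the odd case is parallel and looks at $e_{[n]\setminus J^\vee}\otimes e_J$ instead). A weight comparison under the maximal torus of $\Spin(Q)$ forces the unique contributing summand in the expansion to be $(J', J'^\vee) = (J, J^\vee)$: summands with $\#(J' \cap J'^\vee) > 1$ have the wrong weight to reach this tensor and hence do not contribute. Using the identity $J \setminus \{l\} = [n] \setminus J^\vee$ (immediate from $J \cup J^\vee = [n]$ and $J \cap J^\vee = \{l\}$), the sign $(-1)^{\sum_{j \in J^\vee}(j-1)}$ from the basis change cancels the same factor appearing in Proposition~\ref{prop:delx}, so the coefficient of $e_J \otimes e_{[n] \setminus J^\vee}$ coming from the expansion becomes $\Delta_I(X, v)\,\sigma(J)\,\sigma([n]\setminus J^\vee)$. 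Matching against the coefficient of the same tensor in $s_+ \otimes s_-$, namely $\sigma(J)\,\Sigma_J\,\sigma([n]\setminus J^\vee)\,\Sigma_{[n]\setminus J^\vee}$, yields the claimed identity after the $\sigma$-factors cancel.

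The main obstacle will be sign bookkeeping: verifying that the factor $(-1)^{\sum_{j \in J^\vee}(j-1)}$ from the basis change cancels cleanly with the corresponding factor in Proposition~\ref{prop:delx}, that the $\sigma$-signs match on both sides, and that the density argument needed in the structural step (showing that $\Spin(Q)$ moves the base point across a dense open subset of $\OGdec$) is correctly set up. The weight argument isolating a single summand is routine but requires care in tracing through the identifications used in Proposition~\ref{prop:delx}.
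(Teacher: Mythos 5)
Your proposal is correct and follows essentially the same route as the paper: the paper's proof is exactly the commutative diagram identifying $p\circ\pl\circ\eta$ with $(X,s_+,s_-)\mapsto s_+\otimes s_-$, followed by comparing the coefficient of $\sigma(J)\sigma([n]\setminus J^\vee)\,e_J\otimes e_{[n]\setminus J^\vee}$ on both sides via \cref{prop:delx}. You merely supply details the paper leaves implicit (equivariance plus a dense-orbit argument for the commutativity, and the torus-weight argument showing that only the $(J,J^\vee)$ summand contributes), which are correct.
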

	
	\begin{proof}
		Consider the following commutative diagram 
		\[
		\begin{tikzcd}
			\OGdec \arrow[d,"\eta"'] \arrow[rr,hook] && S_+ \times S_- \arrow[d]\\
			\widetilde{\operatorname{Gr}}(n+1,2n) \arrow[r,hook] & \extp^{n+1}\C^{2n} \arrow[r,hook,"p"] & S_+ \otimes S_-
		\end{tikzcd}.
		\]
		Let $(X,s_+,s_-) \in \OGdec$ and let $(X,\vect) = \eta(X_,s_+,s_-)$. The coefficient of $ \sigma(J) \sigma([n] \setminus J^\vee)e_J \otimes e_{[n] \setminus J^\vee}$ in $s_+ \otimes s_-$ is $\Sigma_J(X,s_+,s_-) \Sigma_{[n] \setminus J^\vee}(X,s_+,s_-)$. Using \cref{prop:delx}, and commutativity of the diagram, we get that this coefficient is also equal to $\Delta_{I}(X,\vect)$.

	\end{proof}
	
	\begin{definition}\label{def:og}
		Let $\OGpos$ denote the subset of $\OGdec$ where all the Cartan coordinates are positive, which we call the \textit{positive decorated orthogonal Grassmannian}.
		Let $\operatorname{OG}_{>0}(n+1,2n)$ denote the \textit{positive orthogonal Grassmannian}, the image of $\OGpos$ under the projection $\OGdec \ra \OG$, or equivalently, the subset of $\operatorname{OG}(n+1,2n)$ where the ratio of any two Cartan coordinates of the same parity is positive.
	\end{definition}

	\begin{example} \label{example:matrix}
		Given $(X,s_+,s_-) \in \OGpos$, \cref{prop:atob} lets us write down a matrix whose row span is $X$. For example, let $n=3$ and let $(X,s_+,s_-) \in \widetilde{\operatorname{OG}}_{>0}(4,6)$ be such that $(\Sigma_J(X,s_+,s_-))_{J \subseteq [3]}=(\Sigma_J)_{J \subseteq [3]}$. Then,
		\[
		X = \text{row span}		\begin{bmatrix}
			\Sigma_\varnothing \Sigma_1 & {\Sigma_\varnothing \Sigma_2} & \Sigma_\varnothing \Sigma_3 & 0&0&0 \\
			0 & \frac{\Sigma_{12}}{\Sigma_\varnothing }& \frac{\Sigma_{13}}{\Sigma_\varnothing }&1&0&0\\
			0& - \frac{\Sigma_{12}\Sigma_2}{\Sigma_\varnothing \Sigma_1 }&-\frac{\Sigma_{12} \Sigma_{23}+\Sigma_{12}\Sigma_3}{\Sigma_\varnothing \Sigma_1}&0&1&0\\
			0&\frac{\Sigma_\varnothing \Sigma_{13}+\Sigma_{12}\Sigma_3}{\Sigma_\varnothing \Sigma_1}&\frac{\Sigma_{13}\Sigma_3}{\Sigma_\varnothing \Sigma_1}&0&0&1
		\end{bmatrix},
		\]
		where $\Sigma_2 = \frac{\Sigma_\varnothing \Sigma_{123}+\Sigma_1 \Sigma_{23}+\Sigma_{12}\Sigma_3}{\Sigma_{13}}$.
	\end{example}

	\subsection{Pfaffian formulas for Cartan coordinates} \label{sec:pfformulas}
	The main result of this section is~\cref{prop:pfaffian} expressing each Cartan coordinate as the Pfaffian of a certain matrix. Let $A=(a_{ij})$ be a $2n \times 2n$ skew symmetric matrix. Let $\omega_A:=\sum_{1 \leq i<j \leq 2n } a_{ij} e_i \wedge e_j$ denote the associated alternating form. The pfaffian $\pf(A)$ of $A$ is defined by the formula 
	\[
	\frac{1}{n!}\omega_A^{\wedge n}=\pf(A) e_1 \wedge \cdots \wedge e_{2n},
	\]
	{where $\omega_A^{\wedge n} $ denotes the wedge product of $n$ copies of $\omega_A$}. For $I \subseteq [2n]$, let $A_I^I$ denote the principal submatrix of $A$ with rows and columns indexed by $I$. 
	\begin{lemma}[{\cite[Chapter 5, Equation (3.6.3)]{procesi}}]\label{lem:exppfaffian}
		We have 
		\[
		\exp({\omega_A})=\sum_{I \subseteq [2n] \mid \# I~\text{is even}} \pf (A_{I}^I) e_I.
		\]
	\end{lemma}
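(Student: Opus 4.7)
The plan is to expand $\exp(\omega_A)$ as the power series in the exterior algebra $\bigwedge^\bullet \C^{2n}$, extract the coefficient of each $e_I$, and identify this coefficient with $\pf(A_I^I)$ by reducing to the defining formula for the Pfaffian. The second equality will then follow from the standard convention that the Pfaffian of an odd-dimensional skew symmetric matrix vanishes (so only even $\#I$ contribute).

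First, I observe that $\omega_A \in \bigwedge^2 \C^{2n}$ lies in even degree, so $\omega_A^k \in \bigwedge^{2k}\C^{2n}$ and $\exp(\omega_A)=\sum_{k\ge 0} \omega_A^k/k!$ is a finite sum (terminating at $k=n$). The coefficient of $e_I$ is therefore nonzero only when $\#I$ is even, which already gives the equivalence of the two sums in the statement (once one knows Pfaffians of odd-sized skew matrices are zero). Fix $k$ and a subset $I \subseteq [2n]$ with $\#I=2k$. The key point is that $\omega_A^k/k!$ is purely multilinear in the $e_i \wedge e_j$ appearing in $\omega_A$, so the coefficient of $e_I$ only depends on those terms $a_{ij} e_i \wedge e_j$ with $i,j \in I$. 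In other words, if $\iota_I : \Span(e_i : i \in I) \hookrightarrow \C^{2n}$ is the inclusion and we pull back, then $\iota_I^* \omega_A = \omega_{A_I^I}$, and the coefficient of $e_I$ in $\omega_A^k/k!$ equals the coefficient of $\bigwedge_{i \in I} e_i$ in $\omega_{A_I^I}^k/k!$.

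Now apply the definition of the Pfaffian to the $2k \times 2k$ skew symmetric matrix $A_I^I$: by the defining identity,
\[
\frac{1}{k!}\omega_{A_I^I}^k = \pf(A_I^I) \bigwedge_{i \in I} e_i,
\]
so the coefficient of $e_I$ in $\omega_A^k/k!$ is exactly $\pf(A_I^I)$. Summing over all $k$ and all $2k$-subsets $I$, together with the convention $\pf(A_J^J)=0$ when $\#J$ is odd, yields both claimed equalities.

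The only thing requiring any care is the first reduction step, i.e., justifying that only diagonal blocks of $A$ matter for the $e_I$-coefficient. This is not really an obstacle but a bookkeeping remark: in the multinomial expansion of $\left(\sum_{i<j} a_{ij} e_i \wedge e_j\right)^k$, any term including a bivector $e_r \wedge e_s$ with $\{r,s\} \not\subseteq I$ contributes a wedge product containing some $e_r$ with $r \notin I$, and hence cannot appear in $e_I$. So the bookkeeping is transparent, and no deeper tool beyond the definition of $\pf$ and the standard multilinear algebra of $\bigwedge^\bullet \C^{2n}$ is needed.
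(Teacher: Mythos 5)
Your proof is correct; the paper does not prove this lemma but simply cites it from Procesi, and your argument — extracting the degree-$2k$ component of $\exp(\omega_A)$, observing that only bivectors $a_{ij}e_i\wedge e_j$ with $i,j\in I$ can contribute to the $e_I$-coefficient, and then invoking the defining identity $\tfrac{1}{k!}\omega_{A_I^I}^{k}=\pf(A_I^I)\,e_I$ for the principal submatrix — is the standard one. The only point worth flagging is notational: the index sets should run over subsets of $[2n]$ (as you correctly assume), not $[n]$ as written in the lemma's statement.
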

	
	Recall from~\cref{sec:og} that the orthogonal Grassmannian $\operatorname{OG}(n,2n) = \operatorname{OG}_+(n,2n) \sqcup \operatorname{OG}_-(n,2n)$ is the union of two components. If $X_+ \in \operatorname{OG}_+(n,2n)$ and $\Delta_{[n+1,2n]}(X_+) \neq 0$, then in the coordinates (\ref{eq:isom}), $X_+$ is the row span of a matrix of the form $\begin{bmatrix}
		M_+& I_{n}
	\end{bmatrix}$, where $M_+$ is a skew symmetric $n \times n$ matrix. Similarly, if $X_- \in \operatorname{OG}_-(n,2n)$ is such that $\Delta_{\{1,n+2,n+3,\dots,2n\}}(X_-) \neq 0$, then $X_-$ is the row span of an $n \times 2n$ matrix with $I_n$ in columns $1,n+2,n+3,\dots,2n$ and a matrix $\tilde M_-$ in columns $2,3,\dots,n+1$ such that the matrix $M_-$ obtained from $\tilde M_-$ by cyclically rotating the columns by one step to the right is skew symmetric. For $J \subseteq [n]$, let $J \Delta \{1\}$ denote the symmetric difference, i.e., 
	\[
	J \Delta \{1\} := \begin{cases}
		{J \setminus \{1\}}&\text{if $1 \in J$};\\
		{ J \cup \{1\}}&\text{if $1 \notin J$}.
	\end{cases}
	\]

\begin{lemma} \label{lemma:exchange}{Let $w:=e_1 - e_1^\vee \in \Pin(Q)$ so that $\rho(w)=R_w \in \operatorname O(Q)$ is given by $e_{1} \longleftrightarrow e_{n+1}$. The following diagram commutes:}
	\[
	\begin{tikzcd} \operatorname{OG}_+(n+1,2n)  \arrow[r,hook,"\operatorname{Ca}_+"]\arrow[d,"R_w"',"\cong"] & \mathbb P(S_+) \arrow[d,"\Gamma_w","\cong"']\\ \operatorname{OG}_-(n+1,2n)\arrow[r,hook,"\operatorname{Ca}_-"] &\mathbb P(S_-)  
	\end{tikzcd}.
	\]
\end{lemma}
\begin{proof}
	{
	Let $X \in \operatorname{OG}_+(n+1,2n)$, so $X = \rho(x) \cdot \Span(e_{n+1},\dots,e_{2n})$ for some $x \in \Spin(Q)$. By $\Spin(Q)$ equivariance of $\operatorname{Ca}_+$, we have 
	\[
\Gamma_w(\operatorname{Ca}_+(X)) = [\Gamma_w \circ \Gamma_x(1)] = [\Gamma_{w \otimes x \otimes w^*} \circ \Gamma_w(1)]=[\Gamma_{w \otimes x \otimes w^*} (e_1)].	
	\]
On the other hand, noting that $w \otimes x \otimes w^* \in \Spin(Q)$ and using $\Spin(Q)$ equivariance of $\operatorname{Ca}_-$, we get
\begin{align*}
\operatorname{Ca}_- ( R_w \cdot X)&=\operatorname{Ca}_-(\rho(w \otimes x)\cdot \Span(e_{n+1},\dots,e_{2n}))\\
&= \operatorname{Ca}_-(\rho(w \otimes x \otimes w^*) \rho(w) \cdot \Span(e_{n+1},\dots,e_{2n}))\\&= 
\operatorname{Ca}_-(\rho(w \otimes x \otimes w^*)\cdot  \Span(e_{1},e_{n+2},\dots,e_{2n}))\\&=[\Gamma_{w \otimes x \otimes w^*} (e_1)]\\&=\Gamma_w(\operatorname{Ca}_+(X)).
\end{align*}
}
\end{proof}

	\begin{proposition} \label{prop:pfaffian}
		Let $(X,s_+,s_-) \in \OGpos$ and let $(X_+,X_-)$ denote the maximal isotropic subspaces in $X$. Let $M_+, M_-$ be as above. Then,
		\[
		\Sigma_J(X,s_+,s_-) = \begin{cases}
			\Sigma_\varnothing(X,s_+,s_-) \pf ((M_{+})^J_J) &\text{if $\#J$ is even};\\
			(-1)^{\frac{\#J-1}{2}}\Sigma_{\{1\}}(X,s_+,s_-) \pf ((M_{-})^{J \Delta \{1\}}_{J \Delta \{1\}}) &\text{if $\#J$ is odd}.
		\end{cases}
		\]
	\end{proposition}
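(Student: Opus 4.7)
The plan is to realize each $s_\pm$ as the image of the Cartan basepoints $1\in S_+$ and $e_1\in S_-$ under an exponentiated Lie algebra element in $\mathfrak{spin}(Q)$, and then identify the coefficients of this exponential with the desired Pfaffians via \cref{lem:exppfaffian}.

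For the even case, I would work in the new coordinates $W\oplus W^\vee$ of~(\ref{eq:isom}), in which $X_+$ is the graph of $M_+$ over the basepoint $W^\vee=\Span(e_1^\vee,\ldots,e_n^\vee)$ whose Cartan image is $1\in S_+$. Using the formula~(\ref{eq:varphi}) for $\varphi_{a\wedge b}$, one directly checks that the element $Y\in\mathfrak{so}(Q)$ defined by $Y(e_i^\vee)=\sum_j(M_+)_{ij}e_j$ and $Y|_W=0$ equals $\varphi_\omega$ for a $2$-tensor $\omega\in\extp^2 W$ proportional (up to sign) to $\sum_{i<j}(M_+)_{ij}\,e_i\wedge e_j$; since $Y^2=0$, $\exp(Y)$ indeed carries $W^\vee$ onto the graph of $M_+$. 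Because $Q$ vanishes on $W$, $\psi(\omega)=\omega$ as a Clifford monomial, and Clifford elements of $W$ act on $S_+=\extp^{\text{even}}W$ by left-wedge multiplication, so $\Spin(Q)$-equivariance of $\operatorname{Ca}_+$ gives
\[
\operatorname{Ca}_+(X_+)=\exp(\psi(\omega))\cdot 1=\exp(\omega)\in\extp^\bullet W.
\]
\cref{lem:exppfaffian} now evaluates the right-hand side to $\sum_{|I|\text{ even}}\pf((A_\omega)^I_I)\,e_I$ for the associated skew matrix $A_\omega=\pm M_+$. Matching with the defining relation $s_+=\sum_I\sigma(I)\Sigma_I\,e_I$ at $I=\varnothing$ pins down the overall scalar as $\Sigma_\varnothing$, and then the identity $\sigma(I)(-1)^{|I|/2}=1$ for every even $|I|$ (since $\sigma(I)=-1$ exactly when $|I|\equiv 2\pmod 4$) cancels the sign discrepancy between $A_\omega$ and $M_+$, giving $\Sigma_J=\Sigma_\varnothing\,\pf((M_+)^J_J)$.

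The odd case proceeds identically at the basepoint $B_-=\Span(e_1,e_{n+2},\ldots,e_{2n})$ of $\operatorname{OG}_-(n,2n)$, whose Cartan image is $e_1\in S_-$. Its complement in $\C^{2n}$ is $\Span(e_1^\vee,e_2,\ldots,e_n)$, and the cyclic rotation in the definition of $M_-$ is precisely the relabeling required so that the $2$-tensor $\omega_-\in\extp^2\Span(e_1^\vee,e_2,\ldots,e_n)$ encoding the graph of $X_-$ over $B_-$ is skew-symmetric and naturally indexed by $[n]$. Under $\psi$, $\omega_-$ becomes a Clifford monomial whose action on $e_1\in S_-$ combines left-wedge multiplication by the $e_i$ with $i>1$ and interior contraction $e_1^\vee\lrcorner$ by the $e_1^\vee$-component. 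Expanding $\exp(\omega_-)\cdot e_1$ and applying \cref{lem:exppfaffian} therefore produces Pfaffians of principal submatrices of $M_-$ paired with wedge monomials $e_J$, where the subset $J\subseteq[n]$ differs from the Pfaffian's index by exactly whether or not $e_1$ has been contracted away. Reparametrizing by odd-cardinality $J\subseteq[n]$, the submatrix indexed by $J\Delta\{1\}$ appears against $e_J$, and collecting the signs from the wedge reorderings together with the $\sigma(I)$ convention yields the stated factor $(-1)^{(|J|-1)/2}$ and normalization $\Sigma_{\{1\}}$.

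The main obstacle is the sign bookkeeping. Four distinct sources of signs must be reconciled: the basis change~(\ref{eq:isom}) inserts factors $(-1)^{i-1}$ in the identification $e_{n+i}\leftrightarrow e_i^\vee$; the identifications $\varphi,\psi$ introduce a sign when converting between $M_\pm$ and the underlying $2$-tensor; the $\sigma(I)$ convention built into the Cartan coordinates flips on residues $\equiv 2\pmod 4$; and in the odd case, each interior contraction with $e_1^\vee$ contributes an additional sign depending on cardinality. Verifying that these four sources collectively produce exactly the clean formula (no extraneous sign) in the even case and the prescribed $(-1)^{(|J|-1)/2}$ in the odd case is the delicate part; once the four contributions are enumerated it reduces to a mechanical check, and the remainder of the argument is a straightforward unwinding of the Clifford formalism of \cref{sec:og}.
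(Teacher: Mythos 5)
Your even-case argument is essentially the paper's: both realize $X_+$ as $\exp$ of a nilpotent element of $\mathfrak{so}(Q)$ applied to the basepoint $W^\vee$, transport it through $\psi\circ\varphi^{-1}$ to a two-form supported on the isotropic subspace $W$ (where $\psi$ is just the identity on $\extp^2 W$), apply \cref{lem:exppfaffian}, and cancel the resulting $(-1)^{\#J/2}$ against $\sigma(J)$. For the odd case, however, you take a genuinely different route. The paper does not redo the exponential computation at the basepoint of $\operatorname{OG}_-(n,2n)$; instead it applies the single reflection $R_w$ with $w=e_1-e_1^\vee$, whose lift to $\Pin(Q)$ acts on $S$ by the explicit rule $w\cdot e_J=-e_{J\Delta\{1\}}$ if $1\in J$ and $+e_{J\Delta\{1\}}$ otherwise, and which simultaneously swaps $\operatorname{OG}_+\leftrightarrow\operatorname{OG}_-$ and $M_+\leftrightarrow M_-$. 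This transports the already-proved even-case identity to the odd case in one line of sign matching. Your direct approach -- exponentiating a two-form on the isotropic subspace $\Span(e_1^\vee,e_2,\dots,e_n)$ and letting it act on $e_1$ by a mixture of wedge products and contractions -- is viable (that subspace is indeed isotropic, so $\psi$ again introduces no $Q$-correction and the Clifford exponential is still a Pfaffian sum), but it forces you to track an extra layer of signs coming from reordering monomials past $e_1$ and from the contraction $e_1^\vee\lrcorner$, which is precisely the bookkeeping the reflection trick avoids.

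The one substantive reservation: you explicitly defer the final sign verification in the odd case to ``a mechanical check.'' For this proposition the signs \emph{are} the content -- that Cartan coordinates are Pfaffians up to sign follows formally from \cref{lem:exppfaffian} -- so as written the proof of the factor $(-1)^{(\#J-1)/2}$ and of the normalization by $\Sigma_{\{1\}}$ is not actually supplied. I would either carry out that computation in full or switch to the reflection argument, where the verification reduces to the two displayed cases $1\in J$ and $1\notin J$ as in the paper.
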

	\begin{proof}
		We use $\Spin(Q)$ equivariance of the Cartan map and the following commutative diagram of exponential maps:
		\[
		\begin{tikzcd} \mathfrak{so}(Q)  \arrow[r,"\psi \circ \varphi^{-1}","\sim"']\arrow[d,"\exp"'] & \mathfrak{spin}(Q) \arrow[d,"\exp"]\\ \operatorname{SO}(Q) &\Spin(Q)  \arrow[l,"\rho"']
		\end{tikzcd},
		\]
		where the $\exp$ on the left is the matrix exponential map and on the right is (\ref{exp:cliff}).

		Consider the element ${m_+:=\begin{bmatrix}
				0 & -M_+\\
				0 & 0
			\end{bmatrix}
		} \in \mathfrak{so}(Q)$. Exponentiating $m_+$, we get ${\begin{bmatrix}
				I_n & -M_+ \\
				0 & I_n
			\end{bmatrix}
		} \in \SO(Q)$, so that we have $\begin{bmatrix}
			0&I_n
		\end{bmatrix}(\exp{(m_+)})^T=\begin{bmatrix}
			M_+ &I_n
		\end{bmatrix}$. On the other hand, under the isomorphism $\psi \circ \varphi^{-1}:\mathfrak{so}(Q) \xrightarrow{\sim} \mathfrak{spin}(Q)$, $m_+$ goes to $-\sum_{1\leq i<j \leq n} (M_+)_{ij} e_i \otimes e_j$. Exponentiating, and using Lemma \ref{lem:exppfaffian} along with $\Spin(Q)$ equivariance of the Cartan map, we get
		\begin{align} \label{eq:pfeven}
			[s_+] =[\exp(m_+) \cdot 1] &=\left[ \sum_{J~\text{even}} \pf ((-M^{+})^J_J) e_J \right].
		\end{align}
		Since $\Sigma_J(X,s_+,s_-)$ is the coefficient of $\sigma(J)e_J$ in $s_+$, we get 
		\begin{align*}
			\frac{	\Sigma_J(X,s_+,s_-)}{\Sigma_\varnothing(X,s_+,s_-)}=\sigma(J)  \pf ((-M^{+})^J_J)= \pf ((M^{+})^J_J),
		\end{align*}
		where we used $\pf ((-M^{+})^J_J)=(-1)^{\frac{\#J}{2}}\pf ((M^{+})^J_J)$ and $(-1)^{\frac{\#J}{2}} = \sigma(J)$.
		
	{	
		Let $w:=e_1 - e_1^\vee \in \Pin(Q)$ as in Lemma~\ref{lemma:exchange}. Since $\rho(w)=R_w \in \operatorname O(Q)$ is given by $e_{1} \longleftrightarrow e_{n+1}$, it sends $X_-$ to $\text{row span}\begin{bmatrix}
			M_- &I_n
		\end{bmatrix} \in \operatorname{OG}_+(n+1,2n)$. Moreover, $\Gamma_w:S_+ \ra S_-$ is given by
		\[
		\Gamma_w(e_J) = \begin{cases}
			-e_{J \Delta \{1\}}&\text{if $1 \in J$};\\
			e_{ J \Delta \{1\}}&\text{if $1 \notin J$}.
		\end{cases}
		\]
	Therefore, by Lemma~\ref{lemma:exchange}, we get	}
		\[
		\left[-	\sum_{J~\text{odd}\mid 1 \in J} \Sigma_J(X,s_+,s_-) e_{J \Delta \{1\}} + \sum_{J~\text{odd}\mid 1 \notin J} \Sigma_J(X,s_+,s_-) e_{J \Delta \{1\}}\right]=
		\left[\sum_{J~\text{odd}} \pf ((-M_{-})^{J \Delta \{1\}}_{J \Delta \{1\}}) e_{J \Delta \{1\}}\right].	
		\]
		Now, we have to check two cases. If $1 \in J$, then 
		\[
		\frac{\Sigma_J(X,s_+,s_-)}{\Sigma_{\{1\}}(X,s_+,s_-)}= (-1)^{\frac{\#J-1}{2}}\pf ((M_{-})^{J \Delta \{1\}}_{J \Delta \{1\}}),
		\]
		and if $1 \notin J$, then 		
		\[
		\frac{\Sigma_J(X,s_+,s_-)}{\Sigma_{\{1\}}(X,s_+,s_-)}=- (-1)^{\frac{\#J+1}{2}}\pf ((M_{-})^{J \Delta \{1\}}_{J \Delta \{1\}})=(-1)^{\frac{\#J-1}{2}}\pf ((M_{-})^{J \Delta \{1\}}_{J \Delta \{1\}}).
		\]
	\end{proof}

	\begin{example}
		Recall~\cref{example:matrix}.  After making the change of basis (\ref{eq:isom}), the two maximal isotropic subspaces $X_+$ and $X_-$ are the row spans of 
		\[
		\begin{bmatrix}0&\frac{\Sigma_{12}}{\Sigma_\varnothing }&\frac{\Sigma_{13}}{\Sigma_\varnothing }&1&0&0\\
			-\frac{\Sigma_{12}}{\Sigma_\varnothing }&0&\frac{\Sigma_{23}}{\Sigma_\varnothing }&0&1&0\\
			-\frac{\Sigma_{13}}{\Sigma_\varnothing }&-\frac{\Sigma_{23}}{\Sigma_\varnothing }&0&0&0&1
		\end{bmatrix}~\text{and}~\begin{bmatrix}
			1& \frac{\Sigma_2}{\Sigma_1} & \frac{\Sigma_3}{\Sigma_1}&0&0&0\\
			0&0&-\frac{\Sigma_{123}}{\Sigma_1} &  -\frac{\Sigma_{2}}{{\Sigma_1}}&1&0\\
			0&\frac{\Sigma_{123}}{\Sigma_1} &0& -\frac{\Sigma_{3}}{\Sigma_1}&0&1
		\end{bmatrix}~\text{respectively}.
		\]
		Therefore,
		\[
		M_+=	\begin{bmatrix}0&\frac{\Sigma_{12}}{\Sigma_\varnothing }&\frac{\Sigma_{13}}{\Sigma_\varnothing }\\
			-\frac{\Sigma_{12}}{\Sigma_\varnothing }&0&\frac{\Sigma_{23}}{\Sigma_\varnothing }\\
			-\frac{\Sigma_{13}}{\Sigma_\varnothing }&-\frac{\Sigma_{23}}{\Sigma_\varnothing }&0
		\end{bmatrix}~\text{and}~M_-=\begin{bmatrix}
			0& \frac{\Sigma_2}{\Sigma_1} & \frac{\Sigma_3}{\Sigma_1}\\
			-\frac{\Sigma_{2}}{{\Sigma_1}}&0&-\frac{\Sigma_{123}}{\Sigma_1}\\
			-\frac{\Sigma_{3}}{\Sigma_1}&\frac{\Sigma_{123}}{\Sigma_1} &0& 
		\end{bmatrix},
		\]
		using which we verify~\cref{prop:pfaffian}. For example,
		\begin{align*}
			\Sigma_\varnothing \pf ((M_{+})^{12}_{12})&=\Sigma_\varnothing \pf \begin{bmatrix}
				0 & \frac{\Sigma_{12}}{\Sigma_\varnothing}\\
				-\frac{\Sigma_{12}}{\Sigma_\varnothing}&0
			\end{bmatrix}=\Sigma_{12}~\text{when $J=\{1,2\}$},\\
			(-1)^{\frac{1-1}{2}}	\Sigma_1 \pf ((M_{-})^{12}_{12})&=\Sigma_1 \pf \begin{bmatrix}
				0 & \frac{\Sigma_{2}}{\Sigma_1}\\
				-\frac{\Sigma_{2}}{\Sigma_1}&0
			\end{bmatrix}=\Sigma_{2}~\text{when $J=\{2\}$, and}\\
			(-1)^{\frac{2-1}{2}}		 \Sigma_1 \pf ((M_{-})^{23}_{23})&=-\Sigma_1 \pf \begin{bmatrix}
				0 & \frac{-\Sigma_{123}}{\Sigma_1}\\
				\frac{\Sigma_{123}}{\Sigma_1}&0
			\end{bmatrix}=\Sigma_{123}~\text{when $J=\{1,2,3\}$}.
		\end{align*}
		
	\end{example}

	\subsection{\texorpdfstring{$B$}{B} variables} \label{sec:bvar}
	
	Consider the space $\mathcal B_G:=\Rpos^{V(G) \sqcup F(G)}$ of functions $B : V(G) \sqcup F(G) \ra \Rpos$. 
	We call a pair $(G,B)$ a \textit{$B$-network}. Since there is a bijection $V(G) \sqcup F(G) \xrightarrow[]{\sim} W(G_+)$, we will sometimes write $B_{\wv_u}$ instead of $B_u$ for $u \in V(G) \sqcup F(G)$.

	A Y-$\Delta$ move $G \rightsquigarrow G'$ induces a homeomorphism $\mathcal B_{G} \xrightarrow[]{\sim} \mathcal B_{G'}$ given by the \textit{cube recurrence}
	\[
	B_{f_0'} := \frac{B_{v_1}B_{f_1}+B_{v_2}B_{f_2}+B_{v_3}B_{f_3}}{B_{v_0}}
	\]
	and $B_{v'} := B_v$ for all other $v \in V(G')$ and $B_{f'}:=B_f$ for all other $f \in F(G')$, where vertices and faces are labeled as in Figure~\ref{fig:ydmove}. Define $\mathcal B_n := \bigsqcup_{\tau_G=\tau_n} \mathcal B_{G}\Big/\text{moves}$.
	
	\begin{definition} \label{def:atob}
	Each face $g$ of $G_+$ is incident to two white vertices $\wv_v$ and $\wv_f$, where $v \in V(G)$ and $f \in F(G)$. Define the inclusion $i_G^+:\mathcal B_{G} \hookrightarrow \mathcal A_{G_+}$ by $A_{g}:=B_{v} B_{f}$. 
	\end{definition}
	\begin{proposition}[Goncharov and Kenyon, {\cite[Lemma 5.11]{GK}}]
		If $G$ and $G'$ are related by a Y-$\Delta$ 
		move, then there is a sequence of moves relating $G_+$ and $G_+'$ such that the following diagram commutes.
		\[
		\begin{tikzcd} \mathcal B_{G} \arrow[r,hook,"i_G^+"]\arrow[d,"\text{Y-$\Delta$ move}"',"\sim"] & \mathcal A_{G_+}\arrow[d,"\text{moves}","\sim"']\\ \mathcal B_{G'} \arrow[r,hook,"i_{G'}^+"] &\mathcal A_{G_+'}
		\end{tikzcd}
		\]
		Therefore, the inclusions $i_G^+$ glue to an inclusion $i^+:\mathcal B_n \hookrightarrow \mathcal A_{\pi_{n+1,2n}}$.
	\end{proposition}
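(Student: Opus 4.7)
My plan is to deduce the $\mathcal{A}$-side statement from the already established $\mathcal{X}$-side statement by pinning down the remaining scalar ambiguity. Take the sequence of bipartite moves $G_+ \rightsquigarrow G_+'$ guaranteed by the preceding proposition (Goncharov--Kenyon, Lemma 5.11), and let $\sigma: \mathcal{A}_{G_+} \xrightarrow{\sim} \mathcal{A}_{G_+'}$ denote the homeomorphism obtained by composing the cluster $\mathcal{A}$-mutation formulas at each spider move in this sequence. Let $\mu_{Y\text{-}\Delta}: \mathcal{B}_G \xrightarrow{\sim} \mathcal{B}_{G'}$ denote the cube recurrence. We must show that $\sigma \circ i_G^+ = i_{G'}^+ \circ \mu_{Y\text{-}\Delta}$ as maps $\mathcal{B}_G \to \mathcal{A}_{G_+'}$.

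The key compatibility is $p_{G_+} \circ i_G^+ = j_G^+ \circ q_G$ (and likewise for $G'$). To see this, take an edge $\bv_e \wv_v$ of $G_+$ with $v$ an endpoint of $e \in E(G)$; the two faces of $G_+$ incident to this edge correspond to the pairs $(v,f_1)$ and $(v,f_2)$ where $f_1,f_2$ are the two faces of $G$ adjacent to $e$, so $p_{G_+}(i_G^+(B))$ assigns it the weight $\frac{1}{B_v^2 B_{f_1} B_{f_2}}$. Likewise each edge $\bv_e \wv_f$ receives weight $\frac{1}{B_{v_1} B_{v_2} B_f^2}$. Applying the gauge transformation $g(\wv_v) = B_v^2$, $g(\wv_f) = B_f^2$, $g(\bv_e) = (B_{v_1} B_{v_2})^{-1}$ converts this to $\wt_+(\bv_e \wv_v) = \frac{B_{v_1} B_{v_2}}{B_{f_1} B_{f_2}} = c(e) = q_G(B)(e)$ and $\wt_+(\bv_e \wv_f) = 1$, which is exactly $j_G^+(q_G(B))$.

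Since $p_\Gamma$ commutes with moves on both sides and the $\mathcal{X}$-diagram commutes by the preceding proposition, a diagram chase gives $p_{G_+'} \circ \sigma \circ i_G^+ = p_{G_+'} \circ i_{G'}^+ \circ \mu_{Y\text{-}\Delta}$. Since $p_{G_+'}: \mathcal{A}_{G_+'}/\Rpos \xrightarrow{\sim} \mathcal{X}_{G_+'}$ is a homeomorphism (Theorem~\ref{thm:MSmain} applied to $G_+'$), the two compositions in $\mathcal{A}_{G_+'}$ agree up to the global $\Rpos$ action scaling all $A$-variables by a common scalar.

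To pin down this scalar to $1$, pick any boundary face $f_j^-$ of $G_+'$. Since boundary faces are frozen under cluster mutations, $\sigma$ preserves $A_{f_j^-}$, so $\sigma(i_G^+(B))_{f_j^-} = B_v B_f$ for the vertex--face pair $(v,f)$ of $G$ corresponding to $f_j^-$. On the other side, the Y-$\Delta$ move is local, so $(v,f)$ lies outside the Y-$\Delta$ region (boundary data is untouched) and $\mu_{Y\text{-}\Delta}$ preserves both $B_v$ and $B_f$; hence $i_{G'}^+(\mu_{Y\text{-}\Delta}(B))_{f_j^-} = B_v B_f$ as well. The two sides agree at $f_j^-$, which kills the scalar. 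The main obstacle here is verifying the gauge computation in the second paragraph, but it is a direct check; the rest of the argument is a formal diagram chase using the already-proven $\mathcal{X}$-side compatibility and the locality of Y-$\Delta$.
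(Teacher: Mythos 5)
The paper does not prove this proposition itself (it is cited from Goncharov--Kenyon), so I can only assess your argument on its own terms. There is a genuine gap at the step you call the ``key compatibility'': the identity $p_{G_+}\circ i_G^+=j_G^+\circc q_G$ is false. Your verification only treats the first case in the definition of $p_\Gamma$; for an edge of $G_+$ incident to a boundary white vertex $d_i$, the weight is $\frac{A_{f_i^-}}{A_fA_g}$, not $\frac{1}{A_fA_g}$, and moreover your gauge transformation sets $g(\wv_v)=B_v^2$ at \emph{all} white vertices, whereas gauge functions are required to equal $1$ on the boundary. The resulting discrepancy is not a gauge: one computes that $p_{G_+}\circ i_G^+$ and $j_G^+\circ q_G$ differ by the $\Rpos^{2n}$-action rescaling the edges at each boundary vertex $d_j$ by $t_j=B_{d_{j-1}}/B_{d_j}$ (already for $n=2$, with $G$ a single edge, the two classes in $\mathcal X_{G_+}$ have different ratios of edge weights around the central black vertex, so they are genuinely distinct). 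This is precisely why the paper must define $\vec\tau_{\rm elec}$ as $t\cdot\vec\tau$ rather than $\vec\tau$; see the boundary cases 1(b) and 2(b) in the proof of \cref{thm:elecrt}. As written, your diagram chase and the subsequent ``pin down the scalar at a boundary face'' step both rest on this false identity, and indeed the two sides already disagree at boundary faces by more than a global scalar before any correction.

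The overall strategy is salvageable: the correct statement is $p_{G_+}\circ i_G^+ = t(B)\cdot\bigl(j_G^+\circ q_G\bigr)$ with $t(B)_j=B_{d_{j-1}}/B_{d_j}$, the cube recurrence is local and hence preserves all boundary values $B_{d_j}$ (so $t(B)=t(\mu_{Y\text{-}\Delta}(B))$), and the move isomorphisms of $\mathcal X$ commute with the $\Rpos^{2n}$-action (as in \cref{lem:tequiv}); with these three facts your chase goes through and the boundary-face normalization then does kill the residual $\Rpos$ scalar. Alternatively, and more in the spirit of the cited source, one can avoid $p_\Gamma$ entirely and check directly that the specific sequence of contraction/uncontraction and spider moves realizing a Y-$\Delta$ move turns the cluster mutation $A_{f_0'}=\frac{A_{f_1}A_{f_3}+A_{f_2}A_{f_4}}{A_{f_0}}$, under the substitution $A_g=B_vB_f$, into the cube recurrence. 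Either way, the boundary behavior of $p_\Gamma$ must be confronted; your proof as it stands does not.
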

	\begin{definition}	
		Given $G$ a reduced graph with $\tau_G=\tau_n$, we assign to each vertex and face of $G$ a subset of $[n]$ as follows. For $j \in [n]$, let $\beta_j$ denote {the strand in $G$ between $t_{j}$ and $t_{n+j}$}. {The faces of the medial graph $G^\times$ are in bijection with $V(G) \sqcup F(G)$. We say that $u \in V(G) \sqcup F(G)$ is to the left of $\beta_j$ if the corresponding face of $G^\times$ is to the left of $\beta_j$ when $\beta_j$ is oriented from $t_{n+j}$ to  $t_j$.} For $u \in V(G) \sqcup F(G)$, define
		\[
		J(u):=\{j \in [n] \mid u~\text{is to the left of $\beta_j$}\}.
		\]
	\end{definition}

		\begin{figure}
	\begin{tabular}{ccc}
		\includegraphics[width=0.2\textwidth]{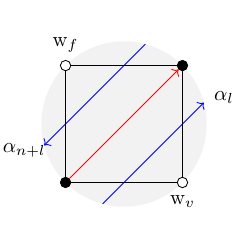} 
	\end{tabular}
	\caption{\label{fig:localfaceg} The strands of $G$ and $G_+$ near a face $g$. The red strand is $\beta_l$ oriented from $t_{n+l}$ to $t_l$.}
\end{figure}

	\begin{lemma}\label{lem:jvjf}
		Let $g$ be a face of $G_+$ incident to white vertices $\wv_v, \wv_f$ where $v \in V(G)$ and $f \in F(G)$. If $I:=S(g)$, then $\{J,[n]\setminus J^\vee\} = \{J(v),J(f)\}$, where $J$ and $J^\vee$ are as in (\ref{eq:jjdual}).
	\end{lemma}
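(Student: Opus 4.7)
The approach hinges on a ``doubling'' correspondence: each medial strand $\beta_i$ of $G$ (directed from $t_{n+i}$ to $t_i$) should give rise to two parallel strands $\alpha_i^L$ and $\alpha_i^R$ in $G_+$, running along its left and right sides respectively. The left parallel $\alpha_i^L$ is defined by passing, at each medial edge traversed by $\beta_i$ (which lies in the interior of some face of $G_+$), through the white vertex $\wv_u$ with $u$ the vertex or face of $G$ on the \emph{left} of $\beta_i$; analogously for $\alpha_i^R$ on the right. I would verify that these are honest strands of $G_+$ by a local check at each $\bv_e$ (max-right turn) and each $\wv_v, \wv_f$ (max-left turn). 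Using the convention that the boundary vertex $t_j$ of $G^\times$ lies between $d_{j-1}$ and $d_j$ of $G_+$, together with the strand permutation $\pi_{n+1,2n}(i) = i+n+1 \pmod{2n}$, one pins down the endpoints: $\alpha_i^L$ runs from $d_{n+i-1}$ to $d_i$ with the same direction as $\beta_i$, while $\alpha_i^R$ runs from $d_{i-1}$ to $d_{n+i}$ with the opposite direction. In particular, the strand of $G_+$ ending at $d_i$ is $\alpha_i^L$ for $i \in [n]$, and the strand ending at $d_{n+i}$ is $\alpha_i^R$.

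Given this correspondence, the core of the proof is a case analysis for each $i \in [n]$. As discussed preceding the lemma, the interior of $g$ contains the unique medial edge $v_{e_1} v_{e_2}$ of $G^\times$, which is traversed by a unique medial strand $\beta_{i_0}$. For $i \neq i_0$, $\beta_i$ does not cross this medial edge, so $v$ and $f$ lie on the same side of $\beta_i$; for $i = i_0$, $\beta_{i_0}$ separates $v$ from $f$. Three cases emerge: (i) $v, f$ both on the left of $\beta_i$: then $g$ lies on the left of $\alpha_i^L$ (which is an infinitesimal left-shift of $\beta_i$) and on the right of $\alpha_i^R$ (oppositely directed), giving $i \in J$ and $i \in [n] \setminus J^\vee$; (ii) $v, f$ both on the right: the mirror situation, giving $i \notin J$ and $i \notin [n] \setminus J^\vee$; (iii) $i = i_0$: both parallels traverse $g$, each using two edges of $\partial g$, and a direct local inspection in the 4-cycle $\wv_v \bv_{e_1} \wv_f \bv_{e_2}$ using the zig-zag turning rules shows $g$ lies on the right of each, so $i_0 \notin J$ and $i_0 \in [n] \setminus J^\vee$.

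Combining the cases, for each $i \in [n]$ the memberships align: in (i) and (ii), all four of $J, [n] \setminus J^\vee, J(v), J(f)$ agree on whether $i$ is a member; in (iii), $J$ and one of $J(v), J(f)$ both miss $i_0$, while $[n] \setminus J^\vee$ and the other both contain $i_0$. This yields $\{J, [n] \setminus J^\vee\} = \{J(v), J(f)\}$ as unordered pairs of subsets of $[n]$, with the matching determined by which of $v, f$ lies on the left of $\beta_{i_0}$. The main obstacle is the first step: establishing the strand correspondence and identifying endpoints. Verifying the zig-zag rules at each vertex of $G_+$ requires case-by-case local analysis based on the cyclic arrangement of edges in the planar embedding of $G$, and the endpoint identification depends on carefully locating $d_j$ relative to $t_j$ on the boundary of the disk.
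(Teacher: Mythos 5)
Your overall strategy is the same as the paper's: identify each strand of $G_+$ as a left or right parallel of a medial strand $\beta_i$ of $G$ (the strand ending at $d_i$ parallel to $\beta_i$, the one ending at $d_{n+i}$ antiparallel), and then compare side‑memberships index by index, the only interesting index being the $i_0$ whose medial strand contains the edge of $G^\times$ corresponding to $g$. The paper's proof is exactly this, done by fixing $v$ on the left and $f$ on the right of $\beta_{i_0}$ and writing out $S(g)$ explicitly.

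However, your case (iii) is incorrect: $g$ lies on the \emph{left} of both strands of $G_+$ ending at $d_{i_0}$ and $d_{n+i_0}$, not on the right, so $i_0\in J\cap J^\vee$ rather than $i_0\notin J\cup J^\vee$. You can see this without any local inspection at the quadrilateral: $S(g)$ is an $(n+1)$-element subset of $[2n]$ (since $k=n+1$ for $G_+$), so $\#J+\#J^\vee=n+1$; your cases (i) and (ii) correctly show that each $i\neq i_0$ lies in exactly one of $J$ and $J^\vee$, so $J\cap J^\vee\subseteq\{i_0\}$ and $\#(J\cup J^\vee)\leq n$, and the count then forces $J\cap J^\vee=\{i_0\}$. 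Your version instead yields $\#S(g)=n-1$, a contradiction. The error happens to be invisible in the final unordered-pair statement, because flipping the membership of $i_0$ in both $J$ and $J^\vee$ simultaneously merely swaps which of $J$ and $[n]\setminus J^\vee$ is matched with $J(v)$ and which with $J(f)$. But the correct refined statement (with $v$ on the left of $\beta_{i_0}$ one has $J=J(v)$ and $[n]\setminus J^\vee=J(f)$, and in particular $\#(J\cap J^\vee)=1$) is precisely the hypothesis needed to apply \cref{prop:atob} in the proposition that follows the lemma, so the local determination in case (iii) should be corrected rather than left as is.
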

	\begin{proof}
		{
		Let $\alpha_i$ denote the strand in $G_+$ ending at $d_i^+$ as in Section~\ref{sec:gtogplus} so that $\alpha_i$ and $\alpha_{n+i}$ are the two strands in $G_+$ that correspond to the strand $\beta_{i}$ in $G$. Note that the faces of $G_+$ are in bijection with edges of $G^\times$. Let $l \in [n]$ be such that $\beta_l$ is the strand containing the edge of $G^\times$ corresponding to $g$. Without loss of generality, assume that $v$ is to the left and $f$ to the right of $\beta_l$. Then, clearly we have $J(v) = J(f) \cup \{l\}$. Note that $l, n+l \in S(g)$ since $g$ is to the left of both $\alpha_l$ and $\alpha_{n+l}$ (Figure~\ref{fig:localfaceg}). For $i \neq l$, there is exactly one strand in the pair $\{\alpha_i,\alpha_{n+i}\}$ that $g$ is to the left of: if $i \in J(f)$, then this is $\alpha_i$ and if $i \notin J(v)$, then this is $\alpha_{n+i}$. Any $\beta_i, i\neq l$ is of one of these two types (i.e., $[n] \setminus J(v) \sqcup J(f) = [n] \setminus \{l\}$), so we get 
		\[S(g)=\{l,n+l\} \cup \{i  \mid i \in J(f)\} \cup \{n+i \mid i \notin J(v)\}.\]
		Therefore, $J = S(g) \cap [n] = J(f) \cup \{l\}=J(v)$ and $J^\vee = ([n]\setminus J(v))  \cup\{l\} = [n] \setminus J(f)$.}
	\end{proof}

	\begin{figure}
		\begin{tabular}{cc}
			\includegraphics[width=0.3\textwidth]{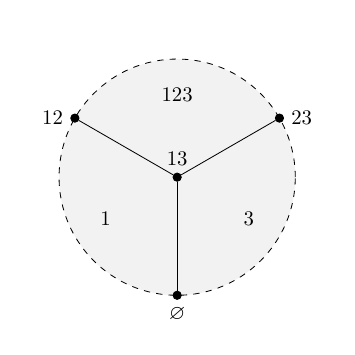} 
			&\includegraphics[width=0.3\textwidth]{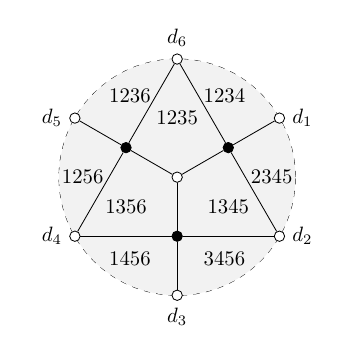}\\
			(a)  &(b) 
		\end{tabular}
		\caption{\label{fig:network3labels} (a) Labeling the vertices and faces of the electrical network from Figure~\ref{fig:facenetwork3}(a) and (b) the face labels of $G_+$.}
	\end{figure}
	\begin{example}
		Figures~\ref{fig:network3labels}(a) and (b) show the labels $J$ and $S$ for $G$ and $G_+$ from Figure~\ref{fig:facenetwork3}.
	\end{example}
	
	Define the map
	\[
	\Psi_G: \OGpos \ra \mathcal B_G
	\]
	sending $(X,s_+,s_-)$ to $(\Sigma_{J(u)}(X,s_+,s_-))_{u \in V(G) \sqcup F(G)}$. 
	
	\begin{theorem}[Henriques and Speyer, \cite{HenriquesSpeyer}] 
		For every reduced $G$ with $\tau_G = \tau_n$, $\Psi_G:\OGpos \xrightarrow{\sim} \mathcal B_{G}$ is a homeomorphism. 
	\end{theorem}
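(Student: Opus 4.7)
The plan is a two-part argument. First, I would verify that $\Psi$ is equivariant with respect to Y-$\Delta$ moves on $G$ and the cube-recurrence moves on $\mathcal B_G$. Second, I would establish the theorem for a single base case $G_0$. Because reduced graphs with medial pairing $\tau_n$ form a single Y-$\Delta$ equivalence class, these two parts suffice, and continuity in both directions is automatic from the explicit rational formulas, so the real content is bijectivity.

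For the Y-$\Delta$ compatibility, the combinatorial core is to analyze the three medial strands $\beta_j, \beta_k, \beta_l$ passing through the Y-$\Delta$ site and show that the eight labels $J(v_0), J(v_1), J(v_2), J(v_3), J(f_0), J(f_1), J(f_2), J(f_3)$ (together with their primed counterparts on $G'$) together fill out a Boolean $3$-cube $\{I \cup S : S \subseteq \{j,k,l\}\}$ for some $I \subseteq [n] \setminus \{j,k,l\}$, with the central vertex and central face swapping roles under the move and the peripheral data matching the indexing of (\ref{cuberecsign}). Once this is in place, (\ref{cuberecsign}) specialized at this cube reads
\[
\Sigma_I \Sigma_{I \cup \{j,k,l\}} = \Sigma_{I \cup \{j\}} \Sigma_{I \cup \{k,l\}} + \Sigma_{I \cup \{k\}} \Sigma_{I \cup \{j,l\}} + \Sigma_{I \cup \{l\}} \Sigma_{I \cup \{j,k\}},
\]
which after pulling back through $\Psi$ becomes the cube recurrence $B_{v_0} B_{f_0'} = B_{v_1} B_{f_1} + B_{v_2} B_{f_2} + B_{v_3} B_{f_3}$, modulo a check that the $\sigma(\cdot)$ signs in the definition of the $\Sigma$'s pair up consistently on the Boolean cube (parities of $|J|$ modulo $4$ at antipodes differ by an even amount, so all four products acquire the same overall sign).

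For the base case, I would choose a convenient $G_0$ whose labels $\{J(u) : u \in V(G_0) \sqcup F(G_0)\}$ form a cube-recurrence initial condition in the sense of Carroll--Speyer. Given $(B_u) \in \mathcal B_{G_0}$, iterating the cube recurrence extends the data uniquely and positively to all Cartan coordinates $(\Sigma_J)_{J \subseteq [n]}$ satisfying (\ref{cuberecsign}); \cref{prop:pfaffian} then packages them into skew matrices $M_\pm$ whose principal Pfaffians are all positive, yielding $X_\pm \in \operatorname{OG}_\pm(n,2n)$ and hence a point $(X, s_+, s_-) \in \OGpos$ with $\Psi_{G_0}(X, s_+, s_-) = (B_u)$. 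Injectivity follows because the full collection of Cartan coordinates determines $(X, s_+, s_-)$ inside $S_+ \times S_-$, via the embedding described in \cref{sec:og}. The main obstacle I anticipate is the sign/label bookkeeping in the Y-$\Delta$ compatibility step: precisely identifying how the three strands at the move site cut out the Boolean cube, and verifying that the $\sigma(\cdot)$ signs in (\ref{cuberecsign}) cooperate with the positive signs in the cube recurrence. Given \cref{thm:HSeqns} and \cref{prop:pfaffian}, the base case is then essentially a matter of assembling pieces already proved in the excerpt.
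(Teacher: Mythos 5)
This theorem is not proved in the paper at all: it is imported as a black box from Henriques and Speyer, and the only thing the paper itself verifies in its vicinity is the compatibility of $\Psi_G$ with Y-$\Delta$ moves (the commuting triangle just below the statement, deduced from \cref{thm:HSeqns} and the computation of the eight labels around a move site). Your part (1) is therefore exactly the argument the paper already gives for the gluing step, and your overall architecture --- move-invariance plus a base case, using the fact that all reduced graphs with medial pairing $\tau_n$ form one Y-$\Delta$ class --- is a faithful reconstruction of how Henriques and Speyer actually organize their proof. One small simplification: equation (\ref{cuberecsign}) is already stated in the $\Sigma$ variables with all plus signs, so the $\sigma(\cdot)$ sign bookkeeping you worry about is prepackaged in \cref{thm:HSeqns}; under the label identification the pullback of (\ref{cuberecsign}) through $\Psi$ \emph{is} the cube recurrence verbatim, with nothing left to check.

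The genuine gap is in your base case, where the sentence ``iterating the cube recurrence extends the data uniquely and positively to all Cartan coordinates $(\Sigma_J)_{J\subseteq[n]}$ satisfying (\ref{cuberecsign})'' asserts essentially the entire content of the theorem. Three separate things are hidden there. First, \emph{reachability and well-definedness}: you must show that the $\binom{n+1}{2}+1$ labels $\{J(u)\}$ of your chosen $G_0$ form a valid initial condition from which every $J\subseteq[n]$ can be reached by recurrence steps whose ``divisor'' entry is already known and nonzero, and that the value obtained is independent of the propagation order. Second, \emph{consistency}: even granting well-definedness, the propagated family must satisfy \emph{all} instances of (\ref{cuberecsign}), not merely the ones used during propagation; \cref{thm:HSeqns} only identifies $\OGdec$ with the common zero locus of the full system, so without consistency you cannot conclude the extended tuple lies on $\OGdec$. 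Third, \emph{positivity}: you need a subtraction-free (grove/Laurent-positivity) formula for the propagated $\Sigma_J$ in terms of the initial $B_u$. Each of these is a substantive theorem of Carroll--Speyer and Henriques--Speyer rather than an assembly of pieces already in the excerpt, so as written your base case is circular. If you intend this as a citation-level outline that defers those three points to \cite{HenriquesSpeyer}, it is accurate; as a self-contained proof it is incomplete precisely there.
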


		\begin{figure}[h]
		\begin{tabular}{ccc}
			\includegraphics[width=0.6\textwidth]{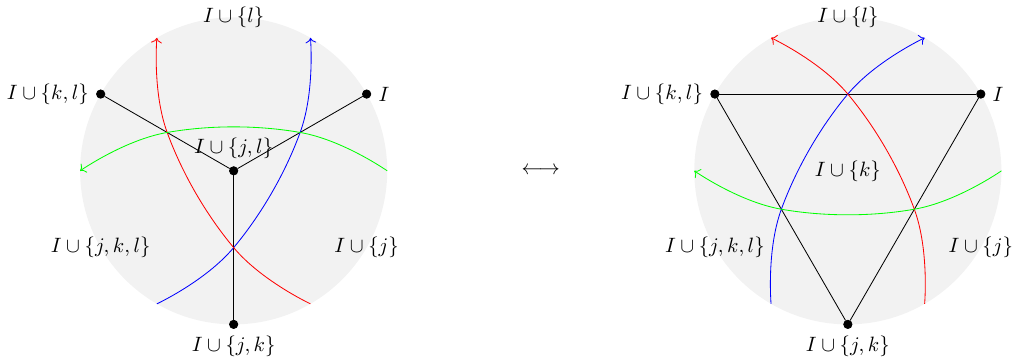} 
		\end{tabular}
		\caption{\label{fig:ydmovezz} {Labeling of the vertices and faces in a Y-$\Delta$ move. The green, red and blue strands are $\beta_j$, $\beta_k$ and $\beta_l$ respectively. The strand $\beta_i$ for $i \in \{j,k,l\}$ is oriented from $t_{n+i}$ to $t_i$. It is a consequence of $G$ being reduced that locally, the orientations of the strands must be as shown here up to a cylic rotation. Indeed, since each pair of strands crosses once inside the small disk $R$ shown in the figure where the Y-$\Delta$ move occurs, they cannot cross again in $\mathbb D \setminus R$. Therefore, the cyclic order of their endpoints around the boundary of $R$ is the same as the cyclic order of their endpoints around the boundary of $\mathbb D$, so we have three consecutive in-endpoints followed by three consecutive out-endpoints. }}
	\end{figure}
	
	Suppose $G$ and $G'$ are related by a Y-$\Delta$ move with vertices and faces labeled as in Figure~\ref{fig:ydmove}. Then, up to cyclic rotation of the tuple $(v_1,f_1,v_2,f_2,v_3,f_3)$, we have
	\begin{align*}
		J(v_0)&=I \cup\{j,l\},& J(v_1)&=I,&J(v_2)&=I \cup\{j,k\},&J(v_3)&=I \cup\{k,l\},\\
		J(f_0)&=I\cup\{k\},& J(f_1)&=I \cup\{j,k,l\},&J(f_2)&=I \cup\{l\},&J(f_3)&=I \cup\{j\}.
	\end{align*}
	for some $I \subset [n]$ and $ j<k<l$ (see Figure~\ref{fig:ydmovezz}). By~\cref{thm:HSeqns}, the following diagram commutes
	\[
	\begin{tikzcd}
		&\mathcal B_{G}\arrow[dd,"\text{Y-$\Delta$ move}","\sim"'] \arrow[dr]&\\ \OGpos\arrow[ur,"\Psi_{G}","\sim"']\arrow[dr,"\Psi_{G'}"',"\sim"]&&\mathcal B_n\\ &\mathcal B_{G'} \arrow[ur]&
	\end{tikzcd},
	\]
	so we obtain a well-defined homeomorphism $\Psi:\OGpos \xrightarrow[]{\sim} \mathcal B_n$.

	\begin{proposition}\label{prop:ogg}
		The following diagram commutes.
		\begin{equation} \label{ogg}
			\begin{tikzcd} 
				\mathcal B_{G} \arrow[r,"i_{G}^+"] &\mathcal A_{G_+}\\
				\OGpos \arrow[r]\arrow[u,"\Psi_G","\sim"'] & \widetilde{\operatorname{Gr}}_{>0}(n+1,2n) \arrow[u,"\Phi_{G_+}"',"\sim"]
			\end{tikzcd}.
		\end{equation}	
	\end{proposition}
	\begin{proof}
		Let $(X,\vect) \in \OGdec$ and let $(X,s_+,s_-)= \eta(X,\vect)$. Let $g$ be a face of $G_+$ incident to white vertices $\wv_v, \wv_f$ where $v \in V(G)$ and $f \in F(G)$. Using~\cref{lem:jvjf} and \cref{prop:atob}, we get
		\begin{equation} \label{a2b}
			\Delta_{S(g)}(X,\vect) = \Sigma_{J(v)}(X,s_+,s_-) \Sigma_{J(f)}(X,s_+,s_-).,
		\end{equation}
		which implies that (\ref{ogg}) commutes. 
	\end{proof}

	\section{The electrical right twist}\label{sec:electwistt}
	
	In this section, we define the electrical right twist and prove~\cref{thm:elecrtintro}. 
	\begin{definition}\label{def:q}
		Let $e=uv$ be an edge of $G$ and let $f,g$ denote the faces of $G$ incident to $e$. Following equation (56) in \cite[Section 5.3.1]{GK}, define $q_G: \mathcal B_G \ra \mathcal R_{G}$ by $c(e):=\frac{B_u B_v}{B_f B_g}$. 	
	\end{definition}

	\begin{proposition}\cite[Section 5.3.2]{GK}
	{	If $G$ and $G'$ are related by a Y-$\Delta$ move, then the following diagram commutes}
	\[
	\begin{tikzcd}
		\mathcal B_G \arrow[r,"q_G"] \arrow[d,"\text{Y-$\Delta$ move}"',"\sim"] & \mathcal R_G \arrow[d,"\text{Y-$\Delta$ move}","\sim"']\\
		\mathcal B_{G'} \arrow[r,"q_{G'}"] & \mathcal R_{G'}
	\end{tikzcd}.
	\]
{	Therefore, the maps $q_G$ glue to a map $q: \mathcal B_n \ra \mathcal R_n$.}
	\end{proposition}

	Recall the action (\ref{def:action}) of $\Rpos^{2n}$ on $\Grpos$ by rescaling columns.	
	\begin{definition} \label{def:elecrt}
		Let $(X,s_+,s_-) \in \OGpos$, and let $t_i:=\frac{\Sigma_{J(d_{i-1})}(X,s_+,s_-)}{\Sigma_{J(d_{i})}(X,s_+,s_-)}$ for $i \in [2n]$. The \textit{electrical right twist} of $(X,s_+,s_-)$, denoted $\vec{\tau}_{\rm elec}(X,s_+,s_-)$, is defined to be $ t \cdot \vec \tau(X) \in \Grpos$.
	\end{definition}

	\begin{theorem} \label{thm:elecrt}
		Let $G$ be a reduced graph with $\tau_G=\tau_n$. The image of $\vec \tau_{\rm elec}$ is contained in $\IGpos$, and the following diagrams commute:
		\[
		\begin{tikzcd} \mathcal B_G   \arrow[r,"q_G"] & \mathcal R_{G} \arrow[d,"\sim"',"\Meas_{G_+} \circ j_{G}^+"]\\ \OGpos \arrow[u,"\Psi_G","\sim"']
			\arrow[r,"\vec\tau_{{\rm elec}}"]
			&\IGpos
		\end{tikzcd}, \begin{tikzcd} \mathcal B_n   \arrow[r,"q"] & \mathcal R_{n} \arrow[d,"\sim"',"\Meas \circ j^+"]\\ \OGpos \arrow[u,"\Psi","\sim"']
			\arrow[r,"\vec\tau_{{\rm elec}}"]
			&\IGpos
		\end{tikzcd}.
		\]
	\end{theorem}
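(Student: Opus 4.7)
The strategy is to reduce \cref{thm:elecrt} to the Muller--Speyer twist identity \cref{thm:MSmain} applied to the bipartite graph $G_+$, via the compatibility between $B$ and $A$ variables encoded in diagram (\ref{ogg}). Fix $(X,s_+,s_-) \in \OGpos$ and write $B := \Psi_G(X,s_+,s_-)$, $c := q_G(B)$ and $(X,v) := \eta(X,s_+,s_-)$. Diagram (\ref{ogg}) gives $\Phi_{G_+}(X,v) = i_G^+(B)$, and \cref{thm:MSmain} applied to $\Gamma = G_+$ (with strand permutation $\pi_{n+1,2n}$ and $2n$ boundary vertices) yields
\[
\vec\tau(X) \;=\; \Meas_{G_+}(p_{G_+}(i_G^+(B))) \;\in\; \Grpos.
\]
Combining this with the $\Rpos^{2n}$-equivariance of $\Meas_{G_+}$ (\cref{lem:tequiv}), the first diagram of \cref{thm:elecrt} commutes if and only if
\[
j_G^+(c) \;=\; t \cdot p_{G_+}(i_G^+(B)) \quad \text{in } \mathcal X_{G_+},
\]
and this is what I would prove directly; the theorem then follows from the injectivity of $\Meas_{G_+}$.

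To establish this equality in $\mathcal X_{G_+}$ I would compute the face weight $X_g$ on both sides for every face $g$ of $G_+$. For an interior $4$-cycle face $g$ corresponding to a corner $(v,f)$ of $G$, a direct calculation using the definitions of $q_G$ and $i_G^+$ expresses both face weights as the same biratio of the $B$-values at the four neighbors of $(v,f)$ in $G$; since the $t$-action modifies only boundary edges of $G_+$, this equality persists after twisting. For a boundary face $f_i^-$ the computation is more delicate: the extra factor $A_{f_i^-}$ appearing in $p_{G_+}$ on each boundary edge must be matched against the $t_i$-rescaling of the edges incident to the boundary vertex $d_i$. Exploiting the identifications $A_{f_i^-} = B_{d_{i-1}}B_{d_i}$ (read off from $i_G^+$ since $d_{i-1}$ and $d_i$ are the two white corners of $f_i^-$) and $t_i = B_{d_{i-1}}/B_{d_i}$ (from the definitions of $\vec\tau_{\rm elec}$ and $\Psi_G$), careful bookkeeping around the cyclic boundary of $f_i^-$ should show that the two $X$-coordinates coincide. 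Balancing the Fock--Goncharov boundary correction $A_{f_i^-}$ against the column $t$-twist is the main technical point of the proof.

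Once the first diagram is established, the inclusion $\vec\tau_{\rm elec}(\OGpos) \subseteq \IGpos$ is automatic: the composition $\Meas_{G_+}\circ j_G^+\circ q_G\circ \Psi_G$ factors through $\mathcal R_G$, whose image in $\Grpos$ is precisely $\IGpos$ by the identification of the space of response matrices with $\IGpos$ recalled earlier. The second diagram (over $\mathcal B_n$ and $\mathcal R_n$) then follows from the move-compatibility already established for each of $\Psi_G$, $q_G$, $j_G^+$ and $\Meas_{G_+}$, since $\vec\tau_{\rm elec}$ is defined intrinsically on $\OGpos$ and makes no reference to a particular choice of reduced $G$.
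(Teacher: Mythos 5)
Your reduction is exactly the paper's: both arguments use \cref{thm:MSmain}, diagram (\ref{ogg}) and \cref{lem:tequiv} to reduce the first square to the identity $j_G^+\circ q_G(B)=t\cdot p_{G_+}(i_G^+(B))$ in $\mathcal X_{G_+}$, with the second square then obtained by gluing and the containment in $\IGpos$ following from the factorization through $\mathcal R_G$. The only difference is in how that identity is verified: the paper exhibits an explicit gauge transformation $\tilde g(\bv_e)=\tfrac{1}{B_uB_v}$, $\tilde g(\wv_u)=B_u^2$ and checks the four edge cases (interior versus boundary white vertex), whereas you propose comparing face coordinates $X_g$ --- an equivalent and equally routine bookkeeping, with the same boundary-face subtlety you correctly identify.
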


	\begin{proof}
		We will show commutativity of the left diagram by showing that $\Meas_{G_+}^{-1} \circ \vec \tau_{\rm elec} = j_G^+ \circ q_G \circ \Psi_G$. The right diagram is then obtained by gluing.
		
		Define $B := \Psi_G(X,s_+,s_-)$ {, $A:=i_G^+(B)$, and $t_i:=\frac{B_{d_{i-1}}}{B_{d_{i}}}$. We have
		\begin{align}
\Meas_{G_+}^{-1} \circ \vec \tau_{\rm elec}(X,s_+,s_-) &=	\Meas_{G_+}^{-1}(t \cdot \vec \tau(X)) && (\text{Definition}~\ref{def:elecrt}) \nonumber \\
&=  t \cdot 	\Meas_{G_+}^{-1} \circ \vec \tau(X)&&(\text{Lemma}~\ref{lem:tequiv})\nonumber\\
&= 	 t \cdot p_{G_+} \circ  \Phi_{G_+} \circ \cev \tau \circ \vec \tau (X) &&(\text{Theorem}~\ref{thm:MSmain})\nonumber\\
&=  t \cdot p_{G_+} \circ  \Phi_{G_+} (X)&&(\cev \tau \circ \vec \tau = \text{id})\nonumber\\
&=t \cdot p_{G_+}(A). &&(\text{Proposition}~\ref{prop:ogg}) \label{eq::big}
		\end{align}		
}	
		
Let $e=uv$ be an edge of $G$ and let $f,g$ denote the faces of $G$ incident to $e$. From definitions, if $[\wt]:= j_G^+ \circ q_G (B)$, then
		\[
		\wt({\rm b}_e \wv_{x}) = \begin{cases}
			\frac{B_u B_v}{B_f B_g} &\text{if }x \in \{u,v\} \\
			1&\text{if }x \in \{f,g\}
		\end{cases}.
		\] 
		We define a gauge transformation $\tilde g$ by $\tilde g({\rm b}_e):= \frac{1}{B_{u} B_{v}}$ and {
	\[		\tilde g({\rm w}_u):= \begin{cases}B_u^2&~\text{if ${\rm w}_u$ is an internal white vertex, and}\\
		1&~\text{if ${\rm w}_u$ is a boundary white vertex.}
		\end{cases}
		\]
	}
 We have the following cases for edges ${\rm b }_e{\rm w}_x$ in $G_+$.
		\begin{enumerate}
			\item $x = u$. Let $h$ and $h'$ be the two faces of $G_+$ incident to ${\rm b}_e {\rm w}_u$, where $h$ is between $u$ and $f$ and $h'$ is between $u$ and $g$. 
			\begin{enumerate}
				\item $\wv_u$ is an internal vertex of $G_+$. Then, $\Meas_{G_+}^{-1} \circ \vec \tau_{\rm elec}$ assigns weight $
				\frac{1}{A_h A_{h'}} = \frac{1}{B_{u}^2 B_{f} B_{g}}
				$ to $\bv_e \wv_u$. Applying the gauge transformation $\tilde g$, we get 
				\[
				B_{u} B_{v} \frac{1}{B_{u}^2 B_{f} B_{g}} B_{u}^2  = \frac{B_{u}B_{v}}{B_{f}B_{g}}.
				\]
				\item $\wv_u$ is a boundary vertex $d_{2i-1}$ of $G_+$.  $\Meas_{G_+}^{-1} \circ \vec \tau_{\rm elec}$ assigns weight $	\frac{B_{d_{2i-1}}}{B_{d_{2i-2}}}\frac{A_{f_{2i-1}^-}}{A_h A_{h'}} = \frac{1}{B_{f} B_{g}}
				$ to $\bv_e \wv_u$. Applying the gauge transformation $\tilde g$, we get $\frac{B_{u}B_{v}}{B_{f}B_{g}}$.
			\end{enumerate}

			\item $x = f$. Let $h$ and $h'$ be the two faces of $G_+$ incident to ${\rm b}_e {\rm w}_f$, where $h$ is between $u$ and $f$ and $h'$ is between $f$ and $v$. 		
			
			\begin{enumerate}
				\item If ${\rm w}_f$ is an internal vertex of $G_+$, then $\Meas_{G_+}^{-1} \circ \vec \tau_{\rm elec}$ assigns weight
				$			\frac{1}{A_h A_{h'}} = \frac{1}{B_{u} B_{f}^2  B_{v}}$ to ${\rm b}_e {\rm w}_f$. Applying the gauge transformation $\tilde g$, we get 
				\[
				B_{u} B_{v} \frac{1}{B_{u} B_{f}^2 B_{v}} B_{f}^2  = 1.
				\]
				\item If ${\rm w}_f$ is the boundary vertex $d_{2i}$, then $\Meas_{G_+}^{-1} \circ \vec \tau_{\rm elec}$ assigns weight
				$\frac{B_{d_{2i}}}{ B_{d_{2i-1}}}\frac{A_{f_{2i}^-}}{A_h A_{h'}} = \frac{1}{B_{u}  B_{v}}
				$ to ${\rm b}_e {\rm w}_f$. Applying the gauge transformation $\tilde g$, we get $1$.
			\end{enumerate}
		\end{enumerate}
{	
	Finally, $ \vec \tau_{\rm elec}(X,s_+,s_-) =\Meas_{G_+} \circ j_G^+ \circ q_G \circ \Psi_G(X,s_+,s_-) \in \IGpos$ by Theorem~\ref{lamthmmain}.
}	
	\end{proof}

	\begin{corollary} \label{cor:elecsurj}
		The electrical right twist $\vec \tau_{\rm elec}:\OGpos \ra \IGpos$ is surjective.
	\end{corollary}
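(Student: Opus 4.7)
The plan is to exploit the factorization from \cref{thm:elecrt},
\[
\vec\tau_{\rm elec} = (\Meas_{G_+}\circ j_G^+) \circ q_G \circ \Psi_G,
\]
in which $\Psi_G:\OGpos\to\mathcal B_G$ is a homeomorphism (Henriques--Speyer) and $\Meas_{G_+}\circ j_G^+:\mathcal R_G\to\IGpos$ is a homeomorphism (\cite{BGKT,CGS}). Consequently
\[
\vec\tau_{\rm elec}(\OGpos) = (\Meas_{G_+}\circ j_G^+)\bigl(q_G(\mathcal B_G)\bigr),
\]
so the corollary is equivalent to the surjectivity of the middle map $q_G:\mathcal B_G\to\mathcal R_G$.

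To prove that $q_G$ is surjective, I would pass to log-coordinates, where the formula $c(e) = B_u B_v/(B_f B_g)$ becomes the linear map
\[
\delta:\R^{V(G)\cup F(G)} \to \R^{E(G)},\qquad \delta(b)(e) = b_u+b_v-b_f-b_g
\]
for $e=uv$ bounded by faces $f,g$. Since the logarithm is a bijection between $\Rpos$ and $\R$, surjectivity of $q_G$ is equivalent to surjectivity of $\delta$. Applying Euler's formula to $G$ together with the $n$ boundary arcs of the disk gives $|V(G)|+|F(G)|-|E(G)|=n+1$, so by rank-nullity the surjectivity of $\delta$ reduces to the injectivity of the adjoint
\[
\delta^*:\R^{E(G)}\to\R^{V(G)\cup F(G)},\qquad x\mapsto\Bigl(\bigl(\textstyle\sum_{e\ni v}x(e)\bigr)_{v\in V(G)},\bigl(-\textstyle\sum_{e\subset f}x(e)\bigr)_{f\in F(G)}\Bigr).
\]

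The hard part, and the main obstacle, is the combinatorial injectivity claim for $\delta^*$: if $x:E(G)\to\R$ has vanishing vertex and face sums, then $x\equiv 0$. I would prove this using the medial-strand structure of the well-connected reduced graph $G$. Each edge $e$ of $G$ corresponds to a $4$-valent vertex $v_e$ of $G^\times$ lying on exactly two medial strands; by reducedness every strand has both endpoints on the boundary of the disk, and by well-connectedness ($\tau_G = \tau_n$) the strands pair up antipodal boundary points. The vertex- and face-cycle relations, applied along the vertices and faces that a strand passes, should determine the values of $x$ on all edges crossed by the strand up to a single scalar; the boundary termination then fixes this scalar to zero. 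Iterating over all strands forces $x\equiv 0$. Once the injectivity of $\delta^*$ (and hence the surjectivity of $q_G$) is established, we obtain
\[
\vec\tau_{\rm elec}(\OGpos) = (\Meas_{G_+}\circ j_G^+)(\mathcal R_G) = \IGpos,
\]
completing the proof.
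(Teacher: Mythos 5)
Your reduction is exactly the paper's: by \cref{thm:elecrt}, $\vec\tau_{\rm elec} = (\Meas_{G_+}\circ j_G^+)\circ q_G\circ\Psi_G$ with the outer two maps homeomorphisms, so everything rests on the surjectivity of $q_G$. The paper disposes of that step by citing Kenyon--Wilson (\cite[Proposition 4]{KW}); you instead try to prove it from scratch, and that is where the gap is. Your linearization is fine: passing to log-coordinates, the Euler count $|V(G)|+|F(G)|-|E(G)|=n+1$ is correct for a connected graph in the disk whose $n$ boundary arcs lie on distinct faces, and surjectivity of $\delta$ is indeed equivalent to injectivity of the adjoint $\delta^*$. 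But the injectivity of $\delta^*$ is the entire content of the claim, and the strand-propagation sketch you offer does not establish it. The relation at a vertex $v$ reads $\sum_{e\ni v}x(e)=0$ and involves \emph{all} edges incident to $v$, not just the two consecutive edges that a given medial strand meets at its corner of $v$ (and likewise for faces). Already at a degree-$3$ vertex, knowing $x$ on one incident edge tells you nothing about its value on the next edge along the strand without knowing the third; so there is no ``transfer up to a single scalar'' along a strand, and the argument as described fails at its first step.

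The statement you need is true, but it requires a genuine argument. Two standard routes: (i) simply cite \cite[Proposition 4]{KW}, as the paper does; or (ii) observe that a Y--$\Delta$ move $G\rightsquigarrow G'$ intertwines $q_G$ and $q_{G'}$ through the move-homeomorphisms of $\mathcal B$ and $\mathcal R$, so surjectivity of $q_G$ is a move-invariant, and then verify it by hand on one standard well-connected graph with medial pairing $\tau_n$, where the system $c(e)=B_uB_v/(B_fB_g)$ can be solved explicitly in a triangular fashion. Equivalently, one can show that $\ker\delta$ is exactly the $(n+1)$-dimensional space coming from the action (\ref{eq:Action}); but identifying that kernel is again equivalent to what you are trying to prove, so it does not come for free. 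As written, your proof is incomplete at its decisive step.
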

	\begin{proof}
		\cite[Proposition 4]{KW} shows that $q_G$ is surjective. By \cref{thm:elecrt}, $\vec\tau_{\rm elec}$ is surjective. 
	\end{proof}
	
	\begin{figure}
		\begin{tabular}{ccc}
			\includegraphics[width=0.3\textwidth]{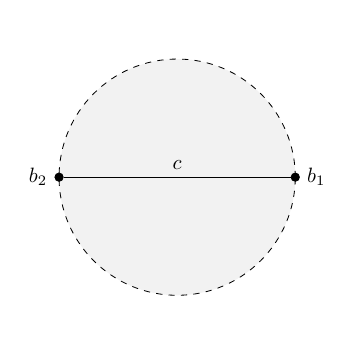} &\includegraphics[width=0.3\textwidth]{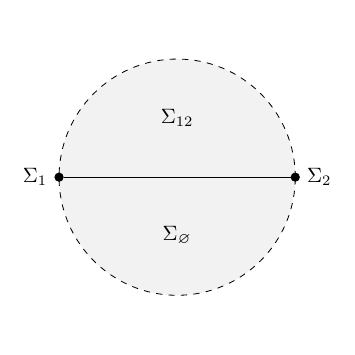}
			&\includegraphics[width=0.3\textwidth]{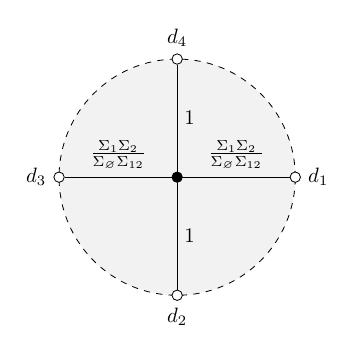}\\
			(a) An electrical network $(G,c)$. &(b) $\Psi_G(X,s_+,s_-)$. &(c) $j_G^+ \circ q_G \circ \Psi_G(X,s_+,s_-)$.
		\end{tabular}
		\caption{\label{fig:facenetwork2} Commutativity of the diagram in~\cref{thm:elecrt} when $n=2$ .}
	\end{figure}

	\begin{example} \label{example:rt2}
		Let $n=2$ and let $(X,s_+,s_-) \in \widetilde{\operatorname{OG}}_{>0}(3,4)$ be such that $(\Sigma_J(X,s_+,s_-))_{J \subseteq [2]}=(\Sigma_J)_{J \subseteq [2]}$. Then, $X$ is the row span of the matrix
		\[
		\begin{bmatrix}
			\Sigma_{\varnothing} \Sigma_1&\Sigma_{\varnothing}\Sigma_{2}&0&0\\
			0& \frac{\Sigma_{12}}{\Sigma_\varnothing}&1&0\\
			0&-\frac{\Sigma_{12} \Sigma_2}{\Sigma_\varnothing \Sigma_1} &0&1
		\end{bmatrix},~\text{so we compute}~\vec \tau_{\rm elec}(X,s_+,s_-)= \begin{bmatrix}
			\frac{\Sigma_{12}}{\Sigma_\varnothing \Sigma_1 \Sigma_2}& \frac{1}{\Sigma_\varnothing^2}&0&0\\
			0&0&\frac{\Sigma_\varnothing}{\Sigma_1}&\frac{\Sigma_2}{\Sigma_{12}}\\
			\frac{\Sigma_\varnothing}{\Sigma_2}&0&0&\frac{\Sigma_1}{\Sigma_{12}}
		\end{bmatrix},
		\]
		whose Pl\"ucker coordinates are 
		\begin{equation} \label{eq:pluckn2}
			\Delta_{123}=\frac{1}{\Sigma_1 \Sigma_2},\Delta_{124}=\frac{1}{\Sigma_\varnothing \Sigma_{12}}, \Delta_{134}=\frac{1}{\Sigma_1 \Sigma_2}, \Delta_{234}=\frac{1}{\Sigma_\varnothing \Sigma_{12}}.
		\end{equation}
		Consider the electrical network in Figure~\ref{fig:facenetwork2}(a). Using Figure~\ref{fig:facenetwork2}(c), we compute
		\[
		\Meas_{G_+} \circ j_G^+ \circ q_G \circ \Psi_G(X,s_+,s_-) = \left[e_{123}+ \frac{\Sigma_1 \Sigma_2}{\Sigma_\varnothing \Sigma_{12}} e_{124}+ e_{134}+\frac{\Sigma_1 \Sigma_2}{\Sigma_\varnothing \Sigma_{12}} e_{234}\right],
		\]
		which agrees with (\ref{eq:pluckn2}) upon multiplying by $\Sigma_1 \Sigma_2$.
	\end{example}
	
	\section{The electrical left twist} \label{sec:elecleft}
	In this section, we define the electrical left twist and prove~\cref{thm:intro2}. By \cref{thm:ms1}, the right twist is a homeomorphism \[\vec \tau : \widetilde{\operatorname{Gr}}_{>0}(n+1,2n)/\Rpos \cong \Grpos \xrightarrow[]{\sim} \Grpos\] whose inverse is the left twist. We look for a similar statement for the electrical right twist. The dimension of $\OGpos$ is $\binom{n+1}{2}+1$ \cite[Lemma 5.7]{HenriquesSpeyer}, whereas the dimension of $\IG$ is $\binom{n}{2}$ (since this is the number of edges in $G$, hence the dimension of $\mathcal R_G$), so 
	\[
	\dim \OGpos - \dim \IGpos = n+1.
	\]
	We will see that there is an action of $\Rpos^{n+1}$ on $\OGpos$ preserving $\vec{\tau}_{{\rm elec}}$.
	
	We define an action of $\Rpos \times \Rpos^n$ on $\mathcal B_{G}$ as follows. For $s \in \Rpos$ and $t = (t_1,\dots,t_n) \in \Rpos^n$, 
	\begin{equation} \label{eq:Action}
		((s,t) \cdot B)_v := s  \left( \frac{ \prod_{i \in J(v)} t_i}{\prod_{i \notin J(v)} t_i}\right)B_v. 
	\end{equation}
Consider also the action of $\Rpos \times \Rpos^n$ on $\OGpos$ defined by:
	\begin{enumerate}
		\item $\Rpos$ acts on $\OGpos$ by $s \cdot (X,s_+,s_-) := (X,s s_+,s s_-)$.
		\item Recall from (\ref{maxtor}) the maximal torus $(\C^\times)^n$ inside $\Spin(Q)$ which has the parameterization $c: (\C^\times)^n \ra \Spin(Q)$. Restricting to $\Rpos^n \subset (\C^\times)^n$, we get a copy of $\Rpos^n$ inside $\Spin(Q)$ parameterized by $c: \Rpos^n \ra \Spin(Q)$. We have the action $t \cdot (X,s_+,s_-) = (X \rho(c(t))^T,c(t)s_+,c(t)s_- )$, where $\rho(c(t)) \in \SO(Q)$ is $\text{diag}(t_1^2,\dots,t_n^2,t_1^{-2},\dots,t_n^{-2})$.
	\end{enumerate}
{
\begin{lemma}\label{tauequivaction}
	The map $\Psi_G:\OGpos \ra \mathcal B_G$ is $\Rpos \times \Rpos^n$ equivariant.
\end{lemma}
\begin{proof}
	This follows from the observation that
	\[
	c_i(t_i) \cdot e_I = \begin{cases}
t_i e_I&~\text{if $i \in I$, and}\\
t_i^{-1} e_I&~\text{if $i \notin I$}.
	\end{cases}
	\]
\end{proof}
}
	
		\begin{figure}[h]
		\begin{tabular}{ccc}
			\includegraphics[width=0.3\textwidth]{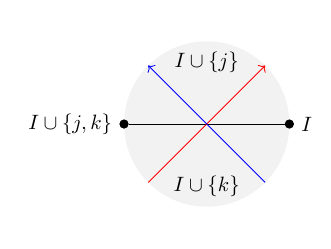} 
		\end{tabular}
		\caption{\label{fig:temperleyzz} {Labeling of the vertices and faces around an edge of $G$. The red and blue strands are $\beta_j$ and $\beta_k$ respectively. The strand $\beta_i, i \in \{j,k\}$, is oriented from $t_{n+i}$ to $t_i$. Locally any of the four choices of orientations of the two strands is possible; however, the other three cases are cyclic rotations of the one shown.
		}}
	\end{figure}

	\begin{lemma}\label{lem:qtequi}
		The map $q_G$ is invariant under the action (\ref{eq:Action}).
	\end{lemma}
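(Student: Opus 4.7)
The plan is to verify invariance of $c(e) = B_u B_v/(B_f B_g)$ on each edge $e = uv$ of $G$, with incident faces $f$ and $g$, separately for the two factors of $\Rpos \times \Rpos^n$. The $\Rpos$ factor scales every $B$-variable uniformly by $s$; since the numerator and denominator of $c(e)$ each contain exactly two such factors, the $s$'s cancel immediately.

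For the $\Rpos^n$ action, set $\varepsilon_x(i) := +1$ if $i \in J(x)$ and $-1$ otherwise, so the $t_i$-content of $((s,t)\cdot B)_x$ is $t_i^{\varepsilon_x(i)}$. After rescaling, $c(e)$ picks up, for each $i \in [n]$, a factor of $t_i^{\varepsilon_u(i) + \varepsilon_v(i) - \varepsilon_f(i) - \varepsilon_g(i)}$, and the task reduces to showing that this exponent vanishes for every $i$ and every edge $e$.

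The key input is the local combinatorics of the strand $\beta_i$ near the vertex $v_e$ of the medial graph $G^\times$, split into two cases. If $\beta_i$ does not pass through $v_e$, then all four faces of $G^\times$ incident to $v_e$ — which correspond precisely to $u, v, f, g$ — lie on the same side of $\beta_i$, so $\varepsilon_u(i) = \varepsilon_v(i) = \varepsilon_f(i) = \varepsilon_g(i)$ and the exponent is zero. If $\beta_i$ does pass through $v_e$, then $\beta_i$ is one of the two medial strands crossing at $v_e$. The crucial geometric observation — which I expect to be the main obstacle, since it requires a careful check of how the four faces of $G^\times$ around $v_e$ are partitioned by a strand running transversely through the crossing — is that in either of the two possible transverse directions of $\beta_i$ at $v_e$, the strand separates $\{u, v\}$ and simultaneously separates $\{f, g\}$, placing exactly one vertex of $e$ and one face incident to $e$ on each of its two sides. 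Granted this, $\varepsilon_u(i) + \varepsilon_v(i) = 0 = \varepsilon_f(i) + \varepsilon_g(i)$, and once again the exponent vanishes.

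Combining the two cases shows every $t_i$-factor is trivial, so $c(e)$ is preserved edge-by-edge and $q_G$ is invariant under the action $(\ref{eq:Action})$, as claimed.
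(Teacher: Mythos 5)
Your proof is correct and takes essentially the same route as the paper: the paper's one-line argument asserts that the four labels $(J(u),J(f),J(v),J(g))$ form a cyclic rotation of $(I, I\cup\{j\}, I\cup\{j,k\}, I\cup\{k\})$, which is exactly the statement that each strand through $v_e$ separates one endpoint and one face of $e$ from the other endpoint and face, while every other strand leaves all four on one side. You simply justify this local fact from the medial-graph geometry rather than quoting it, and the exponent computation is then identical.
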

	\begin{proof}
		Let $e=uv$ be an edge of $G$ with incident faces $f,g$. The map $q_G$ assigns to $e$ the conductance $\frac{B_u B_v}{B_f B_g}$. The four labels $(J(u), J(f),J(v),J(g))$ are some cyclic rotation of $(I, I \cup \{j\}, I \cup\{j,k\}, I \cup \{k\}),$ {(see Figure~\ref{fig:temperleyzz})} so the factors coming from the action of $(s,t)$ in the numerator and denominator cancel.
	\end{proof}
	
	By~\cref{thm:elecrt}, Lemma~\ref{tauequivaction} and~\cref{lem:qtequi}, $q_G$ and $\vec{\tau}_{{\rm elec}}$ descend to the quotients to yield the commuting diagram
	\[
	\begin{tikzcd} \mathcal B_G/\Rpos^{n+1} \arrow[r,"q_G"]    & \mathcal R_{G} \arrow[d,"\Meas_{G_+} \circ j_G^+","\sim"']\\ \OGpos/\Rpos^{n+1} \arrow[u,"\Psi_G","\sim"']
		\arrow[r,"\vec{\tau}_{{\rm elec}}"]
		& \IGpos
	\end{tikzcd},
	\]
	where each of the spaces has dimension $\binom{n}{2}$. We will show in~\cref{thm:elecrtltinv} that the two horizontal maps are also homeomorphisms.

	As in Section~\ref{sec:gtogplus}, let $\alpha_i$ denote the strand in $G_+$ from $d_{n+i-1}^-$ to $d_i^+$. Let $[\wt](\alpha_i)$ denote the alternating product of edge weights along $\alpha_i$, where the weights of edges oriented from black to white in $\alpha_i$ appear in the numerator and the weights of edges oriented from white to black in the denominator.
	
	\begin{lemma} \label{lem:wtaplha}
		If $[\wt] = j_G^+ \circ q_G(B)$ and $X = \Meas_{G_+}([\wt])$, then $[\wt](\alpha_i)=\frac{\Delta_{S(f_{n+i-1}^-)}(X)}{\Delta_{S(f_{n+i}^-)}(X)}=  \frac{B_{d_{i}}B_{d_{n+i}}}{B_{d_{i-1}}B_{d_{n+i-1}}}$.
	\end{lemma}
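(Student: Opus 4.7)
The plan is to prove the two equalities in sequence.

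For the first equality, I will apply the Muller--Speyer identity to rewrite $[\wt]$ in terms of $A$-variables and then telescope the alternating product. By the right-hand diagram of~\cref{thm:MSmain}, the class $[\wt]$ with $\Meas_{G_+}([\wt]) = X$ is realized by the $p_{G_+}$-image of $A_f := \Delta_{S(f)}(\cev\tau(X))$, and~\cref{prop:msinv}\itemref{1} applied to $\cev\tau(X)$ gives $A_{f_j^-} = 1/\Delta_{S(f_j^-)}(X)$. Substituting these edge weights into the alternating product along $\alpha_i$, I will exploit the fact that consecutive edges meeting at a white vertex share their left incident face, while those meeting at a black vertex share their right face; under the alternating signs, the $A$-variables of all internal faces cancel in pairs, leaving only the boundary face factors at the two endpoints. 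An orientation check identifies the surviving ratio as $A_{f_{n+i}^-}/A_{f_{n+i-1}^-}$, which gives the first equality.

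For the second equality, I use the twist structure $X = t \cdot \vec\tau(X')$ from~\cref{thm:elecrt}, where $(X',s_+,s_-) := \Psi_G^{-1}(B)$ and $t_k = B_{d_{k-1}}/B_{d_k}$. Combining~\cref{lem:tequiv} with~\cref{prop:msinv}\itemref{1} yields
\[
\Delta_{S(f_j^-)}(X) = \frac{\prod_{k \in S(f_j^-)} t_k}{\Delta_{S(f_j^-)}(X')}.
\]
The next task is to identify $\Delta_{S(f_j^-)}(X') = B_{d_{j-1}} B_{d_j}$. A direct check shows $\#(J \cap J^\vee) = 1$ when $I = S(f_j^-)$ (with $J, J^\vee$ as in~\eqref{eq:jjdual}), so~\cref{prop:atob} gives $\Delta_{S(f_j^-)}(X') = \Sigma_J \Sigma_{[n] \setminus J^\vee}$, and the identification reduces to the claim $\{J, [n] \setminus J^\vee\} = \{J(d_{j-1}), J(d_j)\}$ --- the boundary-face analogue of~\cref{lem:jvjf}.

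To finish, I observe that the cyclically consecutive sets $S(f_{n+i-1}^-)$ and $S(f_{n+i}^-)$ differ only by replacing $i$ with $n+i-1$, so the ratio of $t$-products over these sets equals $t_{n+i-1}/t_i$; substituting the formula $t_k = B_{d_{k-1}}/B_{d_k}$ and cancelling the $B_{d_{j-1}} B_{d_j}$ denominators yields the claimed expression $\frac{B_{d_i} B_{d_{n+i}}}{B_{d_{i-1}} B_{d_{n+i-1}}}$. The main obstacle is the boundary-face analogue of~\cref{lem:jvjf}: unlike internal 4-gons of $G_+$, boundary faces do not correspond to edges of $G^\times$, so the combinatorial argument must be redone by tracking how the strands $\beta_\ell$ of $G$ pass around the two boundary white vertices $d_{j-1}$ and $d_j$ bounding $f_j^-$ along the disk.
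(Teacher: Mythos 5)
Your handling of the first equality is the same as the paper's: write $[\wt]=p_{G_+}\circ\Phi_{G_+}\circ\cev\tau(X)$ using the right-hand diagram of \cref{thm:MSmain}, telescope the alternating product along $\alpha_i$ down to $A_{f_{n+i}^-}/A_{f_{n+i-1}^-}$, and convert to Pl\"ucker coordinates of $X$ with \cref{prop:msinv}(1). For the second equality you take a genuinely different route. The paper stays on the edge-weight side: it applies the same telescoping to $[\wt']:=p_{G_+}\circ i_G^+(B)$, getting $[\wt'](\alpha_i)=i_G^+(B)_{f_{n+i}^-}/i_G^+(B)_{f_{n+i-1}^-}=B_{d_{n+i}}/B_{d_{n+i-2}}$, and then notes that the column rescaling $t$ relating $[\wt]$ to $[\wt']$ (via \cref{prop:msinv}(2)) only alters the first and last edge weights of the strand, contributing the factor needed to reach $\frac{B_{d_i}B_{d_{n+i}}}{B_{d_{i-1}}B_{d_{n+i-1}}}$. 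You instead stay on the Grassmannian side, evaluating $\Delta_{S(f_j^-)}(X)$ directly from $X=t\cdot\vec\tau(X')$ via \cref{lem:tequiv}, \cref{prop:msinv}(1), and \cref{prop:atob}; your bookkeeping (the sets $S(f_{n+i-1}^-)$ and $S(f_{n+i}^-)$ differ in the single elements $n+i-1$ and $i$, and the $B_{d_{n+i-1}}$ factors cancel) checks out against the stated formula. The one ingredient you must supply, $\Delta_{S(f_j^-)}(X')=B_{d_{j-1}}B_{d_j}$, is exactly \cref{lem:jvjf} applied to the boundary face $f_j^-$, whose incident white vertices are $d_{j-1}$ and $d_j$; you are more cautious here than necessary, since the boundary face corresponds to a boundary stub of $G^\times$ lying on a unique medial strand and the strand-counting in the proof of \cref{lem:jvjf} goes through unchanged --- indeed the paper's own proof silently uses the companion fact $i_G^+(B)_{f_j^-}=B_{d_{j-1}}B_{d_j}$. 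The trade-off is that your argument never re-enters the graph $G_+$ after the first equality, at the cost of this small extra combinatorial verification; both proofs are of comparable length and rigor.
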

	\begin{proof}
		If $A:=\Phi_{G_+} \circ \cev\tau(X)$, then by~\cref{thm:MSmain}, $[\wt]=p_{G_+}(A)$. Let $d_{n+i-1}={\rm w}_1 \xrightarrow[]{e_1} {\rm b}_1 \xrightarrow[]{e_2} {\rm w}_2 \xrightarrow[]{e_3} {\rm b}_2 \xrightarrow[]{e_4} \cdots \xrightarrow[]{e_{2k-2}} {\rm w}_k \xrightarrow[]{e_{2k-1}} {\rm b}_k \xrightarrow[]{e_{2k}} {\rm w}_{k+1}=d_i$ denote the sequence of vertices and edges in $\alpha_i$. For each edge $e_i$, let $g_{i}^-$ (resp. $g_i^+$) denote the face of $G_+$ on the right (resp., left) of $e_i$. Notice that $g_{2j-1}^-=g_{2j}^-$ for $j \in [k]$ and $g_{2j}^+=g_{2j+1}^+$ for $j \in [k-1]$. Moreover, $g_1^+=f_{n+i}^-$ and $g_{2k}^-=f_i^-$. Therefore,
		\begin{align} \label{eq:awt}
			[\wt](\alpha_i) =\left(\frac{A_{g_1^+} A_{g_1^-}}{A_{f^-_{n+i-1}}}\right)\left(\frac{1}{A_{g_2^+} A_{g_2^-}}\right) \left(\frac{A_{g_3^+} A_{g_3^-}}{1}\right)\cdots \left(\frac{A_{f_i^-}}{A_{g_{2k}^+} A_{g_{2k}^-}}\right)=\frac{A_{f_{n+i}^-}}{A_{f_{n+i-1}^-}}.
		\end{align}
		Using~\cref{prop:msinv}(1), we get $[\wt](\alpha_i) = \frac{\Delta_{S(f_{n+i-1}^-)}(X)}{\Delta_{S(f_{n+i}^-)}(X)}$.

		Let $[\wt']:=p_{G_+} \circ i_G^+(B)$. Using (\ref{eq:awt}) for $[\wt']$, we get $[\wt'](\alpha_i)=\frac{i_G^+(B)_{f_{n+i}^-}}{i_G^+(B)_{f_{n+i-1}^-}}=\frac{B_{d_{n+i}}}{B_{d_{n+i-2}}}$. {By (\ref{eq::big}), we have} $[\wt]=t \cdot [\wt']$, where $t \in \Rpos^{2n}$ is given by $t_j = \frac{B_{d_{j-1}}}{B_{d_{j}}}$ for all $j \in [2n]$. Therefore, $\wt(e_1)=t_{n+i-1} \wt'(e_1)$ and $\wt(e_{2k})=t_{i} \wt'(e_{2k})$, so $[\wt](\alpha_i) = \frac{t_i}{t_{n+i-1}}[\wt'](\alpha_i)=\frac{B_{d_{i}}B_{d_{n+i}}}{B_{d_{i-1}}B_{d_{n+i-1}}}$.
		
	\end{proof}

	\begin{lemma} \label{lem:ltog}
		Given $X \in \IGpos$, let $t \in \Rpos^{2n}$ be such that $t_i t_{n+i} =\frac{\Delta_{S(f_{n+i}^-)}(X)}{\Delta_{S(f_{n+i-1}^-)}(X)}$. Then, $t \cdot \cev \tau(X) \in \operatorname{OG}_{>0}(n+1,2n)$.
	\end{lemma}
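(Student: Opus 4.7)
The plan is to use surjectivity of the electrical right twist (\cref{cor:elecsurj}) to reduce the claim to a computation showing that the prescribed scaling $t$ matches the one arising from the preimage, up to the maximal torus of $\Spin(Q)$ which preserves $\operatorname{OG}_{>0}$. Specifically, by \cref{cor:elecsurj} I can choose $(X', s_+, s_-) \in \OGpos$ with $X = \vec\tau_{\rm elec}(X', s_+, s_-)$, which by definition means $X = t' \cdot \vec\tau(X')$ in $\Grpos$ for $t'_i := \Sigma_{J(d_{i-1})}(X', s_+, s_-) / \Sigma_{J(d_{i})}(X', s_+, s_-)$.

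The first step is to invert this relation. Since $\cev\tau$ and $\vec\tau$ are mutually inverse on $\Grpos$ (\cref{thm:ms1}), the equivariance $\vec\tau(t \cdot Y) = t^{-1} \cdot \vec\tau(Y)$ in \cref{prop:msinv}(2) formally implies $\cev\tau(t \cdot Y) = t^{-1} \cdot \cev\tau(Y)$ as well. Applying $\cev\tau$ to $X = t' \cdot \vec\tau(X')$ then yields $t' \cdot \cev\tau(X) = X'$, which lies in $\operatorname{OG}_{>0}(n+1, 2n)$.

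The second step is to compare $t$ and $t'$. Using $B = \Psi_G(X', s_+, s_-)$, so that $B_{d_j} = \Sigma_{J(d_j)}(X', s_+, s_-)$, and applying \cref{lem:wtaplha} to $[\wt] = j_G^+ \circ q_G(B) = \Meas_{G_+}^{-1}(X)$, I would compute
\[
t'_i t'_{n+i} = \frac{B_{d_{i-1}} B_{d_{n+i-1}}}{B_{d_i} B_{d_{n+i}}} = \frac{\Delta_{S(f_{n+i}^-)}(X)}{\Delta_{S(f_{n+i-1}^-)}(X)} = t_i t_{n+i}.
\]
Thus $t/t'$ has the form $(s_1, \ldots, s_n, s_1^{-1}, \ldots, s_n^{-1})$ where $s_i := t_i / t'_i$. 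Rescaling columns by this vector is the action of $\mathrm{diag}(s_1, \ldots, s_n, s_1^{-1}, \ldots, s_n^{-1}) \in \SO(Q)$ --- equivalently, of $c(\sqrt{s}) \in \Spin(Q)$ --- and this maximal torus preserves $\operatorname{OG}_{>0}(n+1, 2n)$ since it acts on each Cartan coordinate by a positive weight monomial in $s$. Hence $t \cdot \cev\tau(X) = (t/t') \cdot X' \in \operatorname{OG}_{>0}(n+1, 2n)$.

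The main conceptual point (rather than a hard obstacle) is recognizing that the $\Rpos^n$-worth of freedom in the hypothesis --- only the products $t_i t_{n+i}$ are constrained --- matches exactly the maximal-torus ambiguity in choosing a preimage under $\vec\tau_{\rm elec}$. Once this is seen, the proof becomes bookkeeping built entirely from previously established equivariance and surjectivity statements, and no new nontrivial identity is needed.
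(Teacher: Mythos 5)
Your proposal is correct and follows essentially the same route as the paper: choose a preimage under $\vec\tau_{\rm elec}$ via \cref{cor:elecsurj}, use \cref{lem:wtaplha} to see that $t$ agrees with the defining rescaling up to a factor $\lambda$ with $\lambda_{n+i}=\lambda_i^{-1}$, undo the right twist with the left twist, and note that this residual torus preserves $\operatorname{OG}_{>0}(n+1,2n)$. Your $s=t/t'$ is exactly the paper's $\lambda$, and your extra remarks (deriving left-twist equivariance formally, and checking positivity via the spin-torus action on Cartan coordinates) only make explicit steps the paper leaves implicit.
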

	
	\begin{proof}
		By~\cref{cor:elecsurj}, there exists $(Y,s_+,s_-) \in \OGpos$ such that $\vec\tau_{\rm elec}(Y,s_+,s_-) = X$. If $B:=\Psi_G(Y,s_+,s_-)$, then by~\cref{lem:wtaplha}, $t_i t_{n+i}=\frac{B_{d_{i-1}}B_{d_{n+i-1}}}{B_{d_{i}}B_{d_{n+i}}}$. Therefore, there exists $\lambda \in \Rpos^{2n}$ such that $t_i=\lambda_i \frac{B_{d_{i-1}}}{B_{d_i}}$ and $\lambda_{i+n} = \frac{1}{\lambda_i}$. Let $\mu \in \Rpos^{2n}$ be given by $\mu_i :=\frac{B_{d_{i-1}}}{B_{d_{i}}}$. By definition, $\vec \tau_{\rm elec} (Y,s_+,s_-) = \mu \cdot \vec \tau(Y)$, so by \cref{prop:msinv}(2) and \cref{thm:MSmain}, we have
		\[
		t \cdot \cev\tau(X) = t \cdot \cev \tau ( \mu \cdot \vec \tau(Y))=t \cdot \mu^{-1} \cdot \cev \tau(\vec \tau(Y)) = \lambda \cdot Y.
		\]
		Since $Y \in \operatorname{OG}_{>0}(n+1,2n)$ and $\lambda$ preserves $Q$, $\lambda \cdot Y \in \operatorname{OG}_{>0}(n+1,2n)$. 
	\end{proof}
	
	\begin{example} \label{example:lefttwist}
		Consider the electrical network $(G,c)$ in Figure \ref{fig:facenetwork2}(a). We compute \[\Meas_{G_+} \circ j_{G}^+(c) 
		= \left[e_{123}+ c e_{124}+ e_{134}+c e_{234}\right] \in \operatorname{IG}^\Omega_{>0}(3,4),
		\] 
		which is $\pl(X)$ for $X=\text{row span}\begin{bmatrix} 
			0&1&0&-1\\
			1&0&0&c\\
			0&0&-1&-c
		\end{bmatrix}.$ We have
		\[
		S(f_1^-)=123, S(f_2^-)=234, S(f_3^-)=134~\text{and}~S(f_4^-)=124,
		\]
		so we need to choose $t \in \Rpos^4$ such that \[
		t_1 t_3 = \frac{\Delta_{134}(X)}{\Delta_{234}(X)}=\frac 1 c~\text{and}~t_2 t_4= \frac{\Delta_{124}(X)}{\Delta_{134}(X)}=c,
		\]
		so $t_1=\frac{1}{c t_3}$ and $t_2=\frac{c}{t_4}$. Then, we compute
		\[
		t \cdot \cev \tau(X)=\text{row span} \begin{bmatrix}
			\frac{1}{t_3}& \frac{c}{t_4}&0&0\\
			\frac{1}{t_3 c} & 0&0& \frac{t_4}{c}\\
			0&-\frac{1}{t_4}& -t_3& 0
		\end{bmatrix}.
		\]
		To check that $t \cdot \cev \tau(X) \in \operatorname{OG}_{>0}(3,4)$, we compute the orthogonal complement $(t \cdot \cev \tau(X))^\perp = \Span(v)$, where $v= \left(\frac{1}{t_3t_4}, \frac{c}{t_4^2},\frac{c t_3}{t_4}, 1 \right)$, and check that $Q(v,v)=\frac{1}{t_3 t_4} \cdot \frac{c t_3}{t_4}-\frac{c}{t_4^2} \cdot 1=0$.
	\end{example}
	\begin{definition} \label{def:elt}
		Given $X \in \IGpos$, let $t \in \Rpos^{2n}$ be such that $t_i t_{n+i} =\frac{\Delta_{S(f_{n+i}^-)}(X)}{\Delta_{S(f_{n+i-1}^-)}(X)}$ and $t_{n+1}=1$, and let $Y:=t \cdot \cev \tau(X)$. By~\cref{lem:ltog}, $Y \in \operatorname{OG}_{>0}(n+1,2n)$. Let $(Y,s_+,s_-)$ be the lift of $Y$ to $\OGpos$ such that $\Sigma_\varnothing(Y,s_+,s_-)=\Sigma_{\{1\}}(Y,s_+,s_-)=1$. The \textit{electrical left twist} $\cev{\tau}_{\rm elec}: \IGpos \ra \OGpos/\Rpos^{n+1}$ is defined as $\cev\tau_{\rm elec}(X):=(Y,s_+,s_-)$. 
	\end{definition}

	\begin{theorem} \label{thm:elecrtltinv}
		The electrical left twist is well-defined in the sense that it is independent of the choice of $t \in \Rpos^{2n}$. The electrical right and left twists are mutually inverse homeomorphisms between $\OGpos/\Rpos^{n+1}$ and $\IGpos$ sitting in the commuting diagram
		\[
		\begin{tikzcd}[
			column sep={6em}
			] \mathcal B_G/\Rpos^{n+1} \arrow[r,"q_G","\sim"']    & \mathcal R_{G} \arrow[d,"\Meas_{G_+} \circ j_G^+","\sim"']\\ \OGpos/\Rpos^{n+1} \arrow[u,"\Psi_G","\sim"']
			\arrow[r,bend right=10,"\sim","{\vec{\tau}_{\rm elec}}"'] 
			& \IGpos \arrow[l,bend right=10,"\sim","{\cev{\tau}_{\rm elec}}"'] 
		\end{tikzcd},
		\]
		gluing which we get
		\[
		\begin{tikzcd}[
			column sep={6em}
			] \mathcal B_n/\Rpos^{n+1} \arrow[r,"q","\sim"']    & \mathcal R_{n} \arrow[d,"\Meas \circ j^+","\sim"']\\ \OGpos/\Rpos^{n+1} \arrow[u,"\Psi","\sim"']
			\arrow[r,bend right=10,"\sim","{\vec{\tau}_{\rm elec}}"'] 
			& \IGpos \arrow[l,bend right=10,"\sim","{\cev{\tau}_{\rm elec}}"'] 
		\end{tikzcd}.
		\]
	\end{theorem}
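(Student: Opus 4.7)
The plan is to first verify that $\cev\tau_{\rm elec}$ is well-defined, then check directly that $\cev\tau_{\rm elec}\circ\vec\tau_{\rm elec}$ is the identity on $\OGpos/\Rpos^{n+1}$, and deduce the reverse composition by combining this with the surjectivity of $\vec\tau_{\rm elec}$ from~\cref{cor:elecsurj}. Commutativity of the two diagrams and the glueing statement will then follow immediately from~\cref{thm:elecrt} and~\cref{lem:qtequi}. For well-definedness: if $t,t'\in\Rpos^{2n}$ are two choices in the definition, then $\lambda:=t'/t\in\Rpos^{2n}$ satisfies $\lambda_i\lambda_{n+i}=1$ and $\lambda_{n+1}=1$, hence also $\lambda_1=1$. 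Defining $\nu:=(1,\sqrt{\lambda_2},\ldots,\sqrt{\lambda_n})\in\Rpos^n$, we have $\mathrm{diag}(\lambda)=\rho(c(\nu))$, so $t'\cdot\cev\tau(X)=\rho(c(\nu))\cdot(t\cdot\cev\tau(X))$. Acting on the chosen lift $(s_+,s_-)$ by $c(\nu)$ and rescaling by the unique $s\in\Rpos$ that restores the normalization $\Sigma_\varnothing=\Sigma_{\{1\}}=1$ exhibits the two candidate lifts as lying in the same $\Rpos^{n+1}$-orbit.

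For $\cev\tau_{\rm elec}\circ\vec\tau_{\rm elec}=\mathrm{id}$: fix $(X,s_+,s_-)\in\OGpos$, put $B:=\Psi_G(X,s_+,s_-)$, and define $\mu\in\Rpos^{2n}$ by $\mu_i:=\Sigma_{J(d_{i-1})}/\Sigma_{J(d_i)}=B_{d_{i-1}}/B_{d_i}$, so that $X':=\vec\tau_{\rm elec}(X,s_+,s_-)=\mu\cdot\vec\tau(X)$. By~\cref{prop:msinv}(2) and~\cref{thm:ms1}, $\cev\tau(X')=\mu^{-1}\cdot X$. Applying~\cref{lem:wtaplha} to the electrical network $q_G(B)$ (whose response matrix corresponds to $X'$ by~\cref{thm:elecrt}) identifies the ratio $\Delta_{S(f_{n+i}^-)}(X')/\Delta_{S(f_{n+i-1}^-)}(X')$ with $\mu_i\mu_{n+i}$. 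Hence choosing $\lambda\in\Rpos^{2n}$ with $\lambda_i\lambda_{n+i}=1$ and $\lambda_{n+1}=\mu_{n+1}^{-1}$, the product $t:=\mu\lambda$ is a valid choice of parameter in the definition of $\cev\tau_{\rm elec}(X')$, and $t\cdot\cev\tau(X')=\lambda\cdot X$. By the argument in the previous paragraph applied to this $\lambda$, we have $\lambda\cdot X=\rho(c(\nu))\cdot X$ for the appropriate $\nu\in\Rpos^n$, so the lift of $\lambda\cdot X$ normalized by $\Sigma_\varnothing=\Sigma_{\{1\}}=1$ coincides with the $\Rpos^{n+1}$-class of $(X,s_+,s_-)$. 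The main obstacle is precisely this bookkeeping: one must verify that the normalization $\lambda_{n+1}=\mu_{n+1}^{-1}$ is consistent with $\lambda_1=\lambda_{n+1}=1$ in the $\Rpos^{n+1}$-quotient (i.e.\ that $\mu_{n+1}$ can be absorbed), and that the single scalar $s$ used to renormalize $(s_+,s_-)$ genuinely restores both $\Sigma_\varnothing$ and $\Sigma_{\{1\}}$ simultaneously.

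The other inversion is then essentially free. By~\cref{cor:elecsurj} every $X\in\IGpos$ is of the form $\vec\tau_{\rm elec}(X_0,s_+^0,s_-^0)$, so the previous paragraph identifies $\cev\tau_{\rm elec}(X)$ with the $\Rpos^{n+1}$-class of $(X_0,s_+^0,s_-^0)$. Since $\vec\tau_{\rm elec}$ factors through $\OGpos/\Rpos^{n+1}$---as~\cref{prop:msinv}(2) combined with the explicit formula for the $\Rpos^{n+1}$-action on the scaling parameters $\Sigma_{J(d_{i-1})}/\Sigma_{J(d_i)}$ shows these ratios change in exactly the way needed to cancel with the twist---we obtain $\vec\tau_{\rm elec}(\cev\tau_{\rm elec}(X))=X$. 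Both maps are continuous (the right twist via~\cref{def:action} and~\cref{prop:msinv}, the left twist via the continuity of $\cev\tau$ and of the normalized lift), so they are mutually inverse homeomorphisms. Commutativity of the square in the first diagram without $\cev\tau_{\rm elec}$ is then~\cref{thm:elecrt} combined with~\cref{lem:qtequi} (which makes $q_G$ descend to the quotient), and the glued diagram over $\mathcal B_n$ and $\mathcal R_n$ follows by assembling the individual diagrams for reduced graphs with $\tau_G=\tau_n$ across Y-$\Delta$ moves.
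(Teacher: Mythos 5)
Your proof follows the paper's argument essentially verbatim: well-definedness via the maximal-torus part of the $\Rpos^{n+1}$-action (writing $\lambda=t'/t$ as $\rho(c(\nu))$ and rescaling by a single $s$), the identity $\cev\tau_{\rm elec}\circ\vec\tau_{\rm elec}=\mathrm{id}$ via \cref{lem:wtaplha} together with the computation $t\cdot\cev\tau(\mu\cdot\vec\tau(Y))=\lambda\cdot Y$, and promotion of the left inverse to a two-sided inverse using surjectivity from \cref{cor:elecsurj}. The only difference is that you are slightly more explicit than the paper about correcting $\mu$ so that $t_{n+1}=1$ before invoking well-definedness, a bookkeeping point the paper passes over by simply taking $t=\mu$.
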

	\begin{proof}
	{	If $t'\in \Rpos^{2n}$ is another choice, define $\lambda \in \Rpos^{2n}$ by $\lambda_i := \frac{t_i'}{t_i}$ for all $i \in [2n]$. Note that $\lambda_1 =\lambda_{n+1}=1$.
		
		 Let $Y':= t' \cdot \cev \tau(X)=\lambda \cdot Y$ and let $(Y',s_+',s_-')$ denote its lift to $\OGpos$ such that $\Sigma_\varnothing(Y',s_+',s_-')=\Sigma_{\{1\}}(Y',s_+',s_-')=1$.
		By Lemma~\ref{tauequivaction},  \[(\sqrt{\lambda_2 \cdots \lambda_n},(1,\sqrt{\lambda_2},\dots,\sqrt{\lambda_n})) \cdot (Y,s_+,s_-) = (Y',s_+',s_-'),\]
		so $(Y,s_+,s_-)$ and $(Y',s_+',s_-')$ are in the same $\Rpos^{n+1}$ orbit. Therefore, $\cev\tau_{\rm elec}(X)$ is well-defined.
		
		Given $(Y,s_+,s_-) \in \OGpos$, we can use the action of $\Rpos^{n+1}$ to make $\Sigma_\varnothing(Y,s_+,s_-)=\Sigma_{\{1\}}(Y,s_+,s_-)=1$; indeed, we act by
		\[
	\left(\frac{1}{\sqrt{\Sigma_\varnothing(Y,s_+,s_-) \Sigma_{\{1\}}(Y,s_+,s_-)}}, \left(\sqrt{\frac{\Sigma_\varnothing(Y,s_+,s_-)}{\Sigma_{\{1\}}(Y,s_+,s_-)}},1,\dots,1\right)\right) \in \Rpos^{n+1}.	
		\]
				 If we choose $t_i := \frac{\Sigma_{J(d_{i-1})}(Y,s_+,s_-)}{\Sigma_{J(d_i)}(Y,s_+,s_-)}$ to define the electrical left twist, then $\cev\tau_{\rm elec} \circ \vec\tau_{\rm elec}(Y,s_+,s_-) = (Y,s_+,s_-)$, so $\vec\tau_{\rm elec}$ is injective with left inverse $\cev \tau_{\rm elec}$. By~\cref{cor:elecsurj},   $\vec\tau_{\rm elec}$ is also surjective, so $\cev \tau_{\rm elec}$ is the two-sided inverse.
	}
	\end{proof}
	
	\begin{example}
		Recall~\cref{example:lefttwist} and set $t_3=1$. Using row operations, we can write \[
		Y:=t \cdot \cev \tau(X)=\text{row span}\begin{bmatrix}
			1 & \frac{ c}{t_4}&0&0\\
			0& \frac{1}{ t_4} &1&0\\
			0&-\frac{c}{t_4^2}&0&1
		\end{bmatrix}.\]
		Letting $\Sigma_\varnothing(Y,s_+,s_-) =\Sigma_1(Y,s_+,s_-)=1$ and comparing with the matrix in~\cref{example:rt2}, we see that $\Sigma_{2}(Y,s_+,s_-)=\frac{ c}{t_4}$ and $\Sigma_{12}(Y,s_+,s_-)=\frac{1}{ t_4}$. Therefore, $q_G \circ \Psi_G (Y,s_+,s_-)$ assigns to the edge the conductance
		\[
		\frac{\Sigma_1(Y,s_+,s_-) \Sigma_2(Y,s_+,s_-)}{\Sigma_\varnothing(Y,s_+,s_-) \Sigma_{12}(Y,s_+,s_-)}=\frac{1 \cdot \frac{c}{t_4}} {1\cdot \frac{1}{t_4}}=c,
		\]
		verifying commutativity of the diagram in~\cref{thm:elecrtltinv}.
	\end{example}
	
	\section{An example of the inverse map} \label{sec:recon}
	
	In this section, we work out in detail the inverse map when $n=3$. For background on electrical networks, the Laplacian and the response matrix, see \cite{kensurvey}. Let $(G,c)$ denote the electrical network in Figure~\ref{fig:facenetwork3}(a). The Laplacian is
	\[
	\Delta=\begin{blockarray}{ccccc}
		b_1 & b_2 & b_3 & u \\
		\begin{block}{[cccc]c}
			a & 0 & 0 & -a &b_1 \\
			0 & b & 0 & -b &b_2 \\
			0 & 0 & c & -c &b_3\\
			-a & -b & -c & a+b+c &u\\
		\end{block}
	\end{blockarray},
	\]	
	from which the response matrix is obtained as the Schur complement
	\begin{equation}\label{eq:respnet3}
		L=-\begin{bmatrix}
			a & 0 & 0  \\
			0 & b & 0   \\
			0 & 0 & c  
		\end{bmatrix}+\begin{bmatrix}
			-a\\-b\\-c
		\end{bmatrix} \begin{bmatrix}
			a+b+c
		\end{bmatrix}^{-1} \begin{bmatrix}
			-a & -b & -c
		\end{bmatrix}=\begin{bmatrix}
			-\frac{a (b+c)}{a+b+c} & \frac{a b}{a+b+c} & \frac{a c}{a+b+c} \\
			\frac{a b}{a+b+c} & -\frac{b (a+c)}{a+b+c} & \frac{b c}{a+b+c} \\
			\frac{a c}{a+b+c} & \frac{b c}{a+b+c} & -\frac{c (a+b)}{a+b+c} \\
		\end{bmatrix}.
	\end{equation}
	
	By~\cite[Theorem 1.8]{CGS}, the point $X:=\pl^{-1} \circ \Meas_{G_+} \circ j_G^+(c) \in \operatorname{IG}^\Omega_{>0}(4,6)$ is 
	\[ 
	\text{row span}\begin{bmatrix}
		0&1&0&-1&0&1 \\
		1&0&0&L_{12}&0&-L_{12}-L_{13} \\
		0&0&-1&-L_{12}-L_{23}&0&L_{12} \\
		0&0&0&L_{23}&1&L_{13} \\
	\end{bmatrix}. 
	\]
	Using the face labels that have been computed in Figure~\ref{fig:network3labels}(b), to define the electrical left twist, we need to choose $t \in \Rpos^6$ such that
	\[
	t_1 t_4 = \frac{\Delta_{1456}(X)}{\Delta_{3456}(X)}=\frac{L_{23}}{L_{13}}, t_2 t_5 = \frac{\Delta_{1256}(X)}{\Delta_{1456}(X)}=\frac{L_{12}}{L_{23}}, t_3 t_6 = \frac{\Delta_{1236}(X)}{\Delta_{1256}(X)}=\frac{L_{13}}{L_{12}}~\text{and}~t_4=1, 
	\]
	so let us take $t_1 = \frac{L_{23}}{L_{13}}, t_2 = {L_{12}}, t_3={L_{13}}, t_4=1, t_5=\frac{1}{ L_{23}}$ and $t_6=\frac{1}{L_{12}}$. We compute
	\[
	Y:=	t \cdot \cev\tau(X)=\text{row span}\begin{bmatrix}\frac{L_{12} L_{13}+L_{12} L_{23}+L_{13} L_{23}}{L_{13}} & L_{12} & 0 & 0 & 0 & -\frac{1}{L_{13}} \\
		\frac{L_{23}}{L_{13}} & 0 & 0 & 0 & -\frac{1}{L_{12}} & -\frac{1}{L_{12} L_{13}} \\
		-1 & -1 & -L_{13} & 0 & 0 & 0 \\
		0 & 0 & L_{12} & \frac{1}{L_{23}} & \frac{1}{L_{23}} & 0 \\
	\end{bmatrix}.
	\]
	The skew symmetric matrices $M_+$ and $M_-$ as in~\cref{sec:pfformulas} are
	\begin{align*}
		M_+ &= \begin{bmatrix}
			0 & L_{23} & L_{12} L_{13}+L_{12} L_{23}+L_{13} L_{23} \\
			-L_{23} & 0 & L_{12} L_{13} \\
			-(L_{12} L_{13}+L_{12} L_{23}+L_{13} L_{23}) & -L_{12} L_{13} & 0 \\
		\end{bmatrix}~\text{and}\\M_-&=\begin{bmatrix}
			0 & 1 & L_{13} \\
			-1 & 0 & -L_{12} L_{23} \\
			-L_{13} & L_{12} L_{23} & 0 \\
		\end{bmatrix}.
	\end{align*} 	
	
	\begin{figure}
		
		\includegraphics[width=0.6\textwidth]{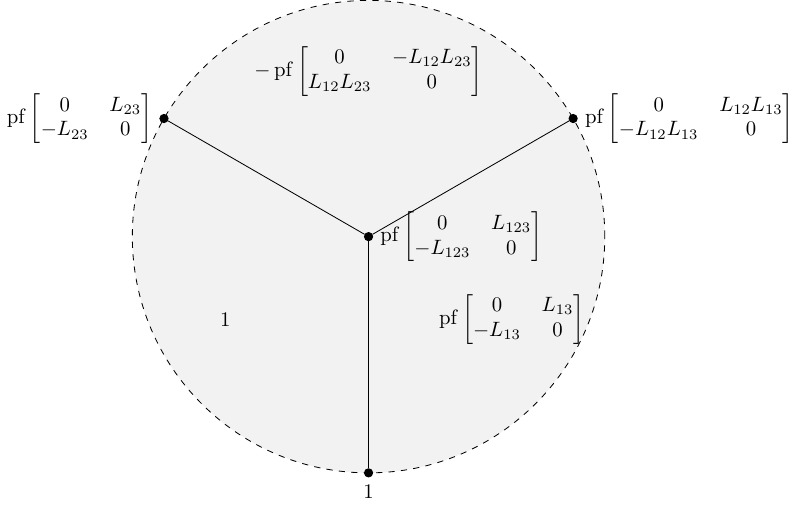}
		
		\caption{\label{fig:pfnet3} $\Psi_G\circ \cev \tau_{\rm elec}(X)$, where $L_{123}:= L_{12} L_{13}+L_{12} L_{23}+L_{13} L_{23}$.}
	\end{figure}

	Using the labels in Figure~\ref{fig:network3labels}(a) and~\cref{prop:pfaffian}, we get that $\Psi_G\circ \cev \tau_{\rm elec}(X)$ is as shown in Figure~\ref{fig:pfnet3}, so $q_G \circ \Psi_G\circ \cev \tau_{\rm elec}(X)$ is given by
	\begin{align*}
		c(u b_1)&=-\frac{\pf \begin{bmatrix}
				0&L_{12} L_{13}+L_{12} L_{23}+L_{13} L_{23}\\
				-(L_{12} L_{13}+L_{12} L_{23}+L_{13} L_{23})&0	
			\end{bmatrix}	\pf \begin{bmatrix}
				0&L_{12} L_{13}\\
				-L_{12} L_{13}&0	
			\end{bmatrix}
		}{\pf \begin{bmatrix}
				0&L_{13}\\
				-L_{13}&0
			\end{bmatrix} \pf \begin{bmatrix}
				0&-L_{12}L_{23}\\
				L_{12}L_{23}&0
		\end{bmatrix}}\\
		&= \frac{L_{12} L_{13}+L_{12} L_{23}+L_{13} L_{23}}{L_{23}},\\
		c({ub_2})&=\frac{\pf \begin{bmatrix}
				0&L_{12} L_{13}+L_{12} L_{23}+L_{13} L_{23}\\
				-(L_{12} L_{13}+L_{12} L_{23}+L_{13} L_{23})&0	
		\end{bmatrix}}{\pf \begin{bmatrix}
				0&L_{13}\\-L_{13}&0
		\end{bmatrix}}\\
		&=\frac{L_{12} L_{13}+L_{12} L_{23}+L_{13} L_{23}}{L_{13}},\\
		c(ub_3)&=-\frac{\pf \begin{bmatrix}
				0&L_{12} L_{13}+L_{12} L_{23}+L_{13} L_{23}\\
				-(L_{12} L_{13}+L_{12} L_{23}+L_{13} L_{23})&0	
			\end{bmatrix}\pf \begin{bmatrix}
				0&L_{23}\\-L_{23}&0
		\end{bmatrix}}{\pf \begin{bmatrix}
				0&-L_{12}L_{23}\\L_{12}L_{23}&0
		\end{bmatrix}}\\
		&=\frac{L_{12} L_{13}+L_{12} L_{23}+L_{13} L_{23}}{L_{12}}.
	\end{align*}	 
	From (\ref{eq:respnet3}), we have
	$
	L_{12}=\frac{ab}{a+b+c}, L_{13}=\frac{ac}{a+b+c},L_{23}=\frac{bc}{a+b+c}
	$, so $L_{12} L_{13}+L_{12} L_{23}+L_{13} L_{23}= \frac{abc}{a+b+c}$. Plugging in these formulas, we get $c(ub_1)=a, c(ub_2)=b,c(ub_3)=c$.
	
\appendix

{
	\section{Notation}
	In the appendix, we collect some of the notation for the spaces and maps used in the paper and the main commutative diagrams in which they sit. The third column of an entry indicates where it first appears.
\subsection{Grassmannians}
	\begin{center}
	\begin{tabular}{  m{0.3\textwidth}  m{0.6\textwidth} m{0.1\textwidth}  } 
$\Grkn$ & Grassmannian  & Section~~\ref{sec:grassmann}\\
$\Grkndec$ & decorated Grassmannian  & Section~~\ref{sec:grassmann}\\
$\Grknpos$ & positive Grassmannian&Section~~\ref{sec:grassmann}\\
$\Grkndecpos$ & positive decorated Grassmannian&Section~~\ref{sec:grassmann}\\
$\IGpos$ & positive Lagrangian Grassmannian &  Section~\ref{sec:gtogplus}\\
$\operatorname{OG}_{>0}(n+1,2n)$ & positive orthogonal Grassmannian&
Definition~\ref{def:og}\\
$\OGpos$ & positive decorated orthogonal Grassmannian&
Definition~\ref{def:og}\\
$\Delta_I$ & Pl\"ucker coordinate & Section~~\ref{sec:grassmann}\\
$\Sigma_J$ & Cartan coordinate & Section~\ref{sec:og}

	\end{tabular}
\end{center}
	
\subsection{Bipartite graphs}

	\begin{center}
		\begin{tabular}{  m{0.3\textwidth}  m{0.6\textwidth} m{0.1\textwidth}  } 
	$\mathcal X_\Gamma$ & space of edge weights modulo gauge on $\Gamma$ & Section~\ref{sec:dimerboundary}\\
	$\Meas_\Gamma:\mathcal X_\Gamma \rightarrow \Grknpos$ & boundary measurement map & Section~\ref{sec:dimerboundary}\\
	$\mathcal A_\Gamma$ & space parameterized by $A$ variables & Section~\ref{sec:Avariables}\\
	$\Phi_\Gamma:\Grkndecpos \ra \mathcal A_\Gamma$ & Scott's map & Section~\ref{sec:Avariables}\\
	$p_\Gamma:\mathcal A_\Gamma \ra \mathcal X_\Gamma$ & canonical map of cluster varieties & Definition~\ref{def:atox}\\
	$\vec \tau$ and $\cev \tau$ & right and left twists & Definition~\ref{def:twist}\\
	& $ 
		\begin{tikzcd} \mathcal A_\Gamma/\Rpos   \arrow[r,"p_\Gamma","\sim"'] & \mathcal X_\Gamma \arrow[d,"\sim"',"\Meas_\Gamma"]\\ \Grknpos \arrow[u,"\sim"',"\Phi_\Gamma"]\arrow[r,bend right=10,"{\vec{\tau}}"',"\sim"]&\Grknpos\arrow[l,bend right=10,"\sim","{\cev{\tau}}"'] 
		\end{tikzcd} 
	$ & Theorem~\ref{thm:MSmain}
		\end{tabular}
	\end{center}
}

{
\subsection{Electrical networks}

	\begin{center}
	\begin{tabular}{  m{0.3\textwidth}  m{0.6\textwidth} m{0.1\textwidth}  } 
		$\mathcal R_G$ & space of conductances on $G$ & Section~\ref{sec:gtogplus}\\
		$G_+$ & weighted bipartite graph associated to $G$& Section~\ref{sec:gtogplus}\\
		$j_G^+: \mathcal R_G \hookrightarrow \mathcal X_{G_+}$ & generalized Temperley's bijection  & Section~\ref{sec:gtogplus}\\
		$\mathcal B_G$ & space parameterized by $B$ variables & Section~\ref{sec:bvar}\\
		$i_G^+:\mathcal B_{G} \hookrightarrow \mathcal A_{G_+}$ & $A$ variable $=$ product of two $B$ variables &Definition~\ref{def:atob}\\
		$\Psi_G: \OGdec \ra \mathcal B_G$ &Henriques and Speyer's map &Section~\ref{sec:bvar}\\
		$q_G: \mathcal B_G \ra \mathcal R_G$ & canonical map from $B$ variables and conductances & Definition~\ref{def:q}\\
		$\vec{\tau}_{\rm elec}$ & electrical right twist &Definition~\ref{def:elecrt}\\
	${\cev{\tau}_{\rm elec}}$ &electrical left twist &Definition~\ref{def:elt}\\
	& $\begin{tikzcd} \mathcal R_G  \arrow[r,hook,"j_{G}^+"]\arrow[d,"\sim","\Meas_{G_+} \circ j_G^+"'] & \mathcal X_{G_+} \arrow[d,"\sim"',"\Meas_{G_+}"]\\ \IGpos \arrow[r,hook] &\Grpos
	\end{tikzcd} 	$& Section~\ref{sec:gtogplus} \\
& $	\begin{tikzcd} 
	\mathcal B_{G} \arrow[r,"i_{G}^+"] &\mathcal A_{G_+}\\
	\OGpos \arrow[r]\arrow[u,"\Psi_G","\sim"'] & \widetilde{\operatorname{Gr}}_{>0}(n+1,2n) \arrow[u,"\Phi_{G_+}"',"\sim"]
\end{tikzcd}$ & Proposition~\ref{prop:ogg}\\
& $\begin{tikzcd} \mathcal B_G   \arrow[r,"q_G"] & \mathcal R_{G} \arrow[d,"\sim"',"\Meas_{G_+} \circ j_{G}^+"]\\ \OGpos \arrow[u,"\Psi_G","\sim"']
	\arrow[r,"\vec\tau_{{\rm elec}}"]
	&\IGpos
\end{tikzcd} $ &  Section~\ref{sec:elecleft}\\
& $\begin{tikzcd}[
	column sep={6em}
	] \mathcal B_G/\Rpos^{n+1} \arrow[r,"q_G","\sim"']    & \mathcal R_{G} \arrow[d,"\Meas_{G_+} \circ j_G^+","\sim"']\\ \OGpos/\Rpos^{n+1} \arrow[u,"\Psi_G","\sim"']
	\arrow[r,bend right=10,"\sim","{\vec{\tau}_{\rm elec}}"'] 
	& \IGpos \arrow[l,bend right=10,"\sim","{\cev{\tau}_{\rm elec}}"'] 
\end{tikzcd}$ & Theorem~\ref{thm:elecrtltinv}
	\end{tabular}
\end{center}
}

	\bibliographystyle{alpha}
	\bibliography{biblio}
\end{document}